\documentclass[a4paper]{article}
\usepackage{hyperref}
\usepackage{amsmath}
\usepackage{amsfonts}
\usepackage{amsthm}
\usepackage{indentfirst}
\usepackage{geometry}

\newtheorem{theorem}{Theorem}[section]
\newtheorem{lemma}[theorem]{Lemma}

\newtheorem{prop}{Proposition}[section]

\newtheorem{remark}{Remark}[section]

\renewcommand{\theequation}{\thesection.\arabic{equation}}
\catcode`@=11
\@addtoreset{equation}{section}
\catcode`@=12

\begin{document}
\begin{center}
{\LARGE Reversed inequality of the Herbst-type and the related Euler-Lagrange system}
\end{center}

\vskip 5mm
\begin{center}

{\sc Tiantian Zhou  \quad Yutian Lei}

\end{center}

\vskip 5mm

\begin{abstract}
In 2008, Beckner (Proc. Amer. Math. Soc. 136(5), 1871-1885)
proved two inequalities of the Herbst type, which
are the critical forms of the Stein-Weiss inequality.
In 2018, Chen et al. (Tran. Amer. Math. Soc. 370(12), 8429-8450)
established the reversed Stein-Weiss inequality.
In this paper, we are concerned about its critical case and
give a reversed Herbst inequality. Namely,
	$$
	\left|\int_{\mathbb{R}^n}\int_{\mathbb{R}^n}|x-y|^{\alpha/q'-n}|y|^{\alpha/q'}g(x)h(y)dxdy\right|
	\geq C_{n,\alpha,p,q'}\|g\|_{L^{q'}(\mathbb{R}^n)}\|h\|_{L^p(\mathbb{R}^n)}
	$$
holds for any nonnegative functions $g \in L^{q'}(\mathbb{R}^n)$ and $h \in L^p(\mathbb{R}^n)$, where
	$n\geq 1$, $p, q' \in (0,1)$, $\alpha>n$ satisfying ${1}/{p}+{1}/{q'}-{2\alpha}/(q'n)=1$.
Such an inequality is not covered by the reversed Stein-Weiss inequality.
Meanwhile, we prove the existence of extremal functions of this inequality.
Finally, we study the Euler-Lagrange system satisfied by those extremal functions
	$$
		\left\{\begin{matrix}
		u(x)=\int_{\mathbb{R}^n}|x-y|^{\beta-n}v^{-p_2}(y)|y|^{\beta}dy,	\\
		v(x)=\int_{\mathbb{R}^n}|x-y|^{\beta-n}u^{-p_1}(y)|x|^{\beta}dy.
	\end{matrix}\right.
	$$
	We obtain necessary conditions for the existence of positive solutions,
and investigate their integrability and asymptotic behavior when $|x| \to 0$ and $|x| \to \infty$.
\end{abstract}

\noindent{\bf{Keywords}}: reversed Herbst inequality, extremal function,
Euler-Lagrange system
\par
\noindent{\bf{MSC2020}}: 26D15, 45G15, 45M20, 46N20, 49J21, 45M05

\renewcommand{\theequation}{\thesection.\arabic{equation}}
\catcode`@=11
\@addtoreset{equation}{section}
\catcode`@=12

	\renewcommand{\theequation}{\thesection.\arabic{equation}}
	\catcode`@=11
	\@addtoreset{equation}{section}
	\catcode`@=12

	\section{Introduction}
	
	The well-known Hardy-Littlewood-Sobolev inequality states that (cf. Theorem 1 in Chapter 5 of \cite{Stein})
    \begin{equation}\label{HLS}
		\left|\int_{\mathbb{R}^n}\int_{\mathbb{R}^n} \frac{f(x)g(y)dxdy}
	{|x-y|^\lambda}\right|
	\leq C_{n,\lambda,p}\|f\|_{L^p(\mathbb{R}^n)}\|g\|_{L^q(\mathbb{R}^n)}.
   \end{equation}
    for all $(f,g) \in L^p(\mathbb{R}^n) \times L^q(\mathbb{R}^n)$.
    Here $0<\lambda<n$, $\min\{p,q\}>1$,
\begin{equation}\label{sbv1}
1/p+1/q+\lambda/n=2,
\end{equation}
and $C_{n,\lambda,p}$ is the best constant. In 1983,
Lieb \cite{LEH} proved the existence of extremal functions of \eqref{HLS}.
Meanwhile, he obtained the best constant
	$$
	C_{n,\lambda,p}= \pi^{\frac{\lambda}{2}}\frac{\Gamma(\frac{n-\lambda}{2})}{\Gamma(\frac{2n-\lambda}{2})}
\{\frac{\Gamma(\frac{n}{2})}{\Gamma(n)}\}^{-\frac{n-\lambda}{n}}
	$$
	of \eqref{HLS} in the special case of $p=q=2n/(2n-\lambda)$, and in this case,	
\begin{equation}\label{exl}
f(x)=g(x)=c\left(\frac{a}{d+|x-\tilde{x}|^2}\right)^{\lambda/2},
\end{equation}
	with $a, d, c>0$ and $\tilde{x}\in \mathbb{R}^n$. These results show the sharp
upper bound of the Coulomb energy $\|(|x|^{-\lambda} \ast f)f\|_{L^1(\mathbb{R}^n)}$.
	
Let $\bar{g}$ and $\bar{f}$ be the rescaling transformation of $g$ and $f$ respectively.
Clearly, $(\bar{f},\bar{g}) \in L^{p}(\mathbb{R}^n) \times L^{q}(\mathbb{R}^n)$.
Now, \eqref{sbv1} ensures that $\bar{g}$ and $\bar{f}$ still satisfy \eqref{HLS}.
The condition that ensures the invariance of an inequality under the rescaling
transformation is called the {\it Sobolev-type condition}. Clearly, \eqref{sbv1} is the
Sobolev-type condition of \eqref{HLS}.

	In 1958, Stein and Weiss \cite{SW} proved the weighted Hardy-Littlewood-Sobolev inequality
	\begin{equation}\label{888}
		\left|\int_{\mathbb{R}^n}\int_{\mathbb{R}^n}
		\frac{f(x)g(y)}{|x|^{\alpha}|x-y|^{\lambda}|y|^{\beta}}dxdy\right|
		\leq C_{n,\alpha,\beta,p,q'}\|f\|_{L^{p}(\mathbb{R}^n)}\|g\|_{L^q(\mathbb{R}^n)}
	\end{equation}
	for all $(f,g)\in L^{p}(\mathbb{R}^n)\times L^q(\mathbb{R}^n)$,
	where $0<\lambda<n$, $\alpha+\beta \geq 0$, $\min\{p,q\}>1$, $1-1/p-\lambda/n<\alpha/n<1-1/p$,
and the Sobolev-type condition
\begin{equation}\label{sbv2}
1/q+1/p+(\alpha+\beta+\lambda)/n=2.
\end{equation}
Clearly, \eqref{sbv2} is the Sobolev-type condition of \eqref{888}.	
Lieb \cite{LEH} maximized the functional
	$$
	J(f,g)=\int_{\mathbb{R}^n}\int_{\mathbb{R}^n}\frac{f(x)g(y)}{|x|^{\alpha}|x-y|^{\lambda}|y|^{\beta}}dxdy
	$$
	under the constraint $\|f\|_{L^{p}(\mathbb{R}^n)}=\|g\|_{L^q(\mathbb{R}^n)}=1$
to show that the best constant $C_{n,\alpha,\beta,p,q'}$ in \eqref{888} is achievable.
The corresponding Euler-Lagrange system of $J(f,g)$ is
	\begin{equation}\label{889}
		\left \{
		\begin{array}{lll}
			u(x)=\displaystyle\int_{\mathbb{R}^n}\frac{v^{s}(y)dy}
			{|x|^{\alpha}|x-y|^{\lambda }|y|^{\beta}},   \\
			v(x)=\displaystyle\int_{\mathbb{R}^n}\frac{u^{r}(y)dy}
			{|x|^{\beta}|x-y|^{\lambda }|y|^{\alpha}},
		\end{array}
		\right.
	\end{equation}
	where
	$$
    \begin{cases}
    	\min\{r,s\}>0,\  0<\overline{\lambda}:=\alpha+\beta+\lambda<n,\  \min\{\alpha,\beta\}\geq 0,\\
    	\frac{\alpha}{n}<\frac{1}{r+1}<\frac{\lambda+\alpha}{n},\ \frac{\beta}{n}<\frac{1}{s+1}<\frac{\lambda+\beta}{n},\  \frac{1}{r+1}+\frac{1}{s+1}=\frac{\overline{\lambda}}{n}.
    \end{cases}
	$$

Although it is difficult to write explicit expressions of extremal functions of \eqref{888} as in \eqref{exl},
we can investigate the qualitative properties (such as the radial symmetry, the integrability,
and the asymptotic behavior when $|x| \to
0$ and $|x| \to \infty$) of solutions of \eqref{889} to show the geometric shapes of those extremal functions.
The conclusions of radial symmetry can be found in \cite{CLO-CPDE,CLO,JL,L}, and the
conclusions of integrability can be found in \cite{CJLL,CLO-DCDS,JL2,LL2}. Based on these results,
paper \cite{LLM} shows the asymptotic behavior when $|x| \to 0$ and $|x| \to \infty$.

	In 2008, Beckner proved the following inequalities (cf. Theorem 1 in \cite{BW})
which were used earlier by Herbst in the study of the Klein-Gordon equation for a Coulomb potential
(cf. Theorem 2.5 in \cite{H}).
	
\textit{Let $0<\alpha<n$, $1<q<\infty$ and $1/q+1/q'=1$. Then we have}
		\begin{equation}\label{BT2}
			\||x|^{-\alpha/q'}(|x|^{-(n-\alpha/q')}\ast g)\|_{L^{q'}(\mathbb{R}^n)}
			\leq C_{\alpha,q'}\|g\|_{L^{q'}(\mathbb{R}^n)},
		\end{equation}		
		\begin{equation}\label{BT1}
			\left \| |x|^{-(n-\alpha/q')} \ast (|x|^{-\alpha/q'}h) \right \|_{L^q(\mathbb{R}^n)} \leq C_{\alpha, q'}\left \| h \right \|_{L^q(\mathbb{R}^n)},
		\end{equation}
{\it for all $(g, h) \in L^{q'}(\mathbb{R}^n) \times L^q(\mathbb{R}^n)$.
		Here}
		$$
		C_{\alpha, q'}=\pi^{n/2}\left[\frac{\Gamma (\frac{\alpha}{2q'})\Gamma(\frac{n-\alpha}{2q'}) \Gamma (\frac{n}{2q})}{\Gamma (\frac{n}{2}-\frac{\alpha}{2q'})\Gamma (\frac{n}{2q}+\frac{\alpha}{2q'})\Gamma (\frac{n}{2q'})}\right].
		$$
We only consider \eqref{BT1} because \eqref{BT2} is equivalent to \eqref{BT1} by duality.	
	
	By the H\"{o}lder inequality and \eqref{BT1}, we have
	\begin{equation}\label{BI2}
			\int_{\mathbb{R}^n}\int_{\mathbb{R}^n}\frac{|g(x)||h(y)|}{|x-y|^{n-\alpha/q'}|y|^{\alpha/q'}}dydx
			\leq C \left \| h \right \|_{L^q(\mathbb{R}^n)}\left \| g \right \|_{L^{q'}(\mathbb{R}^n)}
			\end{equation}
for all $(g, h) \in L^{q'}(\mathbb{R}^n) \times L^q(\mathbb{R}^n)$.
Now, the Sobolev-type condition is
\begin{equation}\label{sbv5}
\frac{1}{q}+\frac{1}{q'}=1.
\end{equation}	
On the contrary, if setting $Th(x)=|x|^{-(n-\alpha/q')}\ast (|x|^{-\alpha/q'}h)$,
we can deduce \eqref{BT1} from \eqref{BI2} and the definition of the norm of operator $T$.

Here, \eqref{BI2} is called the {\it Herbst inequality}.

The form of \eqref{BI2} seems to be a special case of the Stein-Weiss
inequality \eqref{888}. In fact, \eqref{BI2} cannot be covered by \eqref{888} because $\alpha\beta=0$
may not hold when we note \eqref{sbv2} and $1-1/p-\lambda/n<\alpha/n<1-1/p$. In addition,
according to Remark (iii) in Page 369 of \cite{LEH}, we do not expect that
the best constant of \eqref{BI2} can be achieved.
	
\vskip 5mm
	
Next, we recall several corresponding reversed inequalities.

	In 2015,
	Dou and Zhu \cite{DZ} proved the reversed Hardy-Littlewood-Sobolev inequality
	(see also \cite{B2} and \cite{NN})
	\begin{equation}\label{vv6}
		\int_{\mathbb{R}^n}\int_{\mathbb{R}^n}\frac{|f(x)||g(y)|dxdy}{|x-y|^\lambda}
		\geq C_{n,\lambda,r} \|f\|_{L^r(\mathbb{R}^n)} \|g\|_{L^s(\mathbb{R}^n)}
	\end{equation}
for all $(f, g) \in L^r(\mathbb{R}^n) \times L^s(\mathbb{R}^n)$.
	Here $n \geq 1$, $r \in (n/(n-\lambda),1)$, $s \in (n/(n-\lambda),1)$
and $\lambda<0$ satisfy the Sobolev-type condition
\begin{equation}\label{sbv3}
\frac{1}{r}+\frac{1}{s}+\frac{\lambda}{n}=2,
\end{equation}
and $C_{n,\lambda,r}>0$ is the best constant. In addition, they also proved
that $C_{n,\lambda,r}$ is achievable. In the process of proof,
they mainly used the reversed H\"{o}lder inequality,
the reversed Young inequality and a new established Marcinkiewicz-type interpolation
involving exponents less than $1$ (could be negative).
The Euler-Lagrange system satisfied by the extremal functions is
\begin{equation}\label{j4}
		\left\{\begin{matrix}
\displaystyle			u(x)=\int_{\mathbb{R}^n}|x-y|^{\lambda}v^{-p_2}(y)dy,	\\
\displaystyle			v(x)=\int_{\mathbb{R}^n}|x-y|^{\lambda}u^{-p_1}(y)dy,
		\end{matrix}\right.
	\end{equation}
	where $n\geq 1$, $\min\{\lambda, p_1, p_2\}>0$, and \eqref{sbv3} becomes
	\begin{equation}\label{j5}
		\frac{1}{p_1-1}+\frac{1}{p_2-1}=\frac{\lambda}{n}.
	\end{equation}
	
	 When $u\equiv v$ and $p_1=p_2$, \eqref{j4} is reduced to
	\begin{equation}\label{j9}
		u(x)=\int_{\mathbb{R}^n}|x-y|^{\lambda}u^{-p_1}(y)dy,
	\end{equation}
	which is related to the study of the conformal geometry and the nonlinear elliptic PDEs.
	For $0<p_1 \leq (2n+\lambda)/\lambda$, Li \cite{L} proved that
	$p_1=(2n+\lambda)/\lambda$ and $u$ is classified as form
	\begin{equation}\label{j10}
		u(x)=a(b^2+|x-x_0|^2)^{\lambda/2},
	\end{equation}
	with $a,b>0$ and $x_0\in \mathbb{R}^n$. Afterwards,
Xu \cite{X} proved that \eqref{j9} has a positive solution
if and only if $p_1=(2n+\lambda)/\lambda$ when $\lambda>0$ and $p_1>0$,
and $u$ is given by \eqref{j10}.
    This shows a sharp lower bound of the Coulomb energy $\|(|x|^{\lambda} \ast f)f\|_{L^1(\mathbb{R}^n)}$
in the special case of $\lambda>0$. Moreover, Xu \cite{X} pointed out that\eqref{j9} has no positive solution
when $\lambda \in (-n,0)$ and $p_1>0$. In addition, the Coulomb-Sobolev inequality comes into play
in estimating the lower bound of the Coulomb energy with $\lambda<0$ and $p_1<0$
(cf. \cite{BFV,BGMMS,BGO,MMS}).

Clearly, \eqref{j5} is the critical condition of Sobolev-type. The results in \cite{DZ,NN}
show that \eqref{j5} is a necessary and sufficient condition of existence of positive solutions of \eqref{j4}.
When $p_1=p_2$, the results of radial symmetry can be found in \cite{DZ,L}.
     For \eqref{j4} with $p_1 \neq p_2$, Liu \cite{LZ} used the improved method of moving planes proposed by
     Dou, Guo and Zhu \cite{DGZ} to obtain the radial symmetry and monotonicity of the solutions.
Lemma 3 in \cite{NN} shows the asymptotic behavior of the solutions when $|x| \to \infty$.
For more related results about nonlinear equations with negative exponents,
we could refer to \cite{CX,GW,LCW,MW} and the references therein.

	In 2018, Chen et al. \cite{CLLT} proved the reversed Stein-Weiss inequality
	\begin{equation}\label{vv1}
		\int_{\mathbb{R}^n}\int_{\mathbb{R}^n}
		\frac{|f(x)||g(y)|dxdy}{|x|^\alpha|x-y|^\lambda|y|^\beta}
		\geq C_{n,\lambda,\alpha,\beta,r} \|f\|_{L^r(\mathbb{R}^n)} \|g\|_{L^s(\mathbb{R}^n)},
	\end{equation}
for all $(f,g) \in L^r(\mathbb{R}^n) \times L^s(\mathbb{R}^n)$.
	Here $n \geq 1$, $r \in (0,1)$, $s \in (0,1)$, $\lambda<0$, $\alpha \in
		(-n(1-r)/r,0]$ and $\beta \in (-n(1-s)/s,0]$ satisfy the Sobolev-type condition
\begin{equation}\label{sbv4}
\frac{1}{r}+\frac{1}{s}+\frac{\lambda+\alpha+\beta}{n}=2,
\end{equation}
and $C_{n,\lambda,\alpha,\beta,r}$ is the best constant.
Furthermore, they proved that $C_{n,\lambda,\alpha,\beta,r}$
is achievable. In addition, they also gave the asymptotic behavior of
positive solutions of the Euler-Lagrange system (cf. Theorem 3 in \cite{CLLT}).
	
	These sharp inequalities of Hardy-Littlewood-Sobolev type, Stein-Weiss type, Herbst type,
and the reversed inequalities of Hardy-Littlewood-Sobolev type and Stein-Weiss type
have many important applications in partial differential equations,
geometry and quantum field theory. They often play the crucial roles in the
study of geometric elliptic equations and integral equations involving critical exponents,
such as studying sharp Sobolev inequalities in Yamabe problems, sharp logarithmic Sobolev inequalities in Ricci flows
and estimating the upper and the lower bounds of the Coulomb energy in the Thomas-Fermi model.
Naturally, we are concerned with the reversed Herbst inequality and the corresponding extremal functions.

The first major conclusion is as follows.

	\begin{theorem}\label{Rth1} (Reversed Herbst inequality)
		Assume that $n\geq 1$, $p \in (0,1)$, $q' \in (0,1)$ and $\alpha>n$ satisfy
the Sobolev-type condition
\begin{equation}\label{CRI}
\frac{1}{p}+\frac{1}{q'}-\frac{2\alpha}{q'n}=1.
\end{equation}
				Then there exists a constant $C_{n,\alpha,p,q'}>0$ such that
		\begin{equation}\label{500}
			\left|\int_{\mathbb{R}^n}\int_{\mathbb{R}^n}|x-y|^{\alpha/q'-n}|y|^{\alpha/q'}g(x)h(y)dxdy\right|
			\geq C_{n,\alpha,p,q'}\|g\|_{L^{q'}(\mathbb{R}^n)}\|h\|_{L^p(\mathbb{R}^n)}
		\end{equation}
		for any nonnegative functions $g \in L^{q'}(\mathbb{R}^n)$ and $h \in L^p(\mathbb{R}^n)$.
	\end{theorem}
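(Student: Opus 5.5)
Since $q'\in(0,1)$, the conjugate exponent $q=q'/(q'-1)$ is negative. I would first reduce the bilinear estimate \eqref{500} to a lower bound for a linear operator. Put $a:=\alpha/q'$; then $a>n$ because $\alpha>n$ and $q'<1$. Define
$$
Th(y)\ \text{by}\ Th(x)=\int_{\mathbb{R}^n}|x-y|^{a-n}|y|^{a}h(y)\,dy .
$$
For nonnegative $g,h$, the reversed H\"older inequality gives
$$
\int_{\mathbb{R}^n} g\,Th\,dx\ \geq\ \|g\|_{L^{q'}(\mathbb{R}^n)}\Big(\int_{\mathbb{R}^n}(Th)^{q}dx\Big)^{1/q}.
$$
So it suffices to prove $\int(Th)^q\,dx\le C\|h\|_{L^p}^{q}$, which is equivalent to $\|Th\|_{L^q}\ge C\|h\|_{L^p}$ because $q<0$. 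In terms of $a$, the condition \eqref{CRI} reads $n/p=n-n/q'+2a$, that is, $n/p-n=(2a-n)/q'-a$. This is exactly the dilation balance of $\|Th\|_{L^q}$ against $\|h\|_{L^p}$, so every estimate below should be checked to be scale invariant. As a consistency check, \eqref{CRI} is formally \eqref{sbv4} with $\lambda=n-a$, weight exponents $0$ and $-a$, $r=q'$, $s=p$. I would use the structure of the proof in \cite{CLLT} (reversed H\"older, reversed Young, Marcinkiewicz-type interpolation with exponents below $1$) only as a template, not quote \eqref{vv1}, since the paper states the case is not covered.

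Next I would bound $Th$ from below pointwise by Hardy-type operators, using that the kernel exponent $a-n$ is positive. On the region $|y|\ge 2|x|$ we have $|x-y|\ge |y|/2$. On the region $|y|\le |x|/2$ we have $|x-y|\ge |x|/2$. Hence
$$
Th(x)\ \ge\ c\Big(\int_{|y|\ge 2|x|}|y|^{2a-n}h(y)\,dy+|x|^{a-n}\int_{|y|\le|x|/2}|y|^{a}h(y)\,dy\Big)=:c\,(A(x)+B(x)).
$$
Since $q<0$, we have $(A+B)^q\le \min\{A^q,B^q\}$. So it is enough to control $\int\min\{A,B\}^{q}\,dx$ by $\|h\|_p^q$. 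Both $A$ and $B$ are radial: $A$ is nonincreasing in $|x|$, and $B$ is built from an increasing truncated integral.

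The core step is a reversed Hardy inequality for $A$ and $B$ in the exponents $(p,q)$. I would proceed in three moves.
\begin{enumerate}
\item Apply the reversed H\"older inequality with the negative exponent $p'=p/(p-1)$ to each truncated integral. For $A$ this gives
$$
A(x)\ \ge\ \Big(\int_{|y|\ge2|x|}h^p\,dy\Big)^{1/p}\Big(\int_{|y|\ge 2|x|}|y|^{(2a-n)p'}dy\Big)^{1/p'} .
$$
The last integral converges because $(2a-n)p'<-n$, and it produces the power $|x|^{2a-n+n/p'}$. The bound for $B$ is analogous.
\item Decompose $\mathbb{R}^n$ dyadically into annuli. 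Take $R$ with $\int_{|y|<R}h^p=\int_{|y|>R}h^p=\tfrac12\|h\|_p^p$. On $|x|<R/2$ use the bound for $A$; on $|x|>2R$ use the bound for $B$. The factor in $h^p$ is then at least $\tfrac12\|h\|_p^p$, and the remaining power of $|x|$ raised to $q$ must be integrable near $0$ and near $\infty$ respectively. Scale invariance makes the resulting constant independent of $R$.
\item Handle the transition annulus $R/2\le|x|\le2R$. There I would use instead the full kernel, which is bounded below on a ball of radius comparable to $R$, together with one more application of the reversed H\"older inequality.
\end{enumerate}

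The main obstacle is this last step, and more generally the verification of the integrability exponents in move 1 and move 2. That is, one must check that $q\,(2a-n+n/p')>-n$ near the origin and that the corresponding $B$-exponent gives $|x|^{(\cdot)q}$ integrable at infinity, using only \eqref{CRI}, $a>n$ and $p,q'\in(0,1)$. If the pointwise splitting turns out to be too lossy for some range of parameters, my fallback is to follow \cite{CLLT}. I would prove weak-type lower bounds for $T$ at two auxiliary exponent pairs near $(p,q)$ with the same homogeneity, and interpolate with the Marcinkiewicz-type interpolation for exponents less than $1$ from \cite{DZ}. That yields the strong lower bound $\|Th\|_{L^q}\ge C\|h\|_{L^p}$, and hence \eqref{500} by the duality of the first paragraph.
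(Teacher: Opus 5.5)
Your reduction to $\|Th\|_{L^q}\ge C\|h\|_{L^p}$ via the reversed H\"older inequality is the same as the paper's. Move 2, however, fails exactly at the check you postponed. By \eqref{CRI}, $2a-n+n/p'=2a-n/p=n/q'-n=-n/q$, so $q(2a-n+n/p')=-n$ \emph{exactly}. The bounds of move 1 therefore only give $A^q,B^q\le C\|h\|_{L^p}^q|x|^{-n}$, which diverges logarithmically both at $0$ and at $\infty$. This is structural, not a parameter-range issue. By scale invariance, any bound of the form $\|h\|_{L^p}^q|x|^{\gamma}$ must have $\gamma=-n$, so the argument cannot close as long as the truncation radius moves with $|x|$. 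Move 3 is also mis-justified. Since $a>n$, the kernel $|x-y|^{a-n}|y|^{a}$ vanishes at $y=x$ (and at $y=0$), so it is not bounded below on a ball of radius comparable to $R$ around points of the annulus, and $h$ may concentrate exactly there.

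The missing idea is to freeze the truncation at the median radius $R$ and keep the leftover decay in $|x|$.
\begin{itemize}
\item For $|x|<R/2$: reversed H\"older, using $2ap>n$ (i.e.\ $q'<1$), gives $Th(x)\ge c\int_{|y|\ge R}|y|^{2a-n}h\,dy\ge c\|h\|_{L^p}R^{-n/q}$. This is constant in $x$, so $\int_{|x|<R/2}(Th)^q\le C\|h\|_{L^p}^q$.
\item For $|x|>2R$: using $ap<n(1-p)$ (i.e.\ $\alpha>n$), $Th(x)\ge c|x|^{a-n}\int_{|y|\le R}|y|^{a}h\,dy\ge c\|h\|_{L^p}R^{a+n-n/p}|x|^{a-n}$. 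Here $q(a-n)<-n$ is again equivalent to $\alpha>n$, so $\int_{|x|>2R}(Th)^q\le C\|h\|_{L^p}^qR^{n+q(2a-n/p)}=C\|h\|_{L^p}^q$.
\item On the annulus: apply reversed H\"older with the full kernel. The integral $\int|x-y|^{(a-n)p'}|y|^{ap'}dy$ is finite under the same two conditions, giving $Th(x)\ge c\|h\|_{L^p}|x|^{-n/q}$, and $\int_{R/2<|x|<2R}|x|^{-n}dx=\omega_{n-1}\log 4$.
\end{itemize}

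With this repair your route is a correct and genuinely different proof. The paper instead writes $V_\alpha$ as a convolution on the multiplicative group with Haar measure $dr/r$ and applies the reversed Young inequality (Lemma \ref{Rle1}). It does this directly for $n=1$, and for $n\ge2$ only after symmetric decreasing rearrangement (Lemma \ref{lem2.3}) and spherical averaging (Lemma \ref{Rle2}). Your argument needs neither rearrangement, nor reversed Young, nor the interpolation fallback, and it treats all $n\ge1$ uniformly, at the cost of a cruder constant. The paper's argument gives the constant as an explicit kernel norm and prepares the radial reduction reused for Theorem \ref{Rth2}.
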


\begin{remark}
(i) The reversed inequality \eqref{500} cannot be covered by \eqref{vv1}.
In fact, $\alpha \in (-n(1-r)/r,0]$, $\beta \in (-n(1-s)/s,0]$ and the
Sobolev-type condition \eqref{sbv4} imply $\alpha \in
(-n(1-r)/r,-n(1-r)/r-\lambda)$ and $\beta \in (-n(1-s)/s,-n(1-s)/s-\lambda)$.
This shows that both \eqref{vv1} with $\alpha=0$ and \eqref{vv1} with $\beta=0$
may not hold when $\max\{r,s\}<n/(n-\lambda)$.

(ii) Comparing \eqref{500} with \eqref{BI2}, we observe that their right hand sides are different.
Namely, $p$ and $q'$ in the right hand side of \eqref{500} are not H\"older conjugated.
Otherwise, $pq'<0$ and the right hand side of \eqref{500} becomes a quotient which
restricts the application of \eqref{500}.
In addition, the extremal function of \eqref{BI2} may not exist,
but we can find an extremal function of \eqref{500}
(see the following Theorem \ref{Rth2}).
\end{remark}

Write
	\begin{equation*}
		V_{\alpha}(h)(x):=\int_{\mathbb{R}^n}|x-y|^{\alpha/q'-n}h(y)|y|^{\alpha/q'}dy,
	\end{equation*}
and
\begin{equation}\label{cc1}
		C_{n,\alpha, p, q'}:=\inf\{\|V_{\alpha}(h)\|_{L^q(\mathbb{R}^n)}:h\geq 0, \|h\|_{L^p(\mathbb{R}^n)}=1\},
	\end{equation}
	where ${1}/{q}+{1}/{q'}=1$.

	\begin{theorem}\label{Rth2}
		Assume $n\geq 1$, $p\in (0,1)$, $q'\in (0,1)$ and $\alpha >n$ satisfy
\eqref{CRI}. Then		
there exists a non-negative function $h\in L^p(\mathbb{R}^n)$ such that $\|h\|_{L^p(\mathbb{R}^n)}=1$ and $\|V_{\alpha}(h)\|_{L^q(\mathbb{R}^n)}=C_{n,\alpha, p, q'}$.
	\end{theorem}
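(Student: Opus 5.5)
The plan is a direct minimization. Throughout, $a:=\alpha/q'>n$ and $q=q'/(q'-1)<0$, so $\|F\|_{L^q}=(\int F^q)^{1/q}$ is monotone increasing in $F\ge 0$. Minimizing $\|V_\alpha h\|_{L^q}$ is the same as maximizing $\int (V_\alpha h)^q$. Theorem \ref{Rth1}, combined with the reversed H\"older inequality $\int g\,V_\alpha h\ge \|g\|_{L^{q'}}\|V_\alpha h\|_{L^q}$ and its equality case, shows that $C_{n,\alpha,p,q'}>0$. Take a minimizing sequence $h_j\ge0$ with $\|h_j\|_{L^p}=1$ and $\|V_\alpha h_j\|_{L^q}\to C_{n,\alpha,p,q'}$.

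\textbf{Step 1 (symmetrization).} I would show that we may take each $h_j$ radially symmetric and nonincreasing. The intended tool is a reversed Riesz-type rearrangement inequality, as in Dou--Zhu for increasing kernels $|x-y|^{a-n}$, applied to the bilinear form
\[
\int\!\!\int |x-y|^{a-n}|y|^{a}g(x)h(y)\,dx\,dy .
\]
I would pair it with the Hardy--Littlewood fact that an increasing radial weight $|y|^a$ satisfies $\int |y|^a h\ge \int |y|^a h^*$. By duality, $\|V_\alpha h\|_{L^q}=\inf_{g}\int g V_\alpha h/\|g\|_{L^{q'}}$. This should yield $\|V_\alpha h^*\|_{L^q}\le \|V_\alpha h\|_{L^q}$ while $\|h^*\|_{L^p}=\|h\|_{L^p}$. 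I expect this to be the main obstacle, since the weight and the kernel must be handled together. If a clean rearrangement inequality is unavailable, my fallback is to restrict to radial functions by averaging over rotations. Minkowski's inequality in the reversed direction for $p<1$ and the concavity of $F\mapsto \int F^q$ ought to control that step.

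\textbf{Step 2 (normalization and compactness).} $V_\alpha$ commutes with dilations $h_\lambda(y)=\lambda^{n/p}h(\lambda y)$ up to the factor dictated by \eqref{CRI}, so the ratio is dilation invariant. I would therefore normalize $\int_{|y|<1}h_j^p=\tfrac12$. Radial monotonicity gives $h_j(y)\le C|y|^{-n/p}$ uniformly in $j$. By Helly's selection theorem, a subsequence converges a.e. to a radial nonincreasing $h$ with $\|h\|_{L^p}\le 1$. The lower bound
\[
V_\alpha h_j(x)\ge c\,(1+|x|)^{a-n}\Big(\int_{|y|<1}h_j\Big)
\]
needs $\int_{|y|<1}h_j\ge c>0$. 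For decreasing functions with $p<1$ and normalized $L^p$ mass on the unit ball, this follows from $h_j\ge h_j(1)$ on the ball together with the pointwise bound. The resulting bound gives the domination $(V_\alpha h_j)^q\le C(1+|x|)^{q(a-n)}$. Using \eqref{CRI}, I would check that $q(a-n)<-n$, so this dominating function is integrable.

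\textbf{Step 3 (passing to the limit and no loss of mass).} Fatou's lemma gives $V_\alpha h\le\liminf V_\alpha h_j$ pointwise. Since $q<0$ and the domination from Step 2 holds, dominated (reverse Fatou) convergence gives $\int (V_\alpha h)^q\ge\limsup\int (V_\alpha h_j)^q$, that is, $\|V_\alpha h\|_{L^q}\le C_{n,\alpha,p,q'}$. The normalization ensures $h\not\equiv0$. Finally, homogeneity gives
\[
\|V_\alpha (h/\|h\|_{L^p})\|_{L^q}\le C_{n,\alpha,p,q'}/\|h\|_{L^p}.
\]
If $\|h\|_{L^p}<1$, this would be strictly less than $C_{n,\alpha,p,q'}$, contradicting the definition \eqref{cc1}. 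Hence $\|h\|_{L^p}=1$ and $h$ attains the infimum.
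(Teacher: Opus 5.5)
\paragraph{The gap: Step 3 does not rule out loss of $L^p$ mass.} Your final homogeneity argument runs in the wrong direction. If $\|h\|_{L^p}<1$, then $C_{n,\alpha,p,q'}/\|h\|_{L^p}$ is \emph{larger} than $C_{n,\alpha,p,q'}$, so nothing contradicts \eqref{cc1}. That trick works for maximization problems, not for this reversed minimization. From Fatou you get $\|V_\alpha(h)\|_{L^q}\le C_{n,\alpha,p,q'}$ and $\|h\|_{L^p}\le 1$. The definition of the infimum then only gives $C_{n,\alpha,p,q'}\|h\|_{L^p}\le\|V_\alpha(h)\|_{L^q}\le C_{n,\alpha,p,q'}$, which is consistent with $L^p$ mass escaping to the origin or to infinity. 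For the same reason, your normalization $\int_{B_1}h_j^p=\frac12$ does not ensure $h\not\equiv0$, since that mass may concentrate at $0$. The missing idea is the paper's Step 3, which proves $\|h\|_{L^p}=1$ directly:
\begin{itemize}
\item Since $\int\lim_j[V_\alpha(H_j)]^q\,dx=C_{n,\alpha,p,q'}^q>0$, the values $V_\alpha(H_j)(x_m)$ stay bounded at two distinct points $x_1,x_2$.
\item The inequality $|x_1-x_2|^{a-n}\le C(|x_1-y|^{a-n}+|x_2-y|^{a-n})$ then gives $\sup_j\int H_j(y)|y|^{a}dy<\infty$.
\item The reversed H\"older inequality with $1/\tau\in(1+\alpha/(nq'),1/p)$, together with monotonicity, turns this into $H_j(x)\le C|x|^{-n/\tau}$ on bounded sets, with $np/\tau<n$.
\item Monotonicity and the bound at $x_1$ give $H_j(R)\le CR^{-2a}$ for large $R$, where $2ap>n$ by \eqref{CRI}.
\item Dominated convergence then yields $\int H_j^p\to\int h^p$.
\end{itemize}
Without such an argument your proof does not produce a minimizer.

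\paragraph{Step 2 also needs repair.} The inequality $V_\alpha h_j(x)\ge c(1+|x|)^{a-n}\int_{B_1}h_j$ is false. The weight $|y|^{a}$ makes mass near the origin almost invisible; take $h=M\chi_{B_\delta}$ and let $\delta\to0$. The fact $h_j\ge h_j(1)$ gives nothing uniform, because your normalization does not bound $h_j(1)$ from below. The bound $\int_{B_1}h_j\ge c$ does hold, by H\"older, but it is not the quantity you need. The paper avoids this by using Lemma \ref{Rlem2} to normalize $H_j(1)>c_0$, which yields \eqref{507} immediately. With your normalization you can instead use the reversed H\"older inequality:
\begin{itemize}
\item $\int_{B_1}|y|^{ap/(p-1)}dy<\infty$, which under \eqref{CRI} is equivalent to $\alpha>n$, so $\int_{B_1}|y|^{a}h_j\ge c$;
\item $\int_{|y|>1}|y|^{(2a-n)p/(p-1)}dy<\infty$, which under \eqref{CRI} is equivalent to $q'<1$, so $\int_{|y|>1}|y|^{2a-n}h_j\ge c$.
\end{itemize}
Together with the radial monotonicity of $V_\alpha h_j$, these give the domination $V_\alpha h_j\ge c(1+|x|)^{a-n}$.

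\paragraph{Step 1 fallback.} Rotation-averaging gains nothing. By concavity, averaging increases both $\|V_\alpha h\|_{L^q}$ and $\|h\|_{L^p}$, so the ratio is not controlled. It also does not produce the monotone functions that Helly's theorem needs. You genuinely need Lemma \ref{lem2.3}(iii).
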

	
Here we use the scheme in \cite{LEH} to prove Theorem \ref{Rth2}.
This scheme has been successfully applied to find the extremal functions of the reversed inequalities
\eqref{vv6} and \ref{vv1} (cf. \cite{NN} and \cite{CLLT} respectively).	

\vskip 3mm
	
	To describe the extremal functions, we focus on the corresponding Euler-Lagrange system.
Minimizing the functional
	$$
		J(g,h)=\int_{\mathbb{R}^n}\int_{\mathbb{R}^n}|x-y|^{\alpha/q'-n}g(x)h(y)|y|^{\alpha/q'}dxdy
	$$
	under the constrictions $\|g\|_{L^{q'}(\mathbb{R}^n)}=\|h\|_{L^p(\mathbb{R}^n)}=1$,
we obtain
	$$
		\left\{\begin{matrix}
\displaystyle			J(g,h)g(x)^{q'-1}=\int_{\mathbb{R}^n}|x-y|^{\alpha/q'-n}h(y)|y|^{\alpha/q'}dy,	\\
\displaystyle			J(g,h)h(x)^{p-1}=\int_{\mathbb{R}^n}|x-y|^{\alpha/q'-n}g(y)|x|^{\alpha/q'}dy.
		\end{matrix}\right.
	$$
	Let $u=c_3g^{q'-1}$, $v=c_4h^{p-1}$, $-p_1=1/(q'-1)$, $-p_2=1/(p-1)$ and $\beta=\alpha/q'$.
	Now, the system above becomes
	\begin{equation}\label{j3}
		\left\{\begin{matrix}
\displaystyle			u(x)=\int_{\mathbb{R}^n}|x-y|^{\beta-n}v^{-p_2}(y)|y|^{\beta}dy,	\\
\displaystyle			v(x)=\int_{\mathbb{R}^n}|x-y|^{\beta-n}u^{-p_1}(y)|x|^{\beta}dy.
		\end{matrix}\right.
	\end{equation}
	for appropriate constants $c_3$ and $c_4$. Now, the Sobolev-type condition \eqref{CRI}
becomes		
\begin{equation}\label{105}
\frac{1}{p_1-1}+\frac{1}{p_2-1}=\frac{2\beta}{n}-1.
\end{equation}
	
	We have the nonexistence result for the positive solution of \eqref{j3}.

	\begin{theorem}\label{Rth5}
		If $\beta<n$ and $\max\{p_1, p_2\}>0$,
		\eqref{j3} has no positive solution in $L_{loc}^{\infty}(\mathbb{R}^n\setminus\{0\})$.
	\end{theorem}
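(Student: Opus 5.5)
The plan is a proof by contradiction. Assume $(u,v)$ is a positive solution of \eqref{j3} in $L^\infty_{loc}(\mathbb{R}^n\setminus\{0\})$ with $\beta<n$. I will use $\beta>0$ (as in the derivation, where $\beta=\alpha/q'$). The idea is to show that, because the kernel $|x-y|^{\beta-n}$ decays when $\beta<n$, each of $u$ and $w(x):=|x|^{-\beta}v(x)=\int_{\mathbb{R}^n}|x-y|^{\beta-n}u^{-p_1}(y)dy$ behaves like $|x|^{\beta-n}$ at infinity. Then a positive exponent $p_1$ or $p_2$ makes the integrand in the other equation too large for that equation to be finite.

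\textbf{Step 1 (integrability and the decay rate at infinity).} Since $u(x_0)<\infty$ at some point $x_0\neq0$, and $|x_0-y|^{\beta-n}\ge c|y|^{\beta-n}$ for $|y|\ge 2|x_0|$, we get
$$\int_{|y|\ge 2|x_0|}|y|^{2\beta-n}v^{-p_2}(y)\,dy<\infty .$$
Similarly, $v(x_0)<\infty$ gives $\int_{|y|\ge 2|x_0|}|y|^{\beta-n}u^{-p_1}(y)\,dy<\infty$. Local integrability near $x_0$ follows the same way.

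For $|x|\to\infty$ I split each integral into three regions: $|y|\le|x|/2$, $|x-y|\le|x|/2$, and the remainder. Using only these finiteness facts, I aim to show
$$u(x)\le C|x|^{\beta-n}\Big(1+\int_{|y|\le |x|/2}v^{-p_2}(y)|y|^\beta dy\Big)+(\text{near-diagonal part}),$$
and the analogous estimate for $w$.

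For the lower bounds: pick a fixed ball $B\subset\mathbb{R}^n\setminus\{0\}$, on which $u$ and $v$ are bounded above and below by positive constants, since they are continuous, positive and in $L^\infty_{loc}$. Restricting the integrals to $B$ gives
$$u(x)\ge c(1+|x|)^{\beta-n},\qquad w(x)\ge c(1+|x|)^{\beta-n}$$
for every sign of $p_1,p_2$.

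\textbf{Step 2 (case $p_1>0$).} If I can also prove the upper bound $u(y)\le C|y|^{\beta-n}$ for large $|y|$, then $u^{-p_1}(y)\ge c|y|^{p_1(n-\beta)}$. This grows at infinity, so $\int|x_0-y|^{\beta-n}u^{-p_1}(y)dy$ diverges, because $(\beta-n)+p_1(n-\beta)+n>\beta-n+n=\beta>0$. This contradicts $v(x_0)<\infty$.

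\textbf{Step 3 (case $p_2>0$).} By the same mechanism, an upper bound $w(y)\le C|y|^{\beta-n}$ gives $v(y)\le C|y|^{2\beta-n}$. Hence $v^{-p_2}(y)\ge c|y|^{-p_2(2\beta-n)}$, and the integrand of $u(x_0)$ is at least $c|y|^{2\beta-n-p_2(2\beta-n)}$ at infinity. This integrand is not integrable when $(2\beta-n)(1-p_2)\ge -n$, which holds whenever $2\beta\le n$, or whenever $p_2\le 1$ and $\beta>0$.

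When $2\beta>n$ and $p_2>1$ this crude count is not enough. In that sub-case I will instead use the lower bound $u\ge c|y|^{\beta-n}$ from Step 1, combined with finiteness of $v$, to first rule out $p_1>0$ via Step 2, and then exploit both equations together. Concretely, I would feed the lower bound for $v$ into the equation for $u$ and iterate the bounds.

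\textbf{Main obstacle.} The key difficulty is proving the \emph{upper} decay bounds $u,w\le C|x|^{\beta-n}$ from finiteness at a single point. The first thing to try is the dominated-convergence argument
$$|x|^{n-\beta}u(x)\to\int_{\mathbb{R}^n} v^{-p_2}|y|^\beta\,dy .$$
This requires the latter integral to be finite, which is a \emph{global} integrability statement including near $0$. If global finiteness fails, I would switch to comparing $u(x)$ with $u(x_0)$ on annuli, using $|x-y|\ge c|x_0-y|\,|x|/|x_0|$-type inequalities off the near-diagonal region. The near-diagonal contribution would be handled via the local boundedness of $v$ on the annulus.
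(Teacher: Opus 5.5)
You leave the key step unproved, and the $p_2>0$ branch you attempt cannot be proved at all.

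\textbf{The decay bounds, and the idea you are missing.} Your Steps 2 and 3 both depend on the pointwise decay bounds $u,w\le C|x|^{\beta-n}$. You leave these open as the ``main obstacle'', and neither suggested route gives them.

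Finiteness of $u(x_0)$ only gives $\int_{|y|\ge 2|x_0|}|y|^{2\beta-n}v^{-p_2}dy<\infty$. This is weaker at infinity, by the factor $|y|^{n-\beta}\to\infty$, than the global finiteness of $\int v^{-p_2}|y|^{\beta}dy$ that your dominated-convergence limit needs. The near-diagonal piece $\int_{|x-y|<|x|/2}$ needs control of $v^{-p_2}$ on the annuli $|y|\sim|x|$, uniformly as $|x|\to\infty$. $L^\infty_{loc}$ does not provide that.

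The missing idea, which is the paper's argument, is that for $p_1>0$ no decay is needed, only an \emph{averaged} upper bound on $u$.
\begin{itemize}
\item The kernel satisfies $\frac{1}{|B_\delta|}\int_{B_\delta(x_0)}|x-y|^{\beta-n}dx\le C|x_0-y|^{\beta-n}$ with $C$ independent of $\delta$. To see this, treat $|x_0-y|\ge2\delta$ and $|x_0-y|<2\delta$ separately.
\item Tonelli then gives $\frac{1}{|B_\delta|}\int_{B_\delta(x_0)}u\le Cu(x_0)$.
\item Convexity of $t\mapsto t^{-p_1}$ (the paper writes this as H\"older) gives $\frac{1}{|B_\delta|}\int_{B_\delta(x_0)}u^{-p_1}\ge (Cu(x_0))^{-p_1}$.
\item In the second equation, $|x_0-x|^{\beta-n}\ge\delta^{\beta-n}$ on $B_\delta(x_0)$, so $v(x_0)\ge c|x_0|^\beta\delta^\beta u(x_0)^{-p_1}\to\infty$ as $\delta\to\infty$.
\end{itemize}

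\textbf{The case $\beta\le 0$.} You drop it, but the statement includes it. Your justification via $\beta=\alpha/q'$ would actually force $\beta>n$. The paper handles this case directly: $\int_{B_{|x|/2}(x)}|x-y|^{\beta-n}dy=\infty$, which makes $u$ infinite for $1/2<|x|<1$.

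\textbf{The branch $p_2>0$.} Your Step 3 and the iteration sketch cannot be completed, because this branch of the statement is false in general when $p_1<0$. Take $n=3$, $\beta=1$, $p_2=6$, $p_1=-3/2$, $u=A|x|^{-1}$, $v=B|x|^{1/2}$.
\begin{itemize}
\item Then $v^{-p_2}(y)|y|^{\beta}=B^{-6}|y|^{-2}$ and $u^{-p_1}(y)=A^{3/2}|y|^{-3/2}$.
\item By scaling and rotation invariance, $\int_{\mathbb{R}^3}|x-y|^{-2}|y|^{-2}dy=K_1|x|^{-1}$ and $\int_{\mathbb{R}^3}|x-y|^{-2}|y|^{-3/2}dy=K_2|x|^{-1/2}$, with $0<K_1,K_2<\infty$. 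Both integrands are integrable near $0$, near $x$ and at infinity.
\item Hence \eqref{j3} holds exactly when $A=K_1B^{-6}$ and $B=K_2A^{3/2}$, i.e. $B^{10}=K_2K_1^{3/2}$.
\end{itemize}
This is a positive solution in $L^\infty_{loc}(\mathbb{R}^3\setminus\{0\})$ with $\beta<n$ and $\max\{p_1,p_2\}>0$. In it, $w=B|x|^{-1/2}$ is not $O(|x|^{\beta-n})=O(|x|^{-2})$, so the upper bound Step 3 relies on is genuinely false.

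The paper's Case 2 likewise covers only $p_1>0$: its H\"older step uses the exponents $p_1+1$ and $(p_1+1)/p_1$, and it never treats $p_2>0\ge p_1$. What the paper's argument actually proves is the case $\beta\le0$ and the case $0<\beta<n$ with $p_1>0$.
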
	

When $\beta \geq 0$, we call $(u,v)$ a pair of {\it positive entire solutions} of \eqref{j3}, if
$u,v \in L_{loc}^\infty(\mathbb{R}^n)$ solve \eqref{j3} pointwise, and $u>0$ in $\mathbb{R}^n$,
$v>0$ in $\mathbb{R}^n \setminus \{0\}$ and $v(0)=0$.
	
	\begin{theorem}\label{th3}
		Let $(u,v)$ be a pair of positive entire solutions of \eqref{j3} with $\beta>n$.
		If either $p_1>\beta/(\beta-n)$ or $2\beta/(\beta-n)<p_2<(n+\beta)/\beta$,
		we have

(i) $u(x)\simeq |x|^{\beta-n}$ and
				$v(x)\simeq |x|^{2\beta-n}$ when $|x| \to \infty$;
$v(x)\simeq |x|^{\beta}$ when $|x| \to 0$;

		(ii) $u^{-1}\in L^s(\mathbb{R}^n)$ for all $s>n/(\beta-n)$,
		$v^{-1}\in L^t(\mathbb{R}^n)$ for all $t \in (n/(2\beta-n),n/\beta)$.
		
		(iii) Furthermore, if $u$ and $v$ are differentiable,
the Sobolev-type condition \eqref{105} must hold.
	\end{theorem}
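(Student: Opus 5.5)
The plan is to derive everything from the integral representation in \eqref{j3} together with elementary two-sided bounds on the kernel. Since $\beta>n$, the kernel $|x-y|^{\beta-n}$ \emph{grows}, and $|x-y|^{\beta-n}\le C(|x|^{\beta-n}+|y|^{\beta-n})$.

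\textbf{Step 1 (integrability of the right-hand sides).} Finiteness of $u$ and of $v(x)|x|^{-\beta}$ at a single point already gives
\[
\int_{\mathbb{R}^n}(1+|y|)^{\beta-n}v^{-p_2}(y)|y|^{\beta}dy<\infty,\qquad \int_{\mathbb{R}^n}(1+|y|)^{\beta-n}u^{-p_1}(y)dy<\infty .
\]
Indeed, $|x-y|^{\beta-n}\ge c(1+|y|)^{\beta-n}$ outside a neighbourhood of $x$, and inside it the integrands are controlled by local boundedness and positivity of $u$ and $v$ away from $0$. Near $y=0$ I expect to need $v(y)\gtrsim|y|^\beta$, which comes from
\[
v(x)\ge |x|^\beta\int_{B_1}|x-y|^{\beta-n}u^{-p_1}dy .
\]
Then integrability of $v^{-p_2}|y|^\beta$ at the origin requires $\beta(1-p_2)>-n$, i.e.\ $p_2<(n+\beta)/\beta$. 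This is where the stated restrictions on $p_1,p_2$ enter as consistency conditions. In the case $p_1>\beta/(\beta-n)$, I will instead use the a priori lower bound $u\ge c(1+|y|)^{\beta-n}$ (restrict the $u$-integral to a fixed ball) to see directly that $u^{-p_1}(1+|y|)^{\beta-n}\lesssim (1+|y|)^{(\beta-n)(1-p_1)}$ is integrable at infinity.

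\textbf{Step 2 (asymptotics, part (i)).} Dominated convergence with the majorant $C(1+|y|^{\beta-n})$ from Step 1 yields
\[
\lim_{|x|\to\infty}\frac{u(x)}{|x|^{\beta-n}}=\int_{\mathbb{R}^n}v^{-p_2}|y|^\beta dy\in(0,\infty),\qquad \lim_{|x|\to\infty}\frac{v(x)}{|x|^{2\beta-n}}=\int_{\mathbb{R}^n}u^{-p_1}dy\in(0,\infty).
\]
Likewise,
\[
\lim_{x\to0}\frac{v(x)}{|x|^{\beta}}=\int_{\mathbb{R}^n}|y|^{\beta-n}u^{-p_1}dy\in(0,\infty).
\]
Finiteness of these limits is exactly Step 1.

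\textbf{Step 3 (integrability, part (ii)).} Part (ii) then follows by splitting into $B_1$ and its complement. The function $u^{-1}$ is bounded on $B_1$ and is $\simeq|x|^{n-\beta}$ at infinity, so it lies in $L^s$ iff $s(\beta-n)>n$. The function $v^{-1}$ is $\simeq|x|^{-\beta}$ near $0$ and $\simeq|x|^{n-2\beta}$ at infinity, giving $n/(2\beta-n)<t<n/\beta$.

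\textbf{Step 4 (the Sobolev condition, part (iii)).} For (iii) I will derive a Pohozaev-type identity by a scaling argument. For $\lambda>0$ consider
\[
I(\lambda)=\int\!\!\int|x-y|^{\beta-n}|y|^\beta u^{-p_1}(\lambda x)\,v^{-p_2}(y)\,dxdy .
\]
Compute $I'(1)$ in two ways. Differentiating under the integral gives $-p_1\int (x\cdot\nabla u)\,u^{-p_1-1}v\,dx$, after using the second equation to collapse the $y$-integral. Alternatively, change variables $x\to x/\lambda$ and use the homogeneity of the kernel, which produces a multiple of
\[
\int u^{-p_1}u\,dx=\int v^{-p_2}v\,dx .
\]
Integrating $x\cdot\nabla(u^{1-p_1})$ by parts, and treating the symmetric quantity with $v$ in the same way, gives two relations. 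Together with the identity $\int u^{1-p_1}=\int v^{1-p_2}$ (Fubini on \eqref{j3}), these force
\[
\frac{n}{p_1-1}+\frac{n}{p_2-1}=2\beta-n ,
\]
which is \eqref{105}.

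\textbf{Main obstacle.} I expect the hardest part to be justifying the integrations by parts over $B_R\setminus B_\varepsilon$ and showing that the boundary terms vanish as $R\to\infty$ and $\varepsilon\to0$. This will use the asymptotics in (i), namely $R^{n}\,u^{1-p_1}(R)\to0$ and the corresponding statement for $v$, which hold precisely in the integrability ranges of (ii). It also requires finiteness of $\int u^{1-p_1}$, which comes from Step 1 via Fubini. If differentiation under the integral sign is delicate, I will first truncate the integrals to balls and pass to the limit using the same decay estimates.
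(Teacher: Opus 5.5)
Your treatment of (i)--(ii) takes a genuinely different, and stronger, route than the paper. The paper (Proposition \ref{prop}) proves the two-sided rates by splitting $\mathbb{R}^n$ into $B_R$, $B_{2|x|}\setminus B_R$ and the exterior region. It uses $p_1>\beta/(\beta-n)$ (or the $p_2$-range) to bound the outer pieces, and then reads off the other range as a necessary condition from finiteness of $u$ or $v$. You instead produce integrable majorants and apply dominated convergence, which gives exact limits such as $u(x)|x|^{n-\beta}\to\int v^{-p_2}|y|^\beta dy$.

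Your Step 1 needs none of the hypotheses, and the restrictions on $p_1,p_2$ do not actually ``enter'' there. For distinct $x_1,x_2$ one has $|x_1-y|^{\beta-n}+|x_2-y|^{\beta-n}\ge c(1+|y|)^{\beta-n}$. Hence $\int(1+|y|)^{\beta-n}v^{-p_2}|y|^\beta dy\le C(u(x_1)+u(x_2))$, and, with $x_1,x_2\neq0$, $\int(1+|y|)^{\beta-n}u^{-p_1}dy\le C(|x_1|^{-\beta}v(x_1)+|x_2|^{-\beta}v(x_2))$. This is cleaner than one point plus ``positivity'': the kernel vanishes at $y=x$, and pointwise positivity is not a lower bound. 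In your case $p_1>\beta/(\beta-n)$, it also removes the need for $p_2<(n+\beta)/\beta$ near the origin.

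Consequently (i) holds for every positive entire solution. Finiteness of the two majorants together with the rates then forces $p_1>\beta/(\beta-n)$ and $2\beta/(2\beta-n)<p_2<(n+\beta)/\beta$. So the equivalence in Proposition \ref{prop} comes for free, and the hypotheses of the theorem are automatic. The same dominated convergence shows that $u$ and $|x|^{-\beta}v$ are continuous and positive, which gives the local lower bounds you use in Step 3.

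For (iii) your idea is the paper's: a scaling/Pohozaev identity, Fubini, and $\int u^{1-p_1}=\int v^{1-p_2}=:M$. The finiteness of $M$ does follow by Tonelli, since $|x-y|^{\beta-n}\le(1+|x|)^{\beta-n}(1+|y|)^{\beta-n}$. However, the computation as you wrote it is off in two places.

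First, in $I'(1)$ the $y$-integral collapses by the \emph{first} equation to $u(x)$, so the direct derivative is $-p_1\int u^{-p_1}\,x\cdot\nabla u\,dx$, not an expression involving $v$.

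Second, substituting only $x\to x/\lambda$ leaves $|x/\lambda-y|^{\beta-n}$, which is not homogeneous in $\lambda$. You get $I'(1)=-nM-(\beta-n)\iint|x-y|^{\beta-n-2}(x-y)\cdot x\,|y|^\beta u^{-p_1}(x)v^{-p_2}(y)\,dx\,dy$. So neither of your two relations is a multiple of $M$ by itself, and the cross terms cancel only when the $u$- and $v$-relations are added.

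The clean form is to scale both variables. Then $K(\lambda)=\iint|x-y|^{\beta-n}|y|^\beta u^{-p_1}(\lambda x)v^{-p_2}(\lambda y)\,dx\,dy=\lambda^{-n-2\beta}M$. Differentiating directly and integrating by parts gives $K'(1)=-\bigl(\frac{np_1}{p_1-1}+\frac{np_2}{p_2-1}\bigr)M$, and equating the two expressions gives \eqref{105}.

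The paper organizes the same identity one-sidedly: it differentiates $u(\mu x)=\mu^{2\beta}\int|x-z|^{\beta-n}|z|^\beta v^{-p_2}(\mu z)\,dz$ and kills boundary terms only along a sequence $R_j\to\infty$. Your use of the explicit rates on $B_R\setminus B_\varepsilon$ is cleaner, and it also handles the origin, where $v^{1-p_2}$ blows up. In either version, differentiating under the integral sign needs some control of $\nabla u$ and $\nabla v$ that the rates do not supply, so that step deserves the care you flag.
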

	
In fact, the assumption of differentiability in Theorem \ref{th3} is not essential.
Note that $u$ and $v$ have no singularity when $\beta>0$. Similar to the regularity arguments
in \cite{CLLT,L}, both $u$ and $v$ are differentiable. Thus,
Theorem \ref{th3} (i) and (ii) show the asymptotic behavior and the integrability
of positive entire solutions. Theorem \ref{th3} (iii) implies the Sobolev-type condition
is a necessary condition of the existence of positive entire solutions.

\vskip 5mm

Comparing with the reversed Herbst-type system \eqref{j3},	
we consider the Herbst-type system
\begin{equation}\label{j8}
		\begin{cases}
			u(x)=\displaystyle\int_{\mathbb{R}^n}\frac{v^{q_2}(y)}{|x-y|^{n-\gamma}|y|^{\gamma}}dy,	\\
			v(x)=\displaystyle\int_{\mathbb{R}^n}\frac{u^{q_1}(y)}{|x-y|^{n-\gamma}|x|^{\gamma}}dy,
		\end{cases}
	\end{equation}
where $n \geq 1$, $\gamma \in (0,n)$, $\min\{q_1,q_2\}>0$.
It is the Euler-Lagrange system related to the Herbst inequality \eqref{BI2}.
Although \eqref{BI2} may not have an extremal function, we can consider the existence of
positive super solutions of \eqref{j8}.
Positive functions $u$ and $v$ are called the positive super solutions (lower solutions) of \eqref{j8},
if $u$ and $v$ are positive in $\mathbb{R}^n \setminus \{0\}$ and satisfy
$$
\begin{cases}
\displaystyle			u(x) \geq (\leq) \ c_1\int_{\mathbb{R}^n}\frac{v^{q_2}(y)}{|x-y|^{n-\gamma}|y|^{\gamma}}dy,	\\
\displaystyle			v(x) \geq (\leq) \ c_2\int_{\mathbb{R}^n}\frac{u^{q_1}(y)}{|x-y|^{n-\gamma}|x|^{\gamma}}dy,
		\end{cases}
$$
for some positive constants $c_1$ and $c_2$.

Recall several existence results of \eqref{889}. Now,
the Sobolev-type condition \eqref{sbv2} becomes
$$
\frac{1}{r+1}+\frac{1}{s+1}=\frac{\alpha+\lambda+\beta}{n}.
$$
When $\alpha=\beta=0$, it may be a critical condition describing the existence/nonexistence of positive regular
solutions of \eqref{889}. In fact, Theorem 5.1 in \cite{CDM} shows the nonexistence of positive radial
solutions in the subcritical case. Such a critical condition also plays the key role in the study of
the Lane-Emden system of partial differential equations. It is associated with the Lane-Emden conjecture.
In addition, \cite{CDM} also gave
another condition (the Serrin-type condition) which can be used to describe the
existence/nonexistence of positive super solutions of \eqref{889} (see also \cite{DM,LeiLi,LL}).

For \eqref{j8}, the Sobolev-type condition \eqref{sbv5} becomes
    \begin{equation}\label{serrin}
		\frac{1}{q_1+1}+\frac{1}{q_2+1}=1.
    \end{equation}
    Clearly, it is equivalent to $q_1q_2=1$.

\begin{theorem}\label{th10} Let $n \geq 1$, $\gamma \in (0,n)$ and $\min\{q_1,q_2\}>0$.

		(i) If one of the three conditions $0<q_1q_2<1$, $0 < q_1 \leq \gamma/(n-\gamma)$ and
$q_2 \geq (n-\gamma)/\gamma$ holds, \eqref{j8} has no positive super solution in
$L_{loc}^{\infty}(\mathbb{R}^n\setminus\{0\})$ for any positive constants $c_1$ and $c_2$.
		
(ii) If $q_1q_2=1$, and $q_1 > \gamma/(n-\gamma)$ (or $0 <q_2< (n-\gamma)/\gamma$), \eqref{j8} has positive
radially symmetric and decreasing super solutions (and lower solutions) in
$L_{loc}^{\infty}(\mathbb{R}^n\setminus\{0\})$.
	\end{theorem}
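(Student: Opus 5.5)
For part (i) I would argue by contradiction. Assume $(u,v)$ is a positive super solution in $L_{loc}^{\infty}(\mathbb{R}^n\setminus\{0\})$, and derive pointwise lower bounds from the integral inequalities until one of the integrals is forced to diverge.

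\emph{Basic bounds.} Since $v>0$, the quantity $\int_{1<|y|<2}v^{q_2}(y)|y|^{-\gamma}dy$ is positive. Using $|x-y|\le 2|x|$ for $|x|\ge 2$ gives $u(x)\gtrsim |x|^{\gamma-n}$ for $|x|\ge 2$. In the same way, restricting to a fixed annulus and using $|x|^{-\gamma}$ gives $v(x)\gtrsim |x|^{-n}$ at infinity and $v(x)\gtrsim |x|^{-\gamma}$ near $0$.

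\emph{Case $q_1\le \gamma/(n-\gamma)$.} Insert $u\gtrsim|y|^{\gamma-n}$ into the second inequality. Then $v(x)\gtrsim |x|^{-\gamma}\int_{|y|>2|x|}|y|^{\gamma-n+(\gamma-n)q_1}dy$. This integral diverges exactly when $(n-\gamma)(1+q_1)\le n$, i.e.\ when $q_1\le\gamma/(n-\gamma)$. Hence $v\equiv\infty$, a contradiction.

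\emph{Case $q_2\ge (n-\gamma)/\gamma$.} Insert $v\gtrsim|y|^{-\gamma}$ on $B_1$ into the first inequality. Then $u(x)\gtrsim\int_{|y|<1}|x-y|^{\gamma-n}|y|^{-\gamma(q_2+1)}dy$. This diverges when $\gamma(q_2+1)\ge n$, which again contradicts $u\in L^\infty_{loc}$.

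\emph{Case $0<q_1q_2<1$.} This is the subcritical case and the step I expect to be the main obstacle. Let $u_R,v_R$ denote the infima of $u,v$ on the annulus $R<|x|<2R$. Restricting the integrals to $B_{2R}$ should give the scale-invariant bounds $v_R\gtrsim u_R^{q_1}$ and $u_R\gtrsim v_R^{q_2}$. Because these bounds are scale-invariant, they alone give no contradiction. I would therefore couple them with the decay bounds $u\gtrsim|x|^{\gamma-n}$, $v\gtrsim|x|^{-n}$ and iterate. At each step one inserts the current power-type lower bound into the integral over the whole region $|y|\lesssim|x|$, not only the annulus. This yields improved exponents $u\gtrsim |x|^{-a_k}$, $v\gtrsim|x|^{-b_k}$ with $a_{k+1}$ an affine function of $q_2 b_k$ and $b_{k+1}$ an affine function of $q_1 a_k$. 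Since $q_1q_2<1$, the composed map is a contraction and the exponents move monotonically toward its fixed point. At that fixed point one of the tail integrals, as in the two cases above, has a nonintegrable power and diverges. I expect the bookkeeping to be delicate, possibly needing logarithmic factors at the borderline steps, in the manner of the Caristi--D'Ambrosio--Mitidieri argument.

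For part (ii) I would construct explicit power solutions $u=A|x|^{-a}$, $v=B|x|^{-b}$. I would use the standard Riesz identity $\int_{\mathbb{R}^n}|x-y|^{\gamma-n}|y|^{-\mu}dy=c(n,\gamma,\mu)|x|^{\gamma-\mu}$, valid for $\gamma<\mu<n$.
\begin{itemize}
\item In the first equation $\mu=bq_2+\gamma$, so the integral converges iff $0<bq_2<n-\gamma$, and the right-hand side is $c\,|x|^{-bq_2}$. This forces $a=bq_2$.
\item In the second equation $\mu=aq_1$, so the integral converges iff $\gamma<aq_1<n$, and the right-hand side is $c\,|x|^{-\gamma+\gamma-aq_1}=c\,|x|^{-aq_1}$. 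This forces $b=aq_1$.
\end{itemize}
The two relations are consistent precisely because $q_1q_2=1$. The constraints reduce to $\gamma<b<\min\{n,(n-\gamma)q_1\}$, and such a $b$ exists iff $q_1>\gamma/(n-\gamma)$, equivalently $q_2<(n-\gamma)/\gamma$. Both sides are then the same powers up to constants, so suitable $A,B$ (or suitable $c_1,c_2$) give super solutions and lower solutions at once. These functions are radially symmetric, decreasing, and lie in $L^\infty_{loc}(\mathbb{R}^n\setminus\{0\})$.
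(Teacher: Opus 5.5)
The gap is the subcritical case $0<q_1q_2<1$. You leave it as an unspecified plan, and your reason for needing an iteration is mistaken: your annulus inequalities already close. Integrate only over $A_R=\{R<|y|<2R\}$. For $x,y\in A_R$ one has $|x-y|\le 4R$ and $|x|,|y|\le 2R$, and $|A_R|$ is a constant times $R^n$. Hence
\[
v_R\ge K\,u_R^{q_1},\qquad u_R\ge K\,v_R^{q_2},
\]
with $K>0$ independent of $R$, because both kernels are homogeneous of total degree $-n$. Chaining gives $u_R\ge K^{1+q_2}u_R^{q_1q_2}$. Positivity of $u_R$ follows from the same computation as your basic bound, and finiteness from $L^\infty_{loc}$. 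Since $q_1q_2<1$, this forces $u_R\ge K^{(1+q_2)/(1-q_1q_2)}=:m>0$ for every $R$. Then $v(x)\ge c\,m^{q_1}|x|^{-\gamma}\int_{|y|>2|x|}|y|^{\gamma-n}dy=\infty$, a contradiction. Scale invariance is what makes this case easy, not what obstructs it.

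The paper does iterate, but in the simplest form. On $B_{|x|/2}(x)$ one has $|x|/2\le|y|\le 3|x|/2$ and $\int_{B_{|x|/2}(x)}|x-y|^{\gamma-n}dy=c|x|^{\gamma}$. So $v(y)\ge c|y|^{-b}$ gives $u(x)\ge c|x|^{-q_2b}$, with no additive shift and no logarithm; symmetrically, $u(y)\ge c|y|^{-a}$ gives $v(x)\ge c|x|^{-q_1a}$. Starting from $a_0=\min\{n-\gamma,q_2n\}$, one gets $a_k=(q_1q_2)^ka_0\to0$. The fixed point $0$ is never reached, but after finitely many steps $q_1a_k<\gamma$. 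The tail integral in the $v$-inequality then diverges exactly as in your case $q_1\le\gamma/(n-\gamma)$. As written, your sketch identifies neither the recursion nor its fixed point, so the claimed divergence is not established.

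The rest is sound. Your cases $q_1\le\gamma/(n-\gamma)$ and $q_2\ge(n-\gamma)/\gamma$ are the paper's arguments. In part (ii) you use the same power pair and exponent window as the paper: your $b\in(\gamma,n)$ with $q_2b<n-\gamma$ is its $\sigma_2\in I_2$. The difference is that you evaluate the integrals exactly, via homogeneity and rotation invariance of $\int_{\mathbb{R}^n}|x-y|^{\gamma-n}|y|^{-\mu}dy$ for $\gamma<\mu<n$. The paper instead uses region-by-region two-sided estimates. Your route is shorter and gives proportionality rather than mere comparability.

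One correction there: since $q_1q_2=1$, rescaling $A,B$ cannot absorb the two Riesz constants $C_1,C_2$; exact equality would require $C_1C_2^{q_2}=1$. So it is $c_1,c_2$ that must be chosen, which the definition of super and lower solutions permits.
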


When $q_1q_2>1$,
we have the following two nonexistence results.

\begin{theorem}\label{th1.6}
Let $n \geq 1$, $\gamma \in (0,n)$ and $\min\{q_1,q_2\}>0$.
If $q_1q_2>1$, then \eqref{j8} has no radially symmetric and decreasing
positive super solution in $L_{loc}^{\infty}(\mathbb{R}^n\setminus\{0\})$.
\end{theorem}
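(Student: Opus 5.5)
Suppose, to the contrary, that $(u,v)$ is a radially symmetric, decreasing, positive super solution of \eqref{j8} in $L_{loc}^{\infty}(\mathbb{R}^n\setminus\{0\})$ with $q_1q_2>1$. The plan is to turn the two integral inequalities into a pair of pointwise lower bounds at a fixed radius $R$, using the monotonicity of $u,v$ on a ball. Iterating these bounds will produce an inequality that forces $u(R)$ or $v(R)$ to be infinite, or a power inequality that fails as $R\to 0$.

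\emph{Step 1 (basic estimates).} Fix $|x|=R$. Restrict the integral defining $u$ to $|y|<R$. There $|x-y|\le 2R$, and since $v$ is decreasing, $v(y)\ge v(R)$. This gives
$$
u(R)\ \ge\ c_1 (2R)^{\gamma-n} v^{q_2}(R)\int_{|y|<R}|y|^{-\gamma}dy \ =\ C R^{\,n-\gamma+\gamma-n}v^{q_2}(R) \ =\ C v^{q_2}(R),
$$
because $\gamma<n$ makes $|y|^{-\gamma}$ integrable near $0$. In the same way,
$$
v(R)\ \ge\ c_2 R^{-\gamma}(2R)^{\gamma-n}u^{q_1}(R)|B_R| \ =\ C R^{-\gamma}\,\cdot R^{\gamma} u^{q_1}(R)\ =\ C u^{q_1}(R).
$$
Both estimates are scale invariant, which reflects that $q_1q_2=1$ is the critical (Sobolev-type) case \eqref{serrin}. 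Combining them gives $u(R)\ge C u^{q_1q_2}(R)$. Since $q_1q_2>1$, this yields the uniform bound $u(R)\le C$ for all $R>0$, and likewise $v\le C$.

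\emph{Step 2 (a lower bound from the tail).} Next use the part of the integral with $|y|>2R$, where $|x-y|\simeq |y|$. The bound $u(R)\ge c\int_{|y|>2R}v^{q_2}(y)|y|^{-n}dy$ together with the boundedness of $u$ forces $\int^\infty v^{q_2}(r)r^{-1}dr<\infty$. On the other hand, the second inequality at large $|x|$ gives
$$
v(x)\ \ge\ c|x|^{-\gamma}\int_{|y|<|x|}|x-y|^{\gamma-n}u^{q_1}(y)dy\ \ge\ c|x|^{-n}\int_{|y|<1}u^{q_1}(y)\,dy,
$$
and here $u\ge u(1)>0$ on the unit ball by monotonicity. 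So $v(x)\ge c|x|^{-n}$. Feeding this into the first inequality gives $u(x)\ge c\int_{|y|<|x|}|x|^{\gamma-n}|y|^{-\gamma-nq_2}dy$ for small $|x|$, which is a power-type blow-up as $x\to0$ when $\gamma+nq_2\ge n$. In general I would instead bootstrap near $0$: from $u\ge c$ on $B_1$ we get $v(x)\ge c|x|^{-\gamma}$ for $|x|<1$. Inserting this into the first inequality gives $u(x)\ge c|x|^{\gamma-n}\int_{|y|<|x|}|y|^{-\gamma-\gamma q_2}dy$, which either diverges (contradicting $u\in L^\infty_{loc}(\mathbb{R}^n\setminus\{0\})$) or gives $u\gtrsim |x|^{-\gamma q_2}$. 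Iterating yields $u\gtrsim |x|^{-a_k}$ with exponents $a_k$ growing geometrically, since $q_1q_2>1$. This contradicts the bound $u\le C$ from Step 1, and it does so as soon as any $a_k>0$.

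\emph{Main obstacle.} The delicate point is tracking the exponents in the iteration of Step 2. One must check that the recursion $a\mapsto$ (new exponent) stays in the range where the integrals over $B_{|x|}$ converge, or else diverge (which also gives a contradiction), and that it really produces a positive exponent. Failing that, Step 1 alone must suffice. There I would first check that the constants $C$ in Step 1 are uniform in $R$. That uniformity, combined with the reverse argument at small $R$, namely $v(R)\ge C u^{q_1}(R)\ge C'v^{q_1q_2}(R)$, shows that $u$ and $v$ are bounded above and below by constants. A bounded $u\ge c>0$ then already gives $v(x)\ge c|x|^{-\gamma}$ near $0$, contradicting the boundedness of $v$. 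This final observation is what I expect to close the argument cleanly.
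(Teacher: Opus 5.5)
Your argument is correct and is essentially the paper's proof. The scale-invariant bounds $u(R)\ge C v^{q_2}(R)$ and $v(R)\ge C u^{q_1}(R)$, taken over $B_R(0)$ using monotonicity, give $v\le C$ because $q_1q_2>1$; then $u\ge u(1)>0$ on $B_1(0)$ yields $v(x)\ge c|x|^{-\gamma}$ as $x\to 0$, which is a contradiction. The only difference is that the paper gets $u\ge c$ near $0$ from $v(2)$ and the equation rather than directly from monotonicity, which is inessential.

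You should discard the rest of Step 2: the tail estimate, the bound $v\ge c|x|^{-n}$ (valid only for large $|x|$, so it cannot be inserted at small $|x|$), and the exponent iteration are not needed. Also, ``bounded below by constants'' is justified only in the local sense $u\ge u(1)$ on $B_1(0)$.
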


\begin{theorem}\label{th1.7}
Let $n \geq 1$, $\gamma \in (1,n)$ and $\min\{q_1,q_2\}>0$, and
positive functions $u$ and $v$ be differentiable in
$\mathbb{R}^n \setminus \{0\}$ and solve \eqref{j8}.
Assume either

(i) $(u,v) \in [L^{q_1+1}(\mathbb{R}^n)
\cap L^{s}(\mathbb{R}^n)] \times L^{q_2+1}(\mathbb{R}^n)$
for some $s>nq_1/(\gamma-1)$, or

(ii) $(u,v) \in L^{q_1+1}(\mathbb{R}^n)
\times [L^{q_2+1}(\mathbb{R}^n) \cap L^{s}(\mathbb{R}^n)]$
for some $s>nq_2/(\gamma-1)$.

Then $q_1q_2=1$.
\end{theorem}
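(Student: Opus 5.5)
The plan is to derive a Pohozaev-type identity from the scaling invariance of the kernel in \eqref{j8}, and then to compare it with an energy identity.

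\emph{Step 1 (energy identity).} Multiply the first equation of \eqref{j8} by $u^{q_1}(x)$ and integrate in $x$. Multiply the second by $v^{q_2}(x)$ and integrate. Fubini (all integrands are nonnegative) gives
$$
\int_{\mathbb{R}^n}u^{q_1+1}dx=\int_{\mathbb{R}^n}\int_{\mathbb{R}^n}\frac{u^{q_1}(x)v^{q_2}(y)}{|x-y|^{n-\gamma}|y|^{\gamma}}dxdy=\int_{\mathbb{R}^n}v^{q_2+1}dy=:I .
$$
Here $I<\infty$ because $u\in L^{q_1+1}$ and $v\in L^{q_2+1}$.

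\emph{Step 2 (scaling identity).} For $\lambda>0$, substitute $y=\lambda z$ in the first equation. The factors $\lambda^{\gamma-n}\lambda^{-\gamma}\lambda^{n}$ cancel, so
$$
u(\lambda x)=\int_{\mathbb{R}^n}\frac{v^{q_2}(\lambda z)}{|x-z|^{n-\gamma}|z|^{\gamma}}dz .
$$
Differentiating in $\lambda$ at $\lambda=1$ gives
$$
x\cdot\nabla u(x)=\int_{\mathbb{R}^n}\frac{z\cdot\nabla(v^{q_2})(z)}{|x-z|^{n-\gamma}|z|^{\gamma}}dz .
$$
Multiply by $u^{q_1}(x)$, integrate over $x$, and use Fubini together with the second equation of \eqref{j8}, i.e. $\int u^{q_1}(x)|x-z|^{\gamma-n}|z|^{-\gamma}dx=v(z)$. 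This yields
$$
\int_{\mathbb{R}^n} u^{q_1}\,x\cdot\nabla u\,dx=q_2\int_{\mathbb{R}^n}v^{q_2}\,z\cdot\nabla v\,dz .
$$
Integrating by parts on both sides, the left side equals $-\frac{n}{q_1+1}I$ and the right side equals $-\frac{nq_2}{q_2+1}I$. Since $I>0$, we get $\frac1{q_1+1}=\frac{q_2}{q_2+1}$, which is exactly $q_1q_2=1$, the conclusion of Theorem \ref{th1.7}.

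\emph{Step 3 (justification; the main obstacle).} The real work is making Step 2 rigorous. This requires three things:
\begin{itemize}
\item absolute convergence of $\int u^{q_1}|x||\nabla u|dx$, so that Fubini applies to the differentiated identity;
\item differentiation under the integral sign;
\item vanishing of the boundary terms in the integration by parts.
\end{itemize}

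For the first point, differentiate the kernel to get
$$
|x||\nabla u(x)|\le C\int_{\mathbb{R}^n}\frac{v^{q_2}(y)\,(|x|+|y|)}{|x-y|^{n-\gamma+1}|y|^{\gamma}}dy .
$$
Split $|x|\le|x-y|+|y|$. One resulting piece is controlled by $u$ itself. The other is a Stein--Weiss/HLS-type potential with exponent $n-(\gamma-1)$, which is admissible precisely because $\gamma>1$. In case (i), Hölder with $u^{q_1}\in L^{s/q_1}$ combined with the Stein--Weiss inequality \eqref{888} (the condition $s>nq_1/(\gamma-1)$ is what makes the exponents match) gives $u^{q_1}|x||\nabla u|\in L^1$. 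Case (ii) is symmetric: run the same argument starting from the second equation, with the roles of $(u,q_1)$ and $(v,q_2)$ exchanged.

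For the boundary terms, write $u^{q_1}x\cdot\nabla u=\frac{1}{q_1+1}x\cdot\nabla(u^{q_1+1})$ and integrate over the annulus $\{\varepsilon<|x|<R\}$. The boundary contributions are $R\int_{|x|=R}u^{q_1+1}d\sigma$ and $\varepsilon\int_{|x|=\varepsilon}u^{q_1+1}d\sigma$. Since $u^{q_1+1}\in L^1$, both tend to zero along suitable sequences $R_j\to\infty$ and $\varepsilon_j\to0$. The same argument handles $v$. Local differentiability on $\mathbb{R}^n\setminus\{0\}$ is assumed in the statement, so it only remains to handle the $\lambda$-derivative, using dominated convergence with the same majorant as above.
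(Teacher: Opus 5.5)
The formal computation in your Steps 1--2 is the paper's Pohozaev argument; your Step 2 is exactly the identity the paper uses in case (ii). It correctly yields $\frac{1}{q_1+1}=\frac{q_2}{q_2+1}$. The gap is in Step 3, where everything reduces to finiteness of
\[
J:=\int_{\mathbb{R}^n}\int_{\mathbb{R}^n}\frac{u^{q_1}(x)\,v^{q_2}(y)}{|x-y|^{n-\gamma+1}|y|^{\gamma-1}}\,dx\,dy .
\]
First, $\int u^{q_1}|x||\nabla u|\,dx\le(n-\gamma)(I+J)$. Second, $J$ is also what your Fubini step actually needs. Swapping the integrals in $\int u^{q_1}(x)\int|x-z|^{\gamma-n}|z|^{-\gamma}\,z\cdot\nabla v^{q_2}(z)\,dz\,dx$ requires $\int v^{q_2}|z\cdot\nabla v|\,dz<\infty$, not $\int u^{q_1}|x||\nabla u|\,dx<\infty$. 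Differentiating the kernel of the second equation bounds the former by $\gamma I+(n-\gamma)J$.

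The kernel of $J$ is homogeneous of degree $-n$. By dilation invariance, Stein--Weiss can therefore only give $J\le C\|u^{q_1}\|_{L^p}\|v^{q_2}\|_{L^{p'}}$ with $\frac1p+\frac1{p'}=1$. In case (i), $v^{q_2}$ is known only in $L^{(q_2+1)/q_2}$, so you need $u^{q_1}\in L^{q_2+1}$. H\"older with $u^{q_1}\in L^{s/q_1}$ matches only if $s=q_1(q_2+1)$, so $s>nq_1/(\gamma-1)$ is not what makes the exponents fit. What works is interpolating $u\in L^{q_1+1}\cap L^s$ to get $u\in L^{q_1(q_2+1)}$. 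This requires $q_1+1\le q_1(q_2+1)\le s$, together with the admissibility condition $(\gamma-1)(q_2+1)<n$. These hold only after you invoke Theorem \ref{th10}(i), which forces $q_1q_2\ge1$ and $q_2<(n-\gamma)/\gamma$. The paper uses it (see \eqref{Nec} and \eqref{Nec2}); you never do. So case (i) is repairable, but not as written.

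Case (ii) is not symmetric, and here your route actually breaks. In \eqref{j8} the weight $|\cdot|^{-\gamma}$ sits on the integration variable in the first equation and on the outer variable in the second, so $(u,q_1)\leftrightarrow(v,q_2)$ is not a symmetry. Running your argument from the second equation gives, after relabeling, the same $J$. Now, however, $u^{q_1}$ is only in $L^{(q_1+1)/q_1}$, so you need $v\in L^{q_2(q_1+1)}$, i.e.\ $s\ge q_2(q_1+1)$. Nothing in the hypotheses provides this, since Theorem \ref{th10}(i) bounds $q_1$ only from below. For example, with $n=3$ and $\gamma=2$ a solution forces $q_1>2$, hence $q_2(q_1+1)>3q_2=nq_2/(\gamma-1)$. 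Every $s\in(3q_2,q_2(q_1+1))$ is then allowed by hypothesis (ii), yet the Stein--Weiss bound on $J$ is unavailable.

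The paper uses the $L^s$ hypothesis in a purely local way instead. It pairs case (i) with the scaling identity for $v$ and case (ii) with the one for $u$, the opposite of your assignment. It integrates by parts to move the derivative onto the kernel. It then uses $u^{q_1}\in L^{s/q_1}$ (resp.\ $v^{q_2}\in L^{s/q_2}$) only near the singularity $z=x$, where $|x-z|^{\gamma-n-1}\in L^{(s/q_i)'}$ is exactly the condition $s>nq_i/(\gamma-1)$. The boundary and far-field terms are controlled by $L^{q_i+1}$ together with \eqref{Nec}, \eqref{Nec2}.
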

	
The radial symmetry and the integrability of positive solutions
of \eqref{j8} seem difficult to be obtained as in
the study of \eqref{889}, because the Herbst inequality \eqref{BT2}
(or \eqref{BT1}) does not work in lifting the regularity when we use the contraction argument
as in \cite{CJLL,CLO-CPDE,JL,JL2}.
We expect to obtain the radial symmetry of those solutions by the same
argument in \cite{LZ} where a modified version of the method of moving planes in integral form
(introduced in \cite{DGZ}) was applied.
We also expect $(u,v) \in L^{s_1}(\mathbb{R}^n) \times L^{s_2}(\mathbb{R}^n)$
for all $s_i >nq_i/(\gamma-1)$ $(i=1,2)$
when $(u,v) \in L^{q_1+1}(\mathbb{R}^n) \times L^{q_2+1}(\mathbb{R}^n)$,
which was proved for the Hardy-Littlewood-Sobolev system in \cite{CJLL,JL2}.
Once this lifting integrability holds, Theorem \ref{th1.7} shows
that there is no positive differentiable solution in
$L^{q_1+1}(\mathbb{R}^n) \times L^{q_2+1}(\mathbb{R}^n)$ when
$q_1q_2>1$.

\section{Proof of Theorem \ref{Rth1}}
	
		The following three lemmas will be used here.
	
	\begin{lemma}\label{Rle1} (Reversed Young's inequality, Lemma 2.1 in \cite{DZ}, Lemma 10 in \cite{CLLT})
		Let $G$ be a locally compact group with left Haar measure $\mu$ that satisfies
		$\mu(A)=\mu(A^{-1})$ for all measurable sets $A\subseteq G$. Assume that $p, q$ and $s$ satisfy
		$$
		0<p<1, \quad \max\{q,s\}<0, \quad \frac{1}{q} +1=\frac{1}{p}+\frac{1}{s}.
		$$
		Then for any non-negative $g\in L^p(G,\mu)$ and $h\in L^s(G,\mu)$, we have
		$$
		\|g\ast h\|_{L^q(G,\mu)}\geq \|g\|_{L^p(G,\mu)}\|h\|_{L^s(G,\mu)},
		$$
		where
		$(g\ast h)(x)=\int_Gg(y)h(y^{-1}x)d\mu(y).$
		Here $y^{-1}$ denotes the inverse of $y$ in the group $G$.
	\end{lemma}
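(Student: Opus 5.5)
The plan is to adapt the classical three-factor H\"older proof of Young's inequality, replacing H\"older's inequality by its reversed form and tracking every sign change caused by the negative exponents $q$ and $s$. The tool is the \emph{reversed H\"older inequality}: if $0<a<1$ and $a'=a/(a-1)<0$, then $\int FG\,d\mu\geq \|F\|_{L^a}\|G\|_{L^{a'}}$ for nonnegative $F,G$. It follows from the ordinary H\"older inequality applied to $F^aG^a$ and $G^{-a}$ with exponents $1/a$ and $1/(1-a)$. Iterating it once gives a three-function version. If $r_1,r_3<0$, $r_2\in(0,1)$ and $1/r_1+1/r_2+1/r_3=1$, then
$$\int F_1F_2F_3\,d\mu\geq \|F_1\|_{L^{r_1}}\|F_2\|_{L^{r_2}}\|F_3\|_{L^{r_3}}.$$
To see this, first apply the two-function version to $F_2\cdot(F_1F_3)$ with exponents $r_2$ and $r_2'<0$. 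Then bound $\|F_1F_3\|_{L^{r_2'}}$ from below by H\"older's inequality with negative exponents. After raising to the power $r_2'<0$, this is the ordinary H\"older inequality for $F_1^{r_2'}F_3^{r_2'}$ with exponents $r_1/r_2'$ and $r_3/r_2'$, both positive and summing to reciprocal $1$.

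Fix $x$ and write
$$g(y)h(y^{-1}x)=F_1F_2F_3,\qquad F_1=\bigl(g(y)^p h(y^{-1}x)^s\bigr)^{1/q},\quad F_2=g(y)^{p(1-1/s)},\quad F_3=h(y^{-1}x)^{s(1-1/p)}.$$
This is a valid factorization because $p/q+p(1-1/s)=1$ and $s/q+s(1-1/p)=1$, both by the relation $1/q+1=1/p+1/s$. We take exponents $r_1=q<0$, $r_2=s'=s/(s-1)\in(0,1)$ and $r_3=p'=p/(p-1)<0$. These satisfy $1/q+(1-1/s)+(1-1/p)=1$, again by the hypothesis. Two norms are then explicit. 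First, $\|F_2\|_{L^{s'}}=\|g\|_{L^p}^{p(1-1/s)}$. Second, $\|F_3\|_{L^{p'}}=\bigl(\int h(y^{-1}x)^s d\mu(y)\bigr)^{1-1/p}=\|h\|_{L^s}^{s(1-1/p)}$. For the second we use left invariance together with $\mu(A)=\mu(A^{-1})$, which gives $\int \phi(y^{-1}x)\,d\mu(y)=\int\phi\,d\mu$. Hence
$$(g\ast h)(x)\geq \Bigl(\int g(y)^ph(y^{-1}x)^s\,d\mu(y)\Bigr)^{1/q}\|g\|_{L^p}^{p(1-1/s)}\|h\|_{L^s}^{s(1-1/p)}.$$

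Next we raise this to the power $q<0$, which reverses the inequality, and integrate in $x$. By Tonelli and left invariance in $x$, we get $\int\!\int g(y)^ph(y^{-1}x)^s\,d\mu(x)\,d\mu(y)=\|g\|_{L^p}^p\|h\|_{L^s}^s$. This yields
$$\|g\ast h\|_{L^q}^q\leq \|g\|_{L^p}^{p+pq(1-1/s)}\|h\|_{L^s}^{s+sq(1-1/p)}.$$
Both exponents equal $q$, since $p+pq(1-1/s)=pq(1/q+1-1/s)=pq/p=q$, and symmetrically for $s$. Taking the power $1/q<0$ reverses the inequality once more and gives $\|g\ast h\|_{L^q}\geq\|g\|_{L^p}\|h\|_{L^s}$.

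The main obstacle is not the algebra but justifying the steps with degenerate values. A function in $L^s$ with $s<0$ must satisfy $h>0$ a.e., but $g$ may vanish and $g\ast h$ could be infinite. The reversed H\"older inequalities must therefore be read with the conventions $0^{a}=\infty$ for $a<0$ and $\infty^{a}=0$. I would first prove everything for $g,h$ bounded above and below by positive constants on finite-measure sets, so that all integrals are finite and positive. Then I would pass to the general case by monotone convergence: truncate $g$ from above, and truncate $h$ from above and away from zero. If $\|g\ast h\|_{L^q}=\infty$ or $g\equiv0$, the statement is trivial, so these cases can be disposed of first.
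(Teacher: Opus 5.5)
The paper gives no proof of this lemma; it only cites Dou--Zhu and Chen et al. Your argument is the standard route to it: the three-factor H\"older proof of Young's inequality, with reversed H\"older in place of H\"older. Your factorization, the exponents $r_1=q$, $r_2=s'\in(0,1)$, $r_3=p'<0$, the use of $\mu(A)=\mu(A^{-1})$ to get $\int h(y^{-1}x)^s\,d\mu(y)=\|h\|_{L^s}^s$, and the final exponent count are all correct.

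The one thing to fix is your last paragraph: the regularization you describe fails, and it is not needed.

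\emph{Why it fails.} Suppose $\mu(G)=\infty$, as in the paper's applications with $d\mu=dx/|x|$. Then $\min(h,M)^s\ge M^s$ everywhere, so $\|\min(h,M)\|_{L^s}=0$. Likewise, setting $h=0$ off a finite-measure set gives $\|h\|_{L^s}=0$. So truncating $h$ from above makes the inequality vacuous, and nothing passes to the limit.

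\emph{The actual fix.} The only real degeneracy is the factorization on $\{g=0\}$, where $F_1=\infty$ and $F_2=0$. Handle it by applying the three-function inequality on $E=\{g>0\}$, where all three factors are finite and positive a.e.
\begin{itemize}
\item $\|F_2\|_{L^{s'}(E)}=\|g\|_{L^p}^{p(1-1/s)}$.
\item $\|F_1\|_{L^q(E)}$ is unchanged, since $g^p=0$ off $E$.
\item Since $\int_E h(y^{-1}x)^s\,d\mu(y)\le\|h\|_{L^s}^s$ and $1-1/p<0$, you get $\|F_3\|_{L^{p'}(E)}\ge\|h\|_{L^s}^{s(1-1/p)}$, which is the direction you need.
\end{itemize}
Points $x$ where $(g\ast h)(x)=\infty$ or $\int g(y)^p h(y^{-1}x)^s\,d\mu(y)=\infty$ satisfy the bound obtained after raising to the power $q$ trivially. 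The rest of your argument then goes through with no approximation.
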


By the same proof of Lemma 1 in \cite{NN}
(or Lemma 11 in \cite{CLLT}), we also have the following lemma.

\begin{lemma}\label{lem2.3}
		
		(i) For $p,q\in (0,1)$, $\alpha>n$ and any non-negative functions $g(x)$ and $h(x)$ defined on $\mathbb{R}^n$, we have
		\begin{equation}\label{5000}
			\int_{\mathbb{R}^n}\int_{\mathbb{R}^n}|x-y|^{\alpha/q'-n}g(x)h(y)|y|^{\alpha/q'}dxdy
			\geq \int_{\mathbb{R}^n}\int_{\mathbb{R}^n}|x-y|^{\alpha/q'-n}g^*(x)h^*(y)|y|^{\alpha/q'}dxdy,
		\end{equation}
		where $g^*$ and $h^*$ are rearrangements of $g$ and $h$.
		
		(ii) If $h$ is radically decreasing, then $V_{\alpha}(h)$ is radically increasing.
		
		(iii) For any non-negative function $h\in L^p(\mathbb{R}^n)$, there holds
		$$
		\|V_{\alpha}(h)\|_{L^q(\mathbb{R}^n)}\geq \|V_{\alpha}(h^*)\|_{L^q(\mathbb{R}^n)}.
		$$
	\end{lemma}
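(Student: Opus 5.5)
The plan is to prove (i) by a layer-cake decomposition that reduces everything to indicator kernels, to which the classical rearrangement inequalities apply. Then (iii) follows from (i) by reverse H\"older duality. Part (ii) is a separate monotonicity argument.

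\textbf{Part (i).} Set $a=\alpha/q'-n$ and $b=\alpha/q'$. Since $q'\in(0,1)$ and $\alpha>n$, we have $b>n$ and $a>0$, so both $|z|^a$ and $|y|^b$ are radially \emph{increasing}. By monotone convergence in both $g$ and $h$, we may first assume $g,h$ are bounded with compact support. Writing $|z|^a=\int_0^\infty\chi_{B_{t^{1/a}}^c}(z)\,dt$, and similarly for $|y|^b$, Fubini turns the left side of \eqref{5000} into
$$
\int_0^\infty\!\!\int_0^\infty \Phi_{r,s}(g,h)\,dt\,d\tau,\qquad
\Phi_{r,s}(g,h)=\int_{\mathbb{R}^n}\int_{\mathbb{R}^n} g(x)h(y)\,\chi_{B_r^c}(x-y)\,\chi_{B_s^c}(y)\,dx\,dy .
$$
It then suffices to show $\Phi_{r,s}(g,h)\ge\Phi_{r,s}(g^*,h^*)$ for every $r,s>0$. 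Expanding $\chi_{B_r^c}(x-y)\chi_{B_s^c}(y)=1-\chi_{B_s}(y)-\chi_{B_r}(x-y)+\chi_{B_s}(y)\chi_{B_r}(x-y)$ gives four terms, treated as follows.
\begin{itemize}
\item The constant term $\|g\|_1\|h\|_1$ is invariant under rearrangement.
\item The term $-\|g\|_1\int h\chi_{B_s}$ can only decrease, by the Hardy--Littlewood inequality.
\item The term $-\iint gh\,\chi_{B_r}(x-y)$ can only decrease, by the Riesz rearrangement inequality.
\item The mixed term $+\iint g(x)h(y)\chi_{B_r}(x-y)\chi_{B_s}(y)$ increases under symmetrization, by the Brascamp--Lieb--Luttinger inequality. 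This is the wrong direction.
\end{itemize}
The main obstacle is therefore to absorb this mixed term. I would regroup it with the Riesz term, writing
$$
-\iint g(x)h(y)\,\chi_{B_r}(x-y)\,\chi_{B_s^c}(y)\,dx\,dy .
$$
I would then try to show that
$$
\iint g(x)h(y)\,\chi_{B_r}(x-y)\,\chi_{B_s^c}(y)\,dx\,dy+\|g\|_1\int h\,\chi_{B_s}
$$
is maximized by symmetric decreasing functions. The idea is to view this as a functional of $h$ with a monotone truncation in the level of $h$, following the argument of Lemma~1 in \cite{NN}.

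If this regrouping fails, my fallback is a different route. I would reduce to Steiner symmetrization in one direction (a two-point rearrangement) and check directly the two-point inequality for the product kernel $|x-y|^a|y|^b$. Increasing radial kernels of this product form behave well under polarization across hyperplanes not through the origin, because reflecting across a hyperplane $H$ with $0$ on the positive side decreases $|y|$ on that side.

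\textbf{Part (ii).} Fix $|x'|>|x|$ and choose a reflection $\sigma$ across a hyperplane with $\sigma x'=x$ and $0$ on the side of $x$. Pair each $y$ with $\sigma y$ in the integral defining $V_\alpha(h)$. Since $h$ is radially decreasing and $h(y)|y|^b$ is paired with $h(\sigma y)|\sigma y|^b$, one compares $|x-y|^a$ and $|x'-y|^a=|x-\sigma y|^a$ on the two half-spaces. Alternatively, one can reuse the layer-cake form above with $\chi_{B_r^c}(x-y)$ and Anderson-type monotonicity in $x$ of $\int h\chi_{B_s^c}\chi_{B_r^c}(x-\cdot)$. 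Either way, this yields $V_\alpha(h)(x')\ge V_\alpha(h)(x)$.

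\textbf{Part (iii).} Here $q<0$. By the reversed H\"older inequality, for positive $F$,
$$
\|F\|_{L^q}=\inf\Big\{\int Fg:\ g\ge0,\ \|g\|_{L^{q'}}=1\Big\}.
$$
Apply this with $F=V_\alpha(h)$. For each admissible $g$, part (i) gives $\int V_\alpha(h)g\ge\int V_\alpha(h^*)g^*\ge\|V_\alpha(h^*)\|_{L^q}$, since $\|g^*\|_{q'}=1$. Taking the infimum over $g$ gives (iii).
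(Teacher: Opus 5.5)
Your reduction of (i) to $\Phi_{r,s}(g,h)\ge\Phi_{r,s}(g^*,h^*)$ for each fixed $(r,s)$ asks for something false, and the regrouping does not change this. Since $\Phi_{r,s}(g,h)=\|g\|_1\|h\|_1-\bigl[\|g\|_1\int h\chi_{B_s}+\iint g(x)h(y)\chi_{B_r}(x-y)\chi_{B_s^c}(y)\,dx\,dy\bigr]$, saying the bracket is maximized by symmetric decreasing functions is the same claim. Here is a counterexample with $n=1$, $s=1$, $r=2$, $h=\chi_{(-1,3)}$, $g=\chi_{(1,3)}$. The part of $h$ outside $B_1$ sits on $[1,3)$, where $|x-y|<2$ for all $x\in(1,3)$, so $\Phi_{2,1}(g,h)=0$. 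But $h^*=\chi_{(-2,2)}$, $g^*=\chi_{(-1,1)}$ give $\Phi_{2,1}(g^*,h^*)=1$. The same construction works in every dimension: collect the part of $\{h>0\}$ outside $B_s$ into one ball and center $g$ on it.

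The polarization fallback fails for the same reason. Take $x,y$ in a half-space $H\ni0$ with $h(y)=h(\sigma y)$ and $g(x)<g(\sigma x)$. Polarizing $g$ changes the four-point sum by $h(y)\,(g(\sigma x)-g(x))\,(|x-\sigma y|^a-|x-y|^a)\,(|\sigma y|^b-|y|^b)\ge0$, i.e. it \emph{increases} the functional. The weight makes $h|y|^b$ heavier at the far point $\sigma y$, and $g$ follows it. The same sign problem sinks (ii). In your reflection argument $h(y)|y|^b-h(\sigma y)|\sigma y|^b$ has no sign. For fixed $(r,s)$ the function $x\mapsto\int h\chi_{B_s^c}\chi_{B_r^c}(x-\cdot)$ is not radially increasing either: for $h=\chi_{B_\rho}$, $r<s<\rho$, it is maximal at $x=0$. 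So there is no Anderson-type monotonicity to invoke.

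This cannot be repaired inside your framework, which never uses the dimension, because the statement fails for $n=1$. With $\alpha/q'=3$ (e.g. $\alpha=2.7$, $q'=0.9$) the kernel is $(x-y)^2|y|^3$. For $h=\chi_{(-0.8,1.2)}$ and $g$ an approximate identity at $x=1/2$, the left side of \eqref{5000} tends to $\int_{-0.8}^{1.2}(1/2-y)^2|y|^3\,dy\approx0.264$. The right side tends to $\int_{-1}^{1}|y|^5\,dy=1/3$. Likewise (ii) fails for $n=1$ when $0<\alpha/q'-1<1$: with $a=0.1$, $b=1.1$, $h=\chi_{(-1,1)}$ one has $V_\alpha(h)(0)=10/11$, while Jensen's inequality gives $V_\alpha(h)(1/2)<0.9$.

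For $n\ge2$, (ii) does hold, but by a different mechanism. $V_\alpha(h)(x)$ is a nonnegative superposition over $\rho$ of the means of the subharmonic function $z\mapsto|z-\rho e_1|^a$ over the sphere $|z|=|x|$ (note $\Delta|w|^a=a(a+n-2)|w|^{a-2}\ge0$). These means increase with $|x|$, and only radiality of $h$ is used.

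So any proof of (i) must be genuinely dimension-sensitive. The single-kernel layer-cake scheme you are imitating works because there $\chi_{B_r^c}=1-\chi_{B_r}$ and Riesz suffice; it cannot absorb the product of two increasing factors. The paper itself supplies no argument beyond citing that scheme. Your derivation of (iii) from (i) via reversed H\"older duality is fine.
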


	\begin{lemma}\label{Rle2} (\cite{CLLT}, Lemma 12)
		Let $x\in S^{n-1}=\{x\in \mathbb{R}^n:|x|=1\}$, and denote
		$$
		I(x)=\int_{S^{n-1}}\varphi (x\cdot y)dy.
		$$
		Then $I(x)$ is a constant independent of $x$ and
		$$
		I(x)=\omega_{n-2}\int_{-1}^{1}\varphi(t)(1-t^2)^{\frac{n-3}{2}}dt,
		$$
		where $\omega_{n-2}$ denotes the area of $S^{n-2}$.
	\end{lemma}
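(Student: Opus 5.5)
We first show that $I(x)$ does not depend on $x$, using only that surface measure on $S^{n-1}$ is invariant under orthogonal maps. Then we evaluate $I$ at the north pole $e_n=(0,\dots,0,1)$ by slicing the sphere along the level sets $\{y_n=t\}$. We assume throughout that $\varphi$ is measurable and that $\varphi(x\cdot y)$ is integrable over $S^{n-1}$; for nonnegative $\varphi$ everything holds in $[0,\infty]$ by Tonelli.

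\emph{Step 1 (independence of $x$).} Let $x,x'\in S^{n-1}$ and choose an orthogonal matrix $R$ with $R^{T}x=x'$. The surface measure $dy$ on $S^{n-1}$ is invariant under $y\mapsto Ry$, so
$$
I(x)=\int_{S^{n-1}}\varphi(x\cdot Ry)\,dy=\int_{S^{n-1}}\varphi(R^{T}x\cdot y)\,dy=I(x').
$$
Hence it suffices to compute $I(e_n)=\int_{S^{n-1}}\varphi(y_n)\,dy$.

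\emph{Step 2 (computing $I(e_n)$ for $n\ge 2$).} Split $S^{n-1}$ into the open upper and lower hemispheres; the equator $\{y_n=0\}$ has surface measure zero. Treat the upper hemisphere as the graph $y_n=\sqrt{1-|y'|^2}$ over the unit ball $\{|y'|<1\}\subset\mathbb{R}^{n-1}$. The standard graph formula gives
$$
dy=\sqrt{1+|\nabla y_n|^2}\,dy'=\frac{dy'}{y_n}.
$$
Now pass to polar coordinates $y'=r\omega$ with $\omega\in S^{n-2}$, so that $dy'=r^{n-2}\,dr\,d\omega$. Substitute $t=y_n=\sqrt{1-r^2}$, so $r\,dr=-t\,dt$ and $r=(1-t^2)^{1/2}$. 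Then
$$
\frac{r^{n-2}\,dr}{t}=r^{n-3}\,|dt|=(1-t^2)^{\frac{n-3}{2}}\,dt .
$$
The integrand $\varphi(t)$ does not depend on $\omega$, so integrating over $\omega$ gives the factor $\omega_{n-2}$. The upper hemisphere therefore contributes $\omega_{n-2}\int_0^1\varphi(t)(1-t^2)^{\frac{n-3}{2}}dt$. The lower hemisphere is handled the same way with $y_n=-\sqrt{1-|y'|^2}$ and gives the integral over $(-1,0)$. Adding the two yields the stated formula.

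\emph{Obstacles and edge cases.} The only point needing care is the change of variables, which has singular Jacobian factors at the poles ($r=0$) and at the equator ($t=0$). The substitutions are diffeomorphisms on the open sets $0<r<1$, $0<t<1$. The omitted sets are null, so monotone convergence (first for $\varphi\ge0$, then for general integrable $\varphi$ by splitting into positive and negative parts) justifies every step. For $n=2$ the weight is $(1-t^2)^{-1/2}$, which is integrable, and $\omega_0=2$ counts the two points of $S^0$; the computation goes through unchanged. The case $n=1$ is degenerate: $S^{0}=\{\pm1\}$ and $I(x)=\varphi(1)+\varphi(-1)$. There the formula is to be read with the usual convention, and this case is not needed for the applications.
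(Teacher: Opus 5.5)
Your argument is correct. The paper does not prove this lemma itself; it quotes it from \cite{CLLT}, so there is no in-paper proof to compare against. Your proof is the standard derivation of this identity, which is the $k=0$ case of the Funk--Hecke formula. You use rotation invariance to reduce to $x=e_n$, then the graph parametrization of each open hemisphere with $dy=dy'/|y_n|$, then polar coordinates in $\mathbb{R}^{n-1}$. Finally, the substitution $t=\pm\sqrt{1-r^2}$ turns $r^{n-2}\,dr/|t|$ into $(1-t^2)^{\frac{n-3}{2}}\,dt$. The Jacobian computations all check out, including $n=2$ with $\omega_0=2$.

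Your caveat about $n=1$ is right, and you can state it more firmly instead of appealing to a ``usual convention''. There $\omega_{-1}$ is undefined and the weight $(1-t^2)^{-1}$ is not integrable at $t=\pm1$, so the formula only makes sense for $n\ge 2$. That is exactly where the paper uses it: Case 2 of the proof of Theorem~\ref{Rth1}, with $n=1$ handled directly in Case 1. Since the paper applies the lemma only to a nonnegative $\varphi$, your Tonelli version already covers that use.
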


	\textbf{Proof of Theorem \ref{Rth1}.}
	
\begin{proof}
		Set
		$$
		\tilde{h}(y):=|y|^{\alpha/q'}h(y), \quad
(T\tilde{h})(x):=\int_{\mathbb{R}^n}\tilde{h}(y)|x-y|^{\alpha/q'-n}dy.
		$$	
	
Now, we prove Theorem \ref{Rth1} in two cases.

		{\it Case 1:} $n=1$.

		By the reversed H\"older inequality, we have
		$$
		\int_{\mathbb{R}}\int_{\mathbb{R}}|x-y|^{\alpha/q'-n}|y|^{\alpha/q'}g(x)h(y)dxdy
		\geq \|T\tilde{h}\|_{L^q(\mathbb{R})}\|g\|_{L^{q'}(\mathbb{R})}.
		$$
Once there holds
\begin{equation}\label{e19}
			\|T\tilde{h}\|_{L^q(\mathbb{R})}\geq C_{1,\alpha, p, q'}\||x|^{-\alpha/q'}\tilde{h}\|_{L^p(\mathbb{R})},
		\end{equation}
		We can deduce \eqref{500} immediately.

		According to the left Haar measure, \eqref{e19} is equivalent to the following inequality	
		\begin{equation}\label{hhn}
\||x|^{1/q}T\tilde{h}\|_{L^q(\mu)}\geq C_{1,\alpha, p, q'}\||x|^{-\alpha/q'+1/p}\tilde{h}\|_{L^p(\mu)},
\end{equation}
		where $d\mu(x)=|x|^{-1}dx$.
		Since \eqref{CRI} is reduced to
		\begin{equation}\label{k1}
			\frac{1}{p}+\frac{1}{q'}-\frac{2\alpha}{q'}=1,
		\end{equation}
		we obtain
		$$		
|x|^{1/q}T\tilde{h}(x)
=\int_{-\infty}^{\infty}\frac{|x|^{1/q}\tilde{h}(y)|y|^{-\alpha/q'+1/p}}
{|y|^{1/q}|1-\frac{x}{y}|^{1-\alpha/q'}}\frac{dy}{|y|}
		=(w\ast f)(x),
		$$
		where $w(x)=|x|^{-\alpha/q'+1/p}\tilde{h}(x)$ and $f(x)=|x|^{1/q}|1-x|^{\alpha/q'-1}$.		
		Applying Lemma \ref{Rle1}, we have
		\begin{equation}\label{501}
			\||x|^{1/q}T\tilde{h}\|_{L^q(\mu)}\geq \||x|^{-\alpha/q'+1/p}\tilde{h}\|_{L^p(\mu)}\|f\|_{L^s(\mu)},
		\end{equation}
		where $1/p+1/s=1+1/q$ and $s<0$.
		
		We claim that  $f\in L^s(\mu)$.
				
		Since the defects of the improper integral above may happen at $0$, $1$ and $\infty$,
		for some $\delta \in (0,1)$, there holds
		$$
		\begin{aligned}
			\int_{0}^{\infty}r^{s/q}|1-r|^{(\alpha/q'-1)s}\frac{dr}{r}
			=&\int_{0}^{1-\delta}r^{s/q}|1-r|^{(\alpha/q'-1)s}\frac{dr}{r}\\
			+&\int_{1-\delta}^{1+\delta}r^{s/q}|1-r|^{(\alpha/q'-1)s}\frac{dr}{r}
			+\int_{1+\delta}^{\infty}r^{s/q}|1-r|^{(\alpha/q'-1)s}\frac{dr}{r}\\
			:=&D_1+D_2+D_3.
		\end{aligned}
		$$
		Note that
		$$
		r^{s/q}|1-r|^{(\alpha/q'-1)s} \simeq Cr^{s/q}, \quad as \; r \to 0,
		$$
		and
		$$
		r^{s/q}|1-r|^{(\alpha/q'-1)s} \simeq C(1-r)^{(1/q+\alpha/q'-1)s}, \quad as \;  r\to \infty.
		$$
		By virtue of $s<0$, $q<0$ and $\alpha>1$, we get
		$$
\frac{s}{q}>0 \quad and \quad \left(\frac{1}{q}+\frac{\alpha}{q'}-1\right)s<0,
$$
		which imply $D_1<\infty$ and $D_3<\infty$.		
		In addition,
		$$
		D_2\leq C \int_0^{\delta}t^{(\alpha/q'-1)s}dt <\infty,
		$$
		because $1/p+1/s=1+1/q$ and \eqref{k1} imply $(\alpha/q'-1)s+1>0$.
The estimates of $D_i(i=1,2,3)$ lead to $f \in L^s(\mu)$.
		Thus, \eqref{501} implies \eqref{hhn}, and hence \eqref{e19} is proved.

		{\it Case 2:} $n\geq 2$.
		
		According to the properties of rearrangement, $h^*$ is radially symmetric and
		decreasing, and
		\begin{equation}\label{k3}
			\|h^*\|_{L^p(\mathbb{R}^n)}=\|h\|_{L^p(\mathbb{R}^n)} \quad  and \quad
			\|g^*\|_{L^{q'}(\mathbb{R}^n)}=\|g\|_{L^{q'}(\mathbb{R}^n)}.
		\end{equation}
		Clearly, $\tilde{h^*}$ is also radially symmetric. 		
		Therefore, using Lemma \ref{Rle2} we obtain
		\begin{equation*}
			\begin{aligned}
				T\tilde{h^*}(x)=&\int_{\mathbb{R}^n}\tilde{h^*}(y)|x-y|^{\alpha/q'-n}dy\\
				=&\int_{0}^{\infty}\int_{S^{n-1}}\tilde{h^*}(r)|x-ry'|^{\alpha/q'-n}r^{n-1}dy'dr\\
				=&\int_{0}^{\infty}\int_{S^{n-1}}\tilde{h^*}(r)
(r^2-2r|x| x'\cdot y'+{|x|}^2)^{\frac{\alpha/q'-n}{2}}r^{n-1}dy'dr\\
				=&\omega_{n-2}\int_{0}^{\infty}\int_{-1}^{1}(1-t^2)^{\frac{n-3}{2}}(r^2-2r|x| t+{|x|}^2)^{\frac{\alpha/q'-n}{2}}\tilde{h^*}(r)r^{n-1}dt dr\\
				=&\omega_{n-2}\int_{0}^{\infty}\int_{-1}^{1}(1-t^2)^{\frac{n-3}{2}}\left(1-2\left(\frac{|x|}{r}\right)t
+\left(\frac{|x|}{r}\right)^2\right)^{\frac{\alpha/q'-n}{2}} \tilde{h^*}(r)r^{\alpha/q'-1}dtdr.
			\end{aligned}
		\end{equation*}
		Write
		$$
		I_{\alpha}(\rho):=\int_{-1}^{1}(1-t^2)^{\frac{n-3}{2}}(1-2\rho t+\rho^2)^{\frac{\alpha/q'-n}{2}}dt, \quad \rho>0.
		$$
		Obviously, $I_{\alpha}(\rho)$ is well defined and continuous for any $\rho>0$, and
$$
		|x|^{n/q}T\tilde{h^*}(x)=\omega_{n-2}\int_{0}^{\infty}\tilde{h^*}(r)r^{\alpha/q'+n/q}
		\frac{|x|^{n/q} }{r^{n/q}}I_{\alpha}\left(\frac{|x| }{r}\right)\frac{dr}{r}.
		$$
Applying Lemma \ref{Rle1} and \eqref{CRI}, we get
		\begin{equation}\label{k2}
			\begin{aligned}
				\||x|^{n/q}T\tilde{h^*}(x)\|_{L^q(\mu)}
				\geq &\omega_{n-2}\||x|^{\alpha/q'+n/q}\tilde{h^*}\|_{L^p(\mu)}\||x|^{n/q}I_{\alpha}(x)\|_{L^s(\mu)}\\
				=&\omega_{n-2} \||x|^{-\alpha/q'+n/p}\tilde{h^*} \|_{L^p(\mu)}\||x|^{n/q}I_{\alpha}(x)\|_{L^s(\mu)}.
			\end{aligned}
		\end{equation}

		Noticing
		$$
		\frac{I_{\alpha}(r)}{r^{\alpha/q'-n}}\simeq \int_{-1}^{1}(1-t^2)^{\frac{n-3}{2}}dt=C, \quad as\;\; r\to \infty,
		$$
		we have		
\begin{equation}\label{ub1}
r^{ns/q}I_{\alpha}^s(r) \simeq Cr^{ns/q+\alpha s/q'-ns}, \quad as \; r\to \infty.
\end{equation}		
Similarly,
\begin{equation}\label{ub2}
r^{ns/q}I_{\alpha}^s(r) \simeq Cr^{ns/q},\quad as \; r \to 0.
\end{equation}		
By virtue of $s<0$, $q<0$ and $\alpha>n$, there hold
		$$
\frac{ns}{q}>0 \quad and \quad \frac{ns}{q}+\frac{\alpha s}{q'}-ns<0.
$$
This result, together with \eqref{ub1} and \eqref{ub2}, implies
$r^{n/q}I_{\alpha}(r)\in {L^s(\mu)}$.	
		Inserting this result into \eqref{k2}, we get
		$$
		\|T\tilde{h^*}(x)\|_{L^q(\mathbb{R}^n)} \geq C\||x^{-\alpha/q'}\tilde{h^*}|\|_{L^p(\mathbb{R}^n)}.
		$$
Therefore, by the reversed H\"{o}lder inequality and \eqref{k3}, we can deduce
\begin{equation}\label{ub3}
\begin{aligned}
&\int_{\mathbb{R}^n}\int_{\mathbb{R}^n}|x-y|^{\alpha/q'-n}g^*(x)h^*(y)|y|^{\alpha/q'}dxdy\\
\geq &\|T\tilde{h^*}(x)\|_{L^q(\mathbb{R}^n)} \|g^*\|_{L^{q'}(\mathbb{R}^n)}
\geq C\||x^{-\alpha/q'}\tilde{h^*}|\|_{L^p(\mathbb{R}^n)} \|g^*\|_{L^{q'}(\mathbb{R}^n)}\\
=&C\|{h^*}|\|_{L^p(\mathbb{R}^n)} \|g^*\|_{L^{q'}(\mathbb{R}^n)}
=C\|{h}|\|_{L^p(\mathbb{R}^n)} \|g\|_{L^{q'}(\mathbb{R}^n)}.
\end{aligned}
\end{equation}
		Combining this result with \eqref{5000}, we obtain \eqref{500}.
		This completes the proof of Theorem \ref{Rth1}.
	\end{proof}

\section{Proof of Theorem \ref{Rth2}}

We need the following lemma.

\begin{lemma}\label{Rlem2} (\cite{DZ}, Theorem 3.2)
	Suppose that $h \in L^p(\mathbb{R}^n)$ is non-negative, radically symmetric, and satisfies
	$h(|x|)\leq \varepsilon |x|^{-n/p}$ for all $|x|>0$.
Then for any $0<t<p$, there exists a constant $C>0$ independent of $h$ and $\varepsilon$ such that
	$$
	\|V_{\alpha}(h)\|_{L^q(\mathbb{R}^n)}\geq C\varepsilon^{1-p/t}\|h\|_{L^p(\mathbb{R}^n)}^{p/t}.
	$$
\end{lemma}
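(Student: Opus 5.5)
The plan is to reduce the statement to a reversed Young inequality on the multiplicative group $(0,\infty)$ with Haar measure $d\mu=dr/r$, exactly as in Case 2 of the proof of Theorem \ref{Rth1}, but with the exponent $p$ replaced by the smaller exponent $t\in(0,p)$. The pointwise bound $h\le\varepsilon|x|^{-n/p}$ will be used only to compare an $L^t$-type weighted quantity with $\|h\|_{L^p}$.

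\textbf{Step 1 (using the pointwise bound).} Write $w(r):=r^{n/p}h(r)$, so that $0\le w\le\varepsilon$. Since $t<p$, we have $w^p\le \varepsilon^{p-t}w^t$. Hence
$$
\|h\|_{L^p(\mathbb{R}^n)}^p=\omega_{n-1}\int_0^\infty w(r)^p\frac{dr}{r}\le \omega_{n-1}\varepsilon^{p-t}\|w\|_{L^t(\mu)}^t .
$$
Therefore it suffices to prove
$$
\|V_\alpha(h)\|_{L^q(\mathbb{R}^n)}\ge C\|w\|_{L^t(\mu)}
$$
with $C$ independent of $h$ and $\varepsilon$. Indeed, this gives $\|V_\alpha(h)\|_{L^q}\ge C(\varepsilon^{t-p}\|h\|_{L^p}^p)^{1/t}=C\varepsilon^{1-p/t}\|h\|_{L^p}^{p/t}$.

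\textbf{Step 2 (convolution structure).} For $n\ge2$, repeat the computation of Case 2 in the proof of Theorem \ref{Rth1} using Lemma \ref{Rle2}, with $\beta=\alpha/q'$. This gives
$$
|x|^{n/q}V_\alpha(h)(x)=\omega_{n-2}\int_0^\infty h(r)\,r^{\beta+n/q}\,K\!\left(\frac{|x|}{r}\right)\frac{dr}{r},\qquad K(\rho):=\rho^{n/q}I_\alpha(\rho).
$$
The condition \eqref{CRI} gives $\beta+n/q=-\beta+n/p+\beta$; more precisely, \eqref{CRI} is exactly what makes the power of $r$ collapse so that the right-hand side is $\omega_{n-2}(w\ast K)(|x|)$ on the group $((0,\infty),\cdot,\mu)$, as in \eqref{k2}. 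In that display the factor $\tilde h^*$ there equals $r^{\beta}h$, so $r^{-\beta+n/p}\tilde h^*=w$. For $n=1$ the same identity holds directly with $K(\rho)=\rho^{1/q}|1-\rho|^{\beta-1}$, as in Case 1; one splits into $x>0$ and $x<0$ and uses the evenness of $h$. We also have $\|V_\alpha(h)\|_{L^q(\mathbb{R}^n)}^q=\omega_{n-1}\||x|^{n/q}V_\alpha h\|_{L^q(\mu)}^q$.

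\textbf{Step 3 (reversed Young).} Apply Lemma \ref{Rle1} with exponents $t\in(0,1)$, $q<0$, and $s_t$ defined by $1/t+1/s_t=1+1/q$. Since $1/t>1/p$, we get $1/s_t<1+1/q-1/p=1/s<0$, so $s_t<0$ as the lemma requires. This yields $\||x|^{n/q}V_\alpha h\|_{L^q(\mu)}\ge \omega_{n-2}\|w\|_{L^t(\mu)}\|K\|_{L^{s_t}(\mu)}$.

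\textbf{Step 4 (the main obstacle).} The key point is to verify that $0<\|K\|_{L^{s_t}(\mu)}<\infty$, which requires redoing the kernel asymptotics \eqref{ub1}--\eqref{ub2} for the new exponent $s_t$.
\begin{itemize}
\item Near $\rho=0$: $K^{s_t}\simeq\rho^{ns_t/q}$, and $ns_t/q>0$ because both $s_t$ and $q$ are negative.
\item Near $\rho=\infty$: $K^{s_t}\simeq\rho^{s_t(n/q+\beta-n)}$. Here $n/q+\beta-n=(\alpha-n)/q'>0$ because $\alpha>n$, so the exponent is negative.
\item For $n=1$ there is a third point, $\rho=1$, where $|1-\rho|^{(\beta-1)s_t}$ appears. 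Its integrability requires $(\beta-1)s_t>-1$, which holds since $\beta>1$ makes $(\beta-1)s_t\ge0$ when $\beta\geq1$. If I have the sign wrong here, I would instead check it against \eqref{k1} as in the estimate of $D_2$.
\end{itemize}
These bounds are uniform, with constants depending only on $n,\alpha,p,q',t$. Combining Steps 1--4 then gives the stated inequality.
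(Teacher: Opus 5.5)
Your approach is the standard argument for this estimate. The paper itself gives no proof and imports the result from Dou--Zhu; your argument is the natural extension of Case 2 in the proof of Theorem \ref{Rth1}. You write $|x|^{n/q}V_\alpha(h)$ as a convolution $w\ast K$ on $((0,\infty),\cdot,dr/r)$, apply Lemma \ref{Rle1} with $t$ in place of $p$, and convert $\|w\|_{L^t(\mu)}$ into $\varepsilon^{1-p/t}\|h\|_{L^p}^{p/t}$ via $w^p\le\varepsilon^{p-t}w^t$. Steps 1 and 3 and the $n\ge 2$ part of Step 4 are correct. Three spots need repair.

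\textbf{Exponents in Step 2.} The bookkeeping is garbled. Condition \eqref{CRI} reads $n/q=n/p-2\beta$, and the radial reduction produces $h(r)r^{2\beta}$, not $h(r)r^{\beta}$. So the identity you need is $2\beta+n/q=n/p$, not ``$\beta+n/q=-\beta+n/p+\beta$''. Your final formula $|x|^{n/q}V_\alpha(h)(x)=\omega_{n-2}(w\ast K)(|x|)$ is nevertheless right.

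\textbf{The $n=1$ check at $\rho=1$.} This is wrong as written. Since $s_t<0$ and $\beta>1$, you have $(\beta-1)s_t<0$, not $\ge 0$, so $(\beta-1)s_t>-1$ is a genuine constraint. It is equivalent to $1/s_t<1-\beta$. Using $1/s_t=1+1/q-1/t=1+1/p-2\beta-1/t$, it reduces to $1/p-1/t<\beta$, which holds because $t<p$.

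A cleaner fix is to keep both terms coming from the evenness of $h$. For $x>0$ one has exactly
$x^{1/q}V_\alpha(h)(x)=\int_0^\infty w(r)\,[K_-(x/r)+K_+(x/r)]\,dr/r$, with $K_\pm(\rho)=\rho^{1/q}|1\mp\rho|^{\beta-1}$. The sum is bounded below near $\rho=1$, so no singularity arises there at all.

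If you keep only $K_-$, as you do, you get an inequality rather than an identity. You should then say why it transfers to the norm: for $q<0$, $V\ge W\ge 0$ implies $V^q\le W^q$, hence $\|V\|_{L^q}\ge\|W\|_{L^q}$.

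\textbf{Applicability of Lemma \ref{Rle1}.} The lemma is stated for $w\in L^t(\mu)$, and the hypotheses do not guarantee $\int w^t\,d\mu<\infty$. If that integral diverges, apply the lemma to $w\chi_{[1/N,N]}$. This function lies in $L^t(\mu)$ because $w\le\varepsilon$. Letting $N\to\infty$ gives $\|V_\alpha(h)\|_{L^q}=\infty$, so the claimed bound holds trivially in that case.
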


\textbf{Proof of Theorem \ref{Rth2}.}

\begin{proof}
		We divide the proof into four steps.
		
		{\it Step 1. Minimizing sequence $\{H_j\}_j$ of \eqref{cc1} and its limit.}
		
		Let $\{h_j\}_j$ be a minimizing sequence for \eqref{cc1}.
According to (iii) in Lemma \ref{lem2.3} and \eqref{ub3}, $\{h_j^*\}_j$ is also the minimizing sequence.
Therefore, we can assume at the beginning that $\{h_j\}_j$ is a non-negative radically
symmetric and decreasing sequence. Now,
		\begin{equation}\label{k4}
			\lim_{j\to \infty}\|V_{\alpha}(h_j)\|_{L^q(\mathbb{R}^n)}= C_{n,\alpha,p,q'}.
		\end{equation}

		By the monotonicity of $\{h_j\}_j$, from $\|h_j\|_{L^p(\mathbb{R}^n)}=1$ it follows
		$$
		\frac{\omega_{n-1}}{n}h_j^p(R)R^n\leq \omega_{n-1}\int_{0}^{R}h_j^p(r)r^{n-1}dr\leq \int_{\mathbb{R}^n}h_j^p(x)dx=1
		$$
		for any $R>0$. Thus, there holds
		$$
		0\leq h_j(R)\leq CR^{-n/p}, \quad \forall \ R>0,
		$$
		where $C>0$ is a constant independent of $j$ and $R$.		
		Therefore,
		$$
		a_j:=\sup_{r>0}r^{n/p}h_j(r)\leq C.
		$$
		Combining $\|h_j\|_{L^p(\mathbb{R}^n)}=1$ and \eqref{k4},
		we derive from Lemma \ref{Rlem2} that $a_j\geq 2c_0$ for some constant $c_0>0$,
which implies that we can choose $\lambda_j>0$ such that $\lambda_j^{n/p}h_j(\lambda_j)>c_0$.
Set
		$$
		H_j(x):=\lambda_j^{n/p}h_j(\lambda_jx).
		$$		
Then for any $j$, $H_j$ is also nonnegative, radially symmetric and decreasing.
		By simple calculations, we find that
		$$
		\|H_j(x)\|_{L^p(\mathbb{R}^n)}=\|h_j(x)\|_{L^p(\mathbb{R}^n)}=1 \quad
		and \quad
		\|V_{\alpha}(H_j)\|_{L^q(\mathbb{R}^n)}=\|V_{\alpha}(h_j)\|_{L^q(\mathbb{R}^n)}.
		$$
		Therefore, $\{H_j(x)\}_j$ is also a minimizing sequence satisfying $H_j(1)>c_0$.
		By an analogous argument in \cite{LEH}, using the Helly theorem (cf. p. 89 in \cite{LEHL}),
we can find a subsequence of $\{H_j\}_j$ denoted by itself such that
$$
H_j(x) \to h(x) \quad a.e. \quad in \quad \mathbb{R}^n, \quad (j \to \infty).
$$
Clearly, $h$ is nonnegative. Applying the Fatou lemma and $\|H_j(x)\|_{L^p(\mathbb{R}^n)}=1$,
		we know $h \in L^p(\mathbb{R}^n)$.		
In the following, we will prove that the limit $h$ is a minimizer of \eqref{cc1}.

{\it Step 2. We claim}
\begin{equation}\label{k6}
			\lim_{j\to \infty}\left(\int_{\mathbb{R}^n}[V_{\alpha}(H_j)]^q(x)dx\right)^{1/q}
			=\left(\int_{\mathbb{R}^n}\lim_{j\to \infty}[V_{\alpha}(H_j)]^q(x)dx\right)^{1/q}.
		\end{equation}
		
		In fact, Lemma \ref{lem2.3} (ii) shows that, for any $j$, $V_\alpha (H_j)(x)$ is radially symmetric and increasing.
In addition, there holds
		\begin{equation}\label{507}
			V_\alpha (H_j)(x)\geq c_0\int_{|y|\leq 1}|x-y|^{\alpha/q'-n}|y|^{\alpha/q'}dy\geq C(1+|x|^{\alpha/q'-n})
		\end{equation}
		for all $x\in \mathbb{R}^n$. Here positive constant $C$ is independent of $j$.
		
		In view of \eqref{k4}, we can find a constant $C>1$ such that
$C^{-1} \leq \|V_{\alpha}(H_j)\|_{L^q(\mathbb{R}^n)} \leq C$ for any $j$.
Consequently, for any $R>0$, we obtain that
		$$
		[V_{\alpha}(H_j)]^q(R)R^n
\leq C\int_{|x| \leq R}[V_{\alpha}(H_j)]^q(x)dx
\leq C\int_{\mathbb{R}^n}[V_{\alpha}(H_j)]^q(x)\leq C,
		$$
		which implies
		$$
		0\leq [V_{\alpha}(H_j)]^{-1}(R)\leq CR^{n/q}, \quad \forall \ R>0.
		$$
		Note that $[V_{\alpha}(H_j)]^{-1}$ is radically symmetric and decreasing.
Applying the Helly theorem, we can find a subsequence of $\{[V_{\alpha}(H_j)]^{-1}\}_j$
denoted by itself such that
$$
[V_{\alpha}(H_j)]^{-1}(x) \to k(x) \quad a.e. \quad in \quad \mathbb{R}^n, \quad (j \to \infty).
$$	
In addition, $(1+|x|^{\alpha/q'-n})^q \in L^1(\mathbb{R}^n)$ (by virtue of $\alpha>n$).
Using \eqref{507} and the dominated convergence theorem, we obtain \eqref{k6}.

{\it Step 3. We claim}	
\begin{equation}\label{k5}
			\|h\|_{L^p(\mathbb{R}^n)}=1.
		\end{equation}

In fact, \eqref{k4} and \eqref{k6} imply $meas\{x \in \mathbb{R}^n; k(x)>0\}>0$.
Thus, we can find two distinct points $x_1,x_2 \in \mathbb{R}^n$ satisfying
$[V_{\alpha}(H_j)](x_m) \to k^{-1}(x_m) <\infty$ $(m=1,2)$, which implies
\begin{equation}\label{ub6}
[V_{\alpha}(H_j)](x_m) \leq C \quad (m=1,2).
\end{equation}
Here $C>0$ is independent of $j$. Therefore,
by $|a+b|^{\alpha/q'-n} \leq C(|a|^{\alpha/q'-n}+|b|^{\alpha/q'-n})$, it follows
$$
|x_1-x_2|^{\alpha/q'-n}\int_{\mathbb{R}^n}H_j(y)|y|^{\alpha/q'}dy
\leq C\sum_{m=1}^2[V_{\alpha}(H_j)](x_m) \leq C.
$$	
Applying this result and the reversed H\"{o}lder inequality, we obtain that for
all $R>2|x_1|$,
		$$
\left(\int_{|y|\leq R}H_j^{\tau}(y)dy\right)^{1/\tau}
\left(\int_{|y|\leq R}|y|^{\alpha\tau'/q'}dy\right)^{1/\tau'}
\leq \int_{|y|\leq R}H_j(y)|y|^{\alpha/q'}dy \leq C.		
		$$
		Here $\tau \in (0,1)$ and $1/\tau+1/\tau'=1$. In view of $\alpha>n$,
from \eqref{CRI} it follows $1+\alpha/(nq')<1/p$. Therefore,
we can choose $1/\tau \in (1+\alpha/(nq'),1/p)$ which implies $\tau'>-nq'/\alpha$.
Thus, there exists some constant $C(R)$ (independent of $j$) such that
$$
			\int_{|y|\leq R}H_j^{\tau}(y)dy \leq C(R).
$$
Therefore, by the monotonicity of $H_j$, there holds
$$
H_j^{\tau}(x)|x|^n \leq C\int_{B_{|x|}(0)}H_j^{\tau}(y)dy \leq C(R),
\quad \forall x \in B_R(0),
$$
which implies $H_j^p(x) \leq C|x|^{-np/\tau}$ for all $x \in B_R(0)$. Here $C>0$ is
independent of $j$ and $x$. In view of $|x|^{-np/\tau} \in L^1(B_R(0))$, using
the dominated convergence theorem, we get
\begin{equation}\label{508}		
\lim_{j\to \infty}\int_{|y|\leq R}H_j^p(y)dy=\int_{|y|\leq R}h^p(y)dy.
\end{equation}

On the other hand, in view of $R>2|x_1|$, by the monotonicity of $H_j$, we have
		$$
			\int_{\mathbb{R}^n}|x_1-y|^{\frac{\alpha}{q'}-n}H_j(y)|y|^{\frac{\alpha}{q'}}dy
			\geq \int_{3R/4 \leq |y| \leq R}|x_1-y|^{\frac{\alpha}{q'}-n}H_j(y)|y|^{\frac{\alpha}{q'}}dy
			\geq CR^{\frac{2\alpha}{q'}}H_j(R),
		$$
where $C>0$ is independent of $j$ and $R$.
		This, together with \eqref{ub6}, implies $H_j(R) \leq CR^{-2\alpha/q'}$ for sufficiently large $R$.
Therefore, in view of $n<2p\alpha/q'$ which is implied by \eqref{CRI}, we get
		\begin{equation}\label{509}
			\lim_{R\to \infty}\lim_{j\to \infty}\int_{|y|\geq R}H_j^p(y)dy=0.
		\end{equation}
		In view of \eqref{508} and \eqref{509}, we obtain that
		$$
		\begin{aligned}
			\lim_{j\to \infty}\int_{\mathbb{R}^n}H_j^p(y)dy
			=&\lim_{R\to \infty}\lim_{j\to \infty}\int_{|y|\leq R}H_j^p(y)dy
+\lim_{R\to \infty}\lim_{j\to \infty}\int_{|y|\geq R}H_j^p(y)dy\\
			=&\lim_{R\to \infty}\int_{|y|\leq R}h^p(y)dy
			=\int_{\mathbb{R}^n}h^p(y)dy,
		\end{aligned}
		$$
		which implies \eqref{k5}.
		
{\it Step 4.}
		Applying the Fatou lemma and noting that $q<0$, we have
		\begin{equation}\label{k7}
			\lim_{j\to \infty} [V_{\alpha}(H_j)]^q=[ \lim_{j\to \infty}V_{\alpha}(H_j) ]^q
\leq [V_{\alpha}(h)]^q.
		\end{equation}

		Since $H_j$ is the minimizing sequence, by \eqref{k6}, \eqref{k7}, \eqref{cc1} and \eqref{k5}, we get
		$$
			\begin{aligned}
				C_{n,\alpha,p,q'}=&\lim_{j\to \infty}\left(\int_{\mathbb{R}^n}[V_{\alpha}(H_j)]^qdx\right)^{1/q}
				=\left(\int_{\mathbb{R}^n}\lim_{j\to \infty}[V_{\alpha}(H_j)]^qdx\right)^{1/q}\\
				\geq &\left(\int_{\mathbb{R}^n}[V_{\alpha}(h)]^qdx\right)^{1/q}
				\geq C_{n,\alpha,p,q'}.
			\end{aligned}
	    $$
		 Thus, $h$ is a minimizer of \eqref{cc1}. The proof of Theorem \ref{Rth2} is complete.
	\end{proof}

\section{Proof of Theorem \ref{Rth5}}

	\begin{proof}
		Assuming that $u,v \in L_{loc}^{\infty}(\mathbb{R}^n \setminus \{0\})$ are
positive solutions of \eqref{j3}, we will deduce a contradiction.
		
		\textit{Case 1:} $\beta \leq 0$.
		
		Let $1/2<|x|<1$. When $y \in B_{|x|/2}(x)$, we have $1/4 \leq |y| \leq 3/2$. Therefore, $\beta \leq 0$ implies
		$$
		u(x) \geq \left(\min_{1/4 \leq |y| \leq 3/2}v^{-p_2}(y)\right)\left(\frac{3}{2}\right)^\beta
\int_{B_{|x|/2}(x)}|x-y|^{\beta-n}dy=\infty.
		$$
		Therefore, $u \notin L_{loc}^{\infty}(\mathbb{R}^n \setminus \{0\})$. It is impossible.
		
		\textit{Case 2:} $0<\beta<n$.
		
		By \eqref{j3} and Lemma 3.11.3 in \cite{Z}, there exists $C>0$ such that for any $\delta>1$,
		\begin{equation}\label{a1}
			\begin{aligned}
				\frac{1}{|B_{\delta}(x_0)|}\int_{B_{\delta}(x_0)}u(x)dx
				=&\int_{\mathbb{R}^n}\left(\frac{1}{|B_{\delta}(x_0)|}
\int_{B_{\delta}(x_0)}|x-y|^{\beta-n}dx\right)
				v^{-p_2}(y)|y|^{\beta}dy\\
				\leq & C\int_{\mathbb{R}^n}|x_0-y|^{\beta-n}v^{-p_2}(y)|y|^{\beta}dy=Cu(x_0).
			\end{aligned}
		\end{equation}
		Here $x_0 \neq 0$.		
		Applying the H\"{o}lder inequality, we obtain that
		\begin{equation}\label{a2}
			\begin{aligned}
				 1=&\frac{1}{|B_{\delta}(x_0)|}\int_{B_{\delta}(x_0)}u^{-\frac{p_1}{p_1+1}}(x)u^{\frac{p_1}{p_1+1}}(x)dx\\
				\leq & \left(\frac{1}{|B_{\delta}(x_0)|}\int_{B_{\delta}(x_0)}u^{-p_1}(x)dx\right)^{\frac{1}{p_1+1}}
				\cdot
				\left(\frac{1}{|B_{\delta}(x_0)|}\int_{B_{\delta}(x_0)}u(x)dx\right)^{\frac{p_1}{p_1+1}}.
			\end{aligned}
		\end{equation}
		By \eqref{a1} and \eqref{a2}, there holds
		\begin{equation}\label{a3}
			C^{-p_1}u^{-p_1}(x_0) \leq \left(\frac{1}{|B_{\delta}(x_0)|}\int_{B_{\delta}(x_0)}u(x)dx\right)^{-p_1}
			\leq \frac{1}{|B_{\delta}(x_0)|}\int_{B_{\delta}(x_0)}u^{-p_1}(x)dx.
		\end{equation}
		In view of $0<\beta<n$,
		we have $\delta^{\beta-n}<|x-x_0|^{\beta-n}$ when $|x-x_0|<\delta$.
		Therefore, multiplying \eqref{a3} by $\delta^{\beta}|x_0|^{\beta}$ yields
		$$
		C^{-p_1}\delta^{\beta}u^{-p_1}(x_0)|x_0|^{\beta}
		\leq \frac{1}{|B_1(x_0)|}\int_{B_{\delta}(x_0)}|x_0-x|^{\beta-n} u^{-p_1}(x)|x_0|^{\beta}dx \leq \frac{v(x_0)}{|B_1(x_0)|}.
		$$
		Noting $\beta>0$, and letting $\delta\to \infty$, we have $v(x_0) = \infty$ which
contradicts with $v \in L_{loc}^{\infty}(\mathbb{R}^n \setminus \{0\})$.

Combining case 1 and case 2, we complete the proof of Theorem \ref{Rth5}.
	\end{proof}
	
	\section{Proof of Theorem \ref{th3}}

	\begin{prop}\label{prop} (Asymptotic rates)
Let $(u,v)$ be a pair of positive entire solutions of \eqref{j3} with $\beta>n$.
		Then $p_1>{\beta}/(\beta-n)$ is equivalent to $2\beta/(2\beta-n)<p_2<(n+\beta)/\beta$.
		Furthermore, if $p_1>\beta/(\beta-n)$ or $2\beta/(\beta-n)<p_2<(n+\beta)/\beta$, then
		\begin{equation*}%\label{401}
				u(x)\simeq |x|^{\beta-n}, \ \ \
				v(x)\simeq |x|^{2\beta-n}, \ \ \  when \ \ |x|\to \infty,
		\end{equation*}
and
\begin{equation*}%\label{4012}				
				v(x)\simeq |x|^{\beta}, \ \ \  when \ \ |x|\to 0.
		\end{equation*}
	\end{prop}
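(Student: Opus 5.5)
The plan is to first extract two finiteness facts from the definition of a positive entire solution, then get the asymptotic rates by dominated convergence, and finally read off the exponent restrictions from the rates. It will turn out that both exponent conditions are necessary for any positive entire solution, so the claimed equivalence holds because both sides are always true.

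\emph{Step 1 (two finite moments).} Since $u\in L^\infty_{loc}(\mathbb{R}^n)$ solves \eqref{j3} pointwise, we have $u(0)<\infty$, that is,
$$A:=\int_{\mathbb{R}^n}|y|^{2\beta-n}v^{-p_2}(y)\,dy<\infty .$$
For the second equation, write $v(x)=|x|^\beta w(x)$ with $w(x)=\int_{\mathbb{R}^n}|x-y|^{\beta-n}u^{-p_1}(y)\,dy$. Then $w(x)<\infty$ for every $x\neq0$.
\begin{itemize}
\item Since $\beta>n$, we have $|y|^{\beta-n}\le C(|x-y|^{\beta-n}+|x|^{\beta-n})$.
\item $u^{-p_1}$ is locally integrable. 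If $p_1\le 0$ this follows from $u\in L^\infty_{loc}$. If $p_1>0$ it follows from the lower bound $u(x)\ge c\int_{B_1}|x-y|^{\beta-n}v^{-p_2}|y|^\beta dy\ge c>0$, which is uniform on compacts.
\end{itemize}
Together these give
$$B:=w(0)=\int_{\mathbb{R}^n}|y|^{\beta-n}u^{-p_1}(y)\,dy<\infty .$$
Likewise, finiteness of $\int|y|^\beta v^{-p_2}$ follows from $A<\infty$ together with local finiteness near the origin.

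\emph{Step 2 (rates).} Use $|x-y|^{\beta-n}|x|^{n-\beta}\le C(1+|y|^{\beta-n})$ for $|x|\ge1$. This dominates the integrand of $u(x)|x|^{n-\beta}$ by $C(1+|y|^{\beta-n})|y|^\beta v^{-p_2}$, which is integrable by Step 1. Dominated convergence then gives
$$u(x)|x|^{n-\beta}\to\int_{\mathbb{R}^n}|y|^\beta v^{-p_2}(y)\,dy\in(0,\infty),$$
i.e.\ $u\simeq|x|^{\beta-n}$ at infinity. The same argument applied to $w$ gives $w(x)|x|^{n-\beta}\to\int u^{-p_1}\in(0,\infty)$, hence $v\simeq|x|^{2\beta-n}$ at infinity. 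Near the origin, continuity of $w$ at $0$ (again by dominated convergence, using $B<\infty$) gives $w(x)\to B$, so $v\simeq B|x|^\beta$ as $x\to0$.

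\emph{Step 3 (exponent conditions).} Insert the rates into the finite integrals.
\begin{itemize}
\item From $B<\infty$ and $u\simeq|y|^{\beta-n}$ at infinity, we need $(\beta-n)(1-p_1)<-n$, i.e.\ $p_1>\beta/(\beta-n)$.
\item From $A<\infty$ and $v\simeq|y|^{2\beta-n}$ at infinity, we need $(2\beta-n)(1-p_2)<-n$, i.e.\ $p_2>2\beta/(2\beta-n)$.
\item Finiteness of $u$ at points $x\ne0$ requires local integrability of $|y|^\beta v^{-p_2}$ at $0$. With $v\simeq|y|^\beta$ there, this means $\beta-\beta p_2>-n$, i.e.\ $p_2<(n+\beta)/\beta$.
\end{itemize}
So every positive entire solution satisfies both conditions, which yields the stated equivalence.

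\emph{Main obstacle.} The delicate point is Step 1 in the case $p_1>0$: one needs a locally uniform positive lower bound for $u$ so that $u^{-p_1}\in L^1_{loc}$. I would get it from the kernel bound $|x-y|^{\beta-n}\ge c>0$ for $y$ in a fixed annulus away from $x$, using that $v>0$ on $\mathbb{R}^n\setminus\{0\}$ and $v\in L^\infty_{loc}$ there. A second point to handle is justifying the passage from finiteness at one point $x\ne0$ to $B<\infty$.
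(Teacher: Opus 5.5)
Your proof is correct, and it takes a genuinely different route from the paper.

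\textbf{How the two routes differ.} The paper argues conditionally in both directions. Assuming $p_1>\beta/(\beta-n)$, it inserts the annulus lower bound $u(y)\ge c|y|^{\beta-n}$ into a region-by-region splitting of the integral for $v$. This gives two-sided power bounds for $v$, and only then does the paper use finiteness of $u$ to extract $2\beta/(2\beta-n)<p_2<(n+\beta)/\beta$. Conversely, assuming that $p_2$-range, it uses the lower bounds $v\ge c|y|^{2\beta-n}$ at infinity and $v\ge c|y|^{\beta}$ near $0$ to bound $u$ from above region by region. It then reads off the $p_1$ condition from finiteness of $v$.

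You instead observe that $u(0)<\infty$ is exactly the moment condition $\int|y|^{2\beta-n}v^{-p_2}\,dy<\infty$. You also use that finiteness of $w=|x|^{-\beta}v$ away from the origin controls $\int(1+|y|^{\beta-n})u^{-p_1}\,dy$. Dominated convergence then gives the rates with no exponent hypothesis at all, and in the sharper form of limits:
\begin{itemize}
\item $|x|^{n-\beta}u(x)\to\int|y|^{\beta}v^{-p_2}$ as $|x|\to\infty$;
\item $|x|^{n-2\beta}v(x)\to\int u^{-p_1}$ as $|x|\to\infty$;
\item $|x|^{-\beta}v(x)\to\int|y|^{\beta-n}u^{-p_1}$ as $x\to0$.
\end{itemize}

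\textbf{What your route buys.} It shows something the paper's proof does not reveal: every positive entire solution with $\beta>n$ satisfies both $p_1>\beta/(\beta-n)$ and $2\beta/(2\beta-n)<p_2<(n+\beta)/\beta$. So the asserted equivalence holds trivially, and the hypotheses of Theorem \ref{th3} are automatic. It also makes the misprinted hypothesis $2\beta/(\beta-n)<p_2<(n+\beta)/\beta$ harmless; as printed this range is empty, since $2\beta/(\beta-n)>2>(n+\beta)/\beta$.

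\textbf{One step needs repair.} In Step 1 you integrate $|y|^{\beta-n}\le C(|x-y|^{\beta-n}+|x|^{\beta-n})$ against $u^{-p_1}$ over all of $\mathbb{R}^n$. That requires $\int_{\mathbb{R}^n}u^{-p_1}<\infty$, which local integrability does not give. You would otherwise deduce global integrability from $B<\infty$, so the argument is circular as written.

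Split at $|y|=2|x|$ instead. For $|y|\ge2|x|$ one has $|y|\le 2|x-y|$, so
\[
\int_{|y|\ge2|x|}|y|^{\beta-n}u^{-p_1}\,dy\le 2^{\beta-n}w(x)<\infty,
\]
and the ball $|y|<2|x|$ is covered by local integrability.

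\textbf{A shortcut for your ``main obstacle''.} You can drop the lower bound on $u$ entirely. Since $\beta>n$ and $\max\{|x_1-y|,|x_2-y|\}\ge|x_1-x_2|/2$, for any two distinct points $x_1,x_2\neq0$ you get
\[
\int u^{-p_1}\,dy\le C\,(w(x_1)+w(x_2))<\infty,\qquad \int|y|^{\beta}v^{-p_2}\,dy\le C\,(u(x_1)+u(x_2))<\infty .
\]
This holds for any sign of $p_1,p_2$. It is the same two-point trick the paper uses in Step 3 of the proof of Theorem \ref{Rth2}.
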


	\begin{proof}
		{\it Step 1.} Assume
		\begin{equation}\label{380}
			p_1>\frac{\beta}{\beta-n}.
		\end{equation}
Then we will prove $2\beta/(2\beta-n)<p_2<(n+\beta)/\beta$,
$v(x)\simeq |x|^{\beta}$ when $|x|\to 0$, and $v(x)\simeq |x|^{2\beta-n}$ when $|x|\to \infty$.

		(1) Let $|x| \geq R > 2$. Clearly,
		\begin{equation}\label{381}
			\begin{aligned}
				u(x)\geq &\int_{B_1(0) \setminus B_{1/2}(0)}\frac{|x-y|^{\beta-n}|y|^{\beta}}{v^{p_2}(y)}dy\\
				\geq &c\left(\min_{1/2 \leq |y| \leq 1}v^{-p_2}(y)\right)\int_{B_1(0) \setminus B_{1/2}(0)}|x-y|^{\beta-n}|y|^{\beta}dy
				\geq c|x|^{\beta-n}.
			\end{aligned}
		\end{equation}
		
		Similarly, we also have
		\begin{equation}\label{382}			
				v(x)\geq \int_{B_1(0)}\frac{|x-y|^{\beta-n}|x|^{\beta}}{u^{p_1}(y)}dy
				\geq c\left(\min_{|y| \leq 1}u^{-p_1}(y)\right)\int_{B_1(0)}|x-y|^{\beta-n}|x|^{\beta}dy
				\geq c|x|^{2\beta-n},
		\end{equation}
		and
		\begin{equation}\label{383}
							v_1(x):=\int_{B_R(0)}\frac{|x-y|^{\beta-n}|x|^{\beta}}{u^{p_1}(y)}dy
				\leq C\left(\max_{|y| \leq R}u^{-p_1}(y)\right)|x|^{2\beta-n}
				\leq C|x|^{2\beta-n}.			
		\end{equation}
		
		When $y\in B_{2|x|}(0)$, $|x-y|\leq |x|+|y|\leq 3|x|$.
		Thus, combining \eqref{380} and \eqref{381}, we have
		\begin{equation}\label{384}			
				v_2(x):=\int_{B_{2|x|}(0)\setminus B_R(0)}
				\frac{|x-y|^{\beta-n}|x|^{\beta}}{u^{p_1}(y)}dy
				\leq C\int_{B_{2|x|}(0)\setminus B_R(0)}
				\frac{|x|^{2\beta-n}dy}{|y|^{(\beta-n)p_1}}
				\leq C|x|^{2\beta-n}.
		\end{equation}
		
		When $y\in \mathbb{R}^n\setminus B_{2|x|}(0)$, $|x-y|\leq |x|+|y|\leq 3|y|/2$.
Combining \eqref{380} and \eqref{381} again, we get
		\begin{equation}\label{385}			
				v_3(x):=\int_{\mathbb{R}^n\setminus B_{2|x|}(0)}
				\frac{|x-y|^{\beta-n}|x|^{\beta}}{u^{p_1}(y)}dy
				\leq C\int_{\mathbb{R}^n\setminus B_{2|x|}(0)}
				\frac{|y|^{\beta-n}|x|^{\beta}dy}{|y|^{(\beta-n)p_1}}		
		\leq C|x|^{2\beta-(\beta-n)p_1}.
		\end{equation}
		Clearly, \eqref{380} implies $2\beta-(\beta-n)p_1<2\beta-n$.
		Therefore, estimates of \eqref{383}-\eqref{385} lead to
		\begin{equation}\label{386}
			v(x)\leq C|x|^{2\beta-n} \ \ when \ \ |x|\to \infty.
		\end{equation}		
		Combining this result and \eqref{382}, we obtain
		\begin{equation}\label{387}
			v(x)\simeq C|x|^{2\beta-n} \ \ when \ \ |x|\to \infty.
		\end{equation}

		When $y\in \mathbb{R}^n\setminus B_{2|x|}(0)$, $|x-y|\geq |y|-|x|\geq |y|/2$.
Therefore, from \eqref{386} it follows
		$$
		u(x)\geq \int_{\mathbb{R}^n\setminus B_{2|x|}(0)}
		\frac{|x-y|^{\beta-n}|y|^{\beta}}{v^{p_2}(y)}dy
		\geq C\int_{2|x|}^{\infty}r^{2\beta-(2\beta-n)p_2}\frac{dr}{r}.
		$$
		Since $u$ is an entire solution, we see from the result above that
		$2\beta-(2\beta-n)p_2<0$. Namely,
		\begin{equation}\label{388}
			p_2>\frac{2\beta}{2\beta-n}.
		\end{equation}

(2) Let $|x| \ll 1$.

When $|y| \leq 2|x|$, there holds $|x-y| \leq 3|x|$. Thus,
$$
v_4(x):=\int_{B_{2|x|(0)}}\frac{|x-y|^{\beta-n}|x|^{\beta}}{u^{p_1}(y)}dy
\leq C\left(\max_{|y| \leq 1}u^{-p_1}(y)\right)|x|^{2\beta-n} \int_0^{2|x|}r^n\frac{dr}{r}
\leq C|x|^{2\beta}.
$$
When $y \in B_R(0) \setminus B_{2|x|}(0)$, there holds $|y|/2 \leq |y|-|x|
\leq |x-y| \leq |x|+|y| \leq 3|y|/2$. Therefore,
$$
v_5(x):=\int_{B_R(0) \setminus B_{2|x|}(0)}\frac{|x-y|^{\beta-n}|x|^{\beta}}{u^{p_1}(y)}dy
\leq C\left(\max_{|y| \leq R}u^{-p_1}(y)\right)|x|^\beta \int_{2|x|}^R r^{n+\beta-n}\frac{dr}{r}
\leq C|x|^{\beta},
$$
and
$$
v_5(x) \geq C\left(\min_{|y| \leq R}u^{-p_1}(y)\right)|x|^\beta \int_{2|x|}^R r^{n+\beta-n}\frac{dr}{r}
\geq C|x|^{\beta}.
$$
By \eqref{381} and \eqref{380}, it follows
$$
v_6(x):=\int_{\mathbb{R}^n \setminus B_R(0)}\frac{|x-y|^{\beta-n}|x|^{\beta}}{u^{p_1}(y)}dy
\leq C|x|^\beta \int_R^\infty r^{n+\beta-n-p_1(\beta-n)}\frac{dr}{r}
\leq C|x|^{\beta}.
$$
Combining the estimates of $v_i$ $(i=4,5,6)$ we get
\begin{equation}\label{3872}
			v(x) \simeq C|x|^{\beta} \ \ when \ \ |x| \to 0.
		\end{equation}

Applying \eqref{3872}, we can find $\delta \in (0,1/2)$ such that for $|x| > 2\delta$,
$$
u(x) \geq \int_{B_\delta(0)}\frac{|x-y|^{\beta-n}|y|^{\beta}}{v^{p_2}(y)}dy
\geq c\delta^{\beta-n}\int_0^\delta r^{n+\beta-p_2\beta}\frac{dr}{r}.
$$
Since $u$ is an entire solution, we have $n+\beta-p_2\beta>0$ which implies
$0<p_2<(n+\beta)/\beta$.
Combining this with \eqref{388}, we get
\begin{equation}\label{3882}
\frac{2\beta}{2\beta-n}<p_2<\frac{n+\beta}{\beta}.
\end{equation}

{\it Step 2.} Assume that \eqref{3882} holds. We will prove
		\eqref{380} and $u(x)\simeq |x|^{\beta-n}$ when $|x|\to \infty$.

In fact, by \eqref{387} and \eqref{3872}, we can find $\delta \in (0,1)$ and $R>1$
such that
\begin{equation}\label{nm1}
v(y) \geq c|y|^{2\beta-n} \quad when \quad |y| \geq R,
\end{equation}
and
\begin{equation}\label{nm2}
v(y) \geq c|y|^\beta \quad when \quad |y| \leq \delta.
\end{equation}

Let $|x| \gg 1$.
Using \eqref{nm2} and \eqref{3882} we have
$$
u_1(x):=\int_{B_\delta(0)}\frac{|x-y|^{\beta-n}|y|^{\beta}}{v^{p_2}(y)}dy
\leq C|x|^{\beta-n}\int_0^\delta r^{n+\beta-p_2\beta}\frac{dr}{r} \leq C|x|^{\beta-n}.
$$
Since $v$ is an entire solution, there holds
$$
u_2(x):=\int_{B_R(0) \setminus B_\delta(0)}\frac{|x-y|^{\beta-n}|y|^{\beta}}{v^{p_2}(y)}dy
\leq C\left(\max_{\delta \leq |y| \leq R}v^{-p_2}(y)\right)|x|^{\beta-n}\int_\delta^R r^{n+\beta}\frac{dr}{r}
\leq C|x|^{\beta-n}.
$$
When $|y| \leq 2|x|$, $|x-y| \leq 3|x|$. By \eqref{nm1}, \eqref{3882} and $\beta>n$, there holds
$$
u_3(x):=\int_{B_{2|x|}(0) \setminus B_R(0)}\frac{|x-y|^{\beta-n}|y|^{\beta}}{v^{p_2}(y)}dy
\leq C|x|^{\beta-n}\int_R^{2|x|} r^{n+\beta-p_2(2\beta-n)}\frac{dr}{r}
\leq C|x|^{\beta-n}.
$$
When $|y| \geq 2|x|$, $|x-y| \leq 3|y|/2$. By \eqref{nm1}, \eqref{3882}, there holds
$$
u_4(x):=\int_{\mathbb{R}^n \setminus B_{2|x|}(0)}\frac{|x-y|^{\beta-n}|y|^{\beta}}{v^{p_2}(y)}dy
\leq C\int_{2|x|}^\infty r^{n+\beta-n+\beta-p_2(2\beta-n)}\frac{dr}{r}
\leq C|x|^{2\beta-p_2(2\beta-n)}.
$$
Clearly, \eqref{3882} and $\beta>n$ lead to $\beta-n>2\beta-p_2(2\beta-n)$.
Therefore, estimates of $u_i(x)$ $(i=1,2,3,4)$ imply
$u(x)\leq C|x|^{\beta-n}$ when $|x|\to \infty$. Combining this with \eqref{381},
we see that
\begin{equation}\label{nm3}
u(x)\simeq |x|^{\beta-n}\quad  when \quad |x| \to \infty.
\end{equation}

Next we prove \eqref{380}. If \eqref{380} is not true,
for some large $|x_0|$, from \eqref{j3} and \eqref{nm3} we can deduce
$$
v(x_0) \geq \int_{\mathbb{R}^n \setminus B_{2|x_0|}(0)}
\frac{|x_0-y|^{\beta-n}|x_0|^{\beta}}{u^{p_1}(y)}dy
\geq c|x_0|^\beta \int_{2|x_0|}^\infty r^{n+\beta-n-p_1(\beta-n)}\frac{dr}{r}
=\infty.
$$
It contradicts with $v \in L_{loc}^\infty(\mathbb{R}^n)$.

		The proof of Proposition \ref{prop} is complete.
	\end{proof}

		\begin{prop}\label{prop2} (Integrability)
Let $(u,v)$ be a pair of positive entire solutions of \eqref{j3} with $\beta>n$.
		If $p_1>\beta/(\beta-n)$ or $(n+\beta)/\beta>p_2>2\beta/(2\beta-n)$, then
		$u^{-1}\in L^s(\mathbb{R}^n)$ for all $s>n/(\beta-n)$, and
$v^{-1}\in L^t(\mathbb{R}^n)$ for all $t \in (n/(2\beta-n),n/\beta)$.
	\end{prop}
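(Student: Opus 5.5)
The plan is to read off the integrability from the two-sided asymptotics in Proposition \ref{prop}, using that $u$ and $v$ are positive and continuous (locally bounded above and below) away from the points where their behaviour is already controlled. By Proposition \ref{prop}, under either hypothesis we have
$$u(x)\simeq |x|^{\beta-n},\qquad v(x)\simeq |x|^{2\beta-n}\quad (|x|\to\infty),\qquad v(x)\simeq |x|^{\beta}\quad (|x|\to 0).$$
So we fix $\delta\in(0,1)$ and $R>1$ such that these comparisons hold on $B_\delta(0)$ (for $v$) and on $\mathbb{R}^n\setminus B_R(0)$ (for $u,v$). Then we split $\mathbb{R}^n$ into $B_\delta(0)$, the annulus $B_R(0)\setminus B_\delta(0)$, and $\mathbb{R}^n\setminus B_R(0)$.

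For $u^{-1}$, no singularity occurs near the origin. Since $u>0$ on $\mathbb{R}^n$ and $u\in L^\infty_{loc}$ solves \eqref{j3}, the lower bound used in \eqref{381}, or equally a positive lower bound on compacts coming from the integral representation, shows that $u^{-1}$ is bounded on $B_R(0)$. Hence $\int_{B_R}u^{-s}<\infty$ for every $s>0$. Outside $B_R$ we have $u^{-s}\le C|x|^{-(\beta-n)s}$, which is integrable at infinity exactly when $(\beta-n)s>n$, i.e. $s>n/(\beta-n)$. This gives $u^{-1}\in L^s(\mathbb{R}^n)$ for all such $s$.

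For $v^{-1}$, three regions have to be controlled.
\begin{itemize}
\item On the annulus, $v$ is positive and continuous (or at least bounded below by the lower estimate in \eqref{382}-type arguments applied on compacts avoiding $0$), so $v^{-t}$ is bounded there.
\item Near the origin, $v^{-t}\le C|x|^{-\beta t}$, which is integrable on $B_\delta$ iff $\beta t<n$, i.e. $t<n/\beta$.
\item At infinity, $v^{-t}\le C|x|^{-(2\beta-n)t}$, integrable iff $(2\beta-n)t>n$, i.e. $t>n/(2\beta-n)$.
\end{itemize}
Because $\beta>n$ gives $2\beta-n>\beta$, the interval $(n/(2\beta-n),n/\beta)$ is nonempty, and for $t$ in it all three pieces are finite.

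The only point requiring some care is the positive lower bound of $u$ and $v$ on compact sets away from the singular regions. This is needed since $L^\infty_{loc}$ gives only upper bounds. I would obtain it directly from \eqref{j3}, as in \eqref{381}--\eqref{382}, restricting the integral to a fixed ball where the integrand is bounded below. For $v$ on $\{\delta\le|x|\le R\}$, this uses $|x|^\beta\ge\delta^\beta$ together with $u^{-p_1}\ge (\max_{B_1}u)^{-p_1}>0$. Everything else is a direct power-count.
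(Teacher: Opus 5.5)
Your proposal is correct and follows the paper's proof. You split $\mathbb{R}^n$ into $B_\delta(0)$, the annulus $B_R(0)\setminus B_\delta(0)$ and $\mathbb{R}^n\setminus B_R(0)$, then power-count using the lower bounds $u\geq c|x|^{\beta-n}$ and $v\geq c|x|^{2\beta-n}$ for large $|x|$ and $v\geq c|x|^{\beta}$ near $0$ from Proposition \ref{prop}. Your explicit positive lower bound for $u$ and $v$ on compact sets, obtained from the integral representation using $p_1,p_2>0$ and local boundedness, fills in a step the paper covers only with the phrase ``since $u$ is a positive entire solution''.
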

		
\begin{proof}		
According to Proposition \ref{prop}, there exists $R>0$ sufficiently large, such that
		$$
		u(x) \geq |x|^{\beta-n}, \quad |x|>R.
		$$
		Since $u$ is a positive entire solution, for any $s>0$ we have
		$$
					\int_{\mathbb{R}^n}u^{-s}(x)dx
			=\int_{B_R(0)}u^{-s}(x)dx+\int_{\mathbb{R}^n\setminus B_R(0)}u^{-s}(x)dx
			\leq C+C\int_{R}^{\infty}r^{n-s(\beta-n)}\frac{dr}{r}.
		$$
		Therefore, $u^{-1}\in L^s(\mathbb{R}^n)$ as long as $s>n/(\beta-n)$.
		
		Similarly, Proposition \ref{prop} shows that there exist large $R>1$ and small $\delta
\in (0,1)$ such that $v(x) \geq |x|^{2\beta-n}$ when $|x|>R$, and $v(x) \geq |x|^{\beta}$ when $|x|<\delta$.
Therefore, for any $t>0$, we have
$$
		\begin{aligned}
			\int_{\mathbb{R}^n}v^{-t}(x)dx
			=&\int_{B_\delta(0)}v^{-t}(x)dx+\int_{B_R(0) \setminus B_\delta(0)}v^{-t}(x)dx
+\int_{\mathbb{R}^n\setminus B_R(0)}v^{-t}(x)dx\\
			\leq& C\int_{0}^{\delta}r^{n-t\beta}\frac{dr}{r}+C
+C\int_{R}^{\infty}r^{n-t(2\beta-n)}\frac{dr}{r}.
		\end{aligned}
		$$
		Therefore, $v^{-1}\in L^t(\mathbb{R}^n)$ as long as $n/(2\beta-n)<t<n/\beta$.
\end{proof}
		
		\begin{prop}\label{prop3} (Necessary condition)
Let $(u,v)$ be a pair of positive differentiable entire solutions of \eqref{j3} with $\beta>n$.
		If either \eqref{380} or \eqref{3882} holds, \eqref{105} must be true. 		
	\end{prop}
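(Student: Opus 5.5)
The plan is to derive a Pohozaev-type identity from the scaling structure of \eqref{j3}, using the integrability in Proposition \ref{prop2} to make every integral finite and to kill the boundary terms. By Proposition \ref{prop}, \eqref{380} and \eqref{3882} are equivalent, so we may assume both hold, together with the asymptotics there.

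\textbf{Step 1 (the key energy).} Set $E:=\int_{\mathbb{R}^n}u^{1-p_1}(x)dx$. Multiplying the first equation of \eqref{j3} by $u^{-p_1}(x)$ and using Fubini with the second equation gives
$$E=\int_{\mathbb{R}^n}\int_{\mathbb{R}^n}|x-y|^{\beta-n}|y|^{\beta}u^{-p_1}(x)v^{-p_2}(y)dxdy=\int_{\mathbb{R}^n}v^{1-p_2}(y)dy .$$
For finiteness I apply Proposition \ref{prop2} with $s=p_1-1$ and $t=p_2-1$. The condition $p_1-1>n/(\beta-n)$ is exactly \eqref{380}. The condition $p_2-1\in(n/(2\beta-n),n/\beta)$ is exactly \eqref{3882}. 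So $0<E<\infty$.

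\textbf{Step 2 (differentiated identities).} Change variables $y=\lambda z$ in $u(\lambda x)$ and $v(\lambda x)$ and differentiate at $\lambda=1$. Using differentiability and dominated convergence, with bounds from Proposition \ref{prop}, this gives
$$x\cdot\nabla u(x)=2\beta u(x)-p_2\int_{\mathbb{R}^n}|x-z|^{\beta-n}|z|^{\beta}v^{-p_2-1}(z)\,z\cdot\nabla v(z)dz,$$
and the analogous identity for $v$, which I may not even need.

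Multiply the $u$-identity by $u^{-p_1}(x)$ and integrate over $B_R\setminus B_\varepsilon$, then let $\varepsilon\to0$ and $R\to\infty$.

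On the left, $u^{-p_1}x\cdot\nabla u=\frac{1}{1-p_1}x\cdot\nabla(u^{1-p_1})$. Integrating by parts gives $-\frac{n}{1-p_1}E$ plus the boundary term $\frac{1}{1-p_1}\int_{\partial B_R}R\,u^{1-p_1}dS$.

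On the right, Fubini and $\int u^{-p_1}(x)|x-z|^{\beta-n}dx=v(z)|z|^{-\beta}$ turn the double integral into $-p_2\int v^{-p_2}z\cdot\nabla v\,dz$. A second integration by parts turns this into $\frac{np_2}{1-p_2}E$ plus a boundary term at $\partial B_R$ and one at $\partial B_\varepsilon$.

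Dividing by $E>0$ gives
$$-\frac{n}{1-p_1}=2\beta+\frac{np_2}{1-p_2}.$$
Writing $\frac{np_2}{p_2-1}=n+\frac{n}{p_2-1}$, this rearranges to $\frac{n}{p_1-1}+\frac{n}{p_2-1}=2\beta-n$, which is \eqref{105}.

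\textbf{Step 3 (main obstacle: boundary terms and Fubini).} The bulk of the work is justifying the interchanges of integration and showing the boundary terms vanish.

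At infinity, $u^{1-p_1}\simeq |x|^{(\beta-n)(1-p_1)}$ and $v^{1-p_2}\simeq|x|^{(2\beta-n)(1-p_2)}$. Since these are integrable, $R\int_{\partial B_R}(\cdot)\,dS\to0$, at least along a sequence $R_j\to\infty$. Near the origin, $v^{1-p_2}\simeq |x|^{\beta(1-p_2)}$ with $n+\beta(1-p_2)>0$ by \eqref{3882}, so $\varepsilon\int_{\partial B_\varepsilon}v^{1-p_2}dS\to0$; $u$ is positive and bounded near $0$.

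The delicate part is the Fubini step on the truncated domain. Truncating in $x$ while integrating $z$ over $\mathbb{R}^n$ creates a mismatch. To handle it I would first run the argument with full integrals, proving absolute convergence of $\int\int|x-z|^{\beta-n}|z|^{\beta}u^{-p_1}(x)v^{-p_2-1}(z)|z\cdot\nabla v(z)|dzdx$. For that I would use the bound $|z\cdot\nabla v|\le Cv$, obtained from the differentiated equation and the asymptotics of Proposition \ref{prop}. If that pointwise gradient bound proves hard, I would instead integrate the $\lambda$-identity in $\lambda$ (finite differences) rather than differentiating, which avoids gradients altogether.
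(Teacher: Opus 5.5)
Your proposal is correct and follows essentially the paper's route: finiteness of $\int u^{1-p_1}$ and $\int v^{1-p_2}$ from Proposition \ref{prop2}, the scaling identity $x\cdot\nabla u=2\beta u+\int_{\mathbb{R}^n}|x-z|^{\beta-n}|z|^{\beta}\,z\cdot\nabla v^{-p_2}(z)\,dz$ multiplied by $u^{-p_1}$, Fubini against the second equation, integration by parts with vanishing boundary terms, and the identity $\int u^{1-p_1}=\int v^{1-p_2}$. The only difference is in the justification. Your bound $|z\cdot\nabla v(z)|\le Cv(z)$ indeed follows from differentiating the $v$-equation, using $\beta>n\ge 1$ and the asymptotics of Proposition \ref{prop}; it dominates the double integral by $CE$, and so justifies Fubini and the differentiation in $\lambda$ more cleanly than the paper's pointwise finiteness claim \eqref{125}. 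You also handle the inner boundary term for $v^{1-p_2}$ at the origin, which the paper leaves implicit.
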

		
\begin{proof}		We use the Pohozaev identity in integral form to prove \eqref{105}.
		
		{\it Step 1.}
Proposition \ref{prop} shows that \eqref{380} is equivalent to \eqref{3882}. If one holds,
		the other is still true. From \eqref{380} and \eqref{3882}, it follows that
		$$
		p_1-1>\frac{n}{\beta-n},\quad p_2-1 \in \left(\frac{n}{2\beta-n},\frac{n}{\beta}\right).
		$$
		Therefore, Proposition \ref{prop2} implies
		$$
		(u^{-1}, v^{-1})\in L^{p_1-1}(\mathbb{R}^n)\times L^{p_2-1}(\mathbb{R}^n).
		$$
		According to the definition of the improper integral, we get
		$$
		0=\lim_{\mu\to \infty}\int_{B_{2\mu}(0)\setminus B_{\mu}(0)}u^{1-p_1}(x)dx
		=\lim_{\mu\to \infty}\int_{\mu}^{2\mu}[r\int_{\partial B_r(0)}u^{1-p_1}(x)ds]\frac{dr}{r},
		$$
		and hence
		$$
		\inf_{[\mu,2\mu]}r\int_{\partial B_r(0)}u^{1-p_1}(x)ds\to 0
		$$
		when $\mu \to \infty$. This implies that there exists $R_j\to \infty$ such that
		\begin{equation}\label{800}
			R_j\int_{\partial B_{R_j}(0)}u^{1-p_1}(x)ds\to 0.
		\end{equation}

		Next, integrating by parts, we get
		$$
		\int_{B_R(0)}x\cdot\nabla u^{1-p_1}dx=R\int_{\partial B_R(0)} u^{1-p_1}(x)ds-n\int_{B_R(0)}u^{1-p_1}(x)dx.
		$$
		Letting $R=R_j\to \infty$ and using \eqref{800}, we obtain $
		(x\cdot \nabla u^{1-p_1})\in L^1(\mathbb{R}^n)$, and
		\begin{equation}\label{801}
			\int_{\mathbb{R}^n}x\cdot\nabla u^{1-p_1}dx=-n\int_{\mathbb{R}^n}u^{1-p_1}(x)dx.
		\end{equation}
		Therefore,
		\begin{equation}\label{802}
			\int_{\mathbb{R}^n}\frac{x\cdot\nabla u(x)}{u^{p_1}(x)}dx
			=\frac{1}{1-p_1}\int_{\mathbb{R}^n}x\cdot \nabla u^{1-p_1}(x)dx
			=\frac{n}{p_1-1}\int_{\mathbb{R}^n}u^{1-p_1}(x)dx.
		\end{equation}
		
		Similar to the derivation of \eqref{801}, we also have $(x\cdot \nabla v^{1-p_2})\in L^1(\mathbb{R}^n)$ and
		\begin{equation}\label{803}
			\int_{\mathbb{R}^n}x\cdot\nabla v^{1-p_2}dx=-n\int_{\mathbb{R}^n}v^{1-p_2}(x)dx.
		\end{equation}

{\it Step 2. We claim
\begin{equation}\label{125}
\left|\int_{\mathbb{R}^n}|x-z|^{\beta-n}|z|^{\beta}(z\cdot \nabla v^{-p_2}(z))dz\right|<\infty
\end{equation}
at each $x \in \mathbb{R}^n$.}

Integrating by parts, we have
$$\begin{aligned}
&\int_{B_R(0)}|x-z|^{\beta-n}|z|^{\beta}(z\cdot \nabla v^{-p_2}(z))dz\\
=&\int_{\partial B_R(0)}|x-z|^{\beta-n}|z|^{\beta+1} v^{-p_2}(z)ds
-(n+\beta)\int_{B_R(0)}|x-z|^{\beta-n}|z|^{\beta} v^{-p_2}(z)dz\\
&+(\beta-n)\int_{B_R(0)}|x-z|^{\beta-n-2}|z|^{\beta}[z\cdot(x-z)]  v^{-p_2}(z)dz\\
:=&Y_1-(n+\beta)Y_2+(\beta-n)Y_3.	
\end{aligned}
$$		

According to Proposition \ref{prop}, by \eqref{388} we get
$$
0 \leq Y_1 \leq CR^{2\beta-p_2(2\beta-n)} \to 0 \quad when \quad R \to \infty.
$$
Clearly,
$$
\lim_{R \to \infty} Y_2 = u(x).
$$
To obtain the limit of $Y_3$, write
$$
Y(x;\Omega):=\int_{\Omega}|x-z|^{\beta-n-2}|z|^{\beta}[z\cdot(x-z)]  v^{-p_2}(z)dz.
$$
We observe that the defects of the improper integral
$Y(x;\mathbb{R}^n)$ may happen at $x$ and $\infty$.

When $x=0$, noting the asymptotic rates
in Proposition \ref{prop}, we can find $\delta \in (0,1)$ and $\rho>1$ such that
$$\begin{aligned}
|Y(0;\mathbb{R}^n)| &\leq |Y(0;B_\delta(0))|+|Y(0;B_\rho(0) \setminus B_\delta(0))|
+|Y(0;\mathbb{R}^n \setminus B_\rho(0))|\\
&\leq C\int_0^\delta r^{2\beta-p_2\beta}\frac{dr}{r}+C
+C\int_\rho^\infty r^{2\beta-p_2(2\beta-n)}\frac{dr}{r}<\infty
\quad (by \ \eqref{3882}).
\end{aligned}
$$
When $x \neq 0$, using Proposition \ref{prop},
we can find $\rho>\max\{1,2|x|\}$ and $\delta \in (0,\min\{1,|x|/2\})$
such that
$$\begin{aligned}
|Y(x;\mathbb{R}^n)| &\leq |Y(x;B_\delta(x))|+|Y(x;B_\rho(0) \setminus B_\delta(x))|
+|Y(x;\mathbb{R}^n \setminus B_\rho(0))|\\
&\leq C\left(\max_{|z-x| \leq \delta}|z|^{\beta+1}v^{-p_2}(z)\right)
\int_0^\delta r^{\beta-1}\frac{dr}{r}+C
+C\int_\rho^\infty r^{2\beta-p_2(2\beta-n)}\frac{dr}{r}\\
&<\infty \quad (by \ \ \eqref{388}).
\end{aligned}
$$
Therefore,
$Y_3 \to Y(x;\mathbb{R}^n)$ when $R \to \infty$.
Combining the convergence results of $Y_1,Y_2$ and $Y_3$, we can see \eqref{125}.

		{\it Step 3.}
		For $\mu>0$,
		$$
		u(\mu x)=\int_{\mathbb{R}^n}\frac{|\mu x-y|^{\beta-n}|y|^{\beta}}{v^{p_2}(y)}dy
		=\mu^{2\beta}\int_{\mathbb{R}^n}\frac{|x-z|^{\beta-n}|z|^{\beta}}{v^{p_2}(\mu z)}dz.
		$$
		Differentiating both sides with respect to $\mu$ and then letting $\mu=1$, we get			
\begin{equation}\label{130}
x\cdot \nabla u(x)=\left[\frac{du(\mu x)}{d\mu}\right]_{\mu=1}
		=2\beta u(x)+\int_{\mathbb{R}^n}|x-z|^{\beta-n}|z|^{\beta}(z\cdot\nabla v^{-p_2}(z))dz.
\end{equation}		
By \eqref{125} we know that \eqref{130} makes sense.
		Multiplying \eqref{130} by $u^{-p_1}(x)$ and integrating on $B_R(0)$,
		we obtain
		$$\begin{aligned}
		&\int_{B_R(0)}\frac{x\cdot \nabla u(x)}{u^{p_1}(x)}dx
		-2\beta \int_{B_R(0)}u^{1-p_1}(x)dx\\
&=\int_{B_R(0)}u^{-p_1}(x)
\int_{\mathbb{R}^n}|x-z|^{\beta-n}|z|^{\beta}(z\cdot \nabla v^{-p_2}(z))dzdx.
\end{aligned}
		$$
Letting $R \to \infty$, from \eqref{802} we know that the limit of the right hand side exists, and hence
\begin{equation}\label{804}
\left(\frac{n}{p_1-1}-2\beta\right)\int_{\mathbb{R}^n}u^{1-p_1}(x)dx=\int_{\mathbb{R}^n}u^{-p_1}(x)
\int_{\mathbb{R}^n}|x-z|^{\beta-n}|z|^{\beta}(z\cdot \nabla v^{-p_2}(z))dzdx.
\end{equation}

On the other hand, applying \eqref{j3} and the Fubini theorem, we have
		\begin{equation*}
			\begin{aligned}
				\frac{p_2}{p_2-1}\int_{\mathbb{R}^n}z\cdot \nabla v^{1-p_2}(z)dz
				=&\int_{\mathbb{R}^n}(z\cdot \nabla v^{-p_2}(z))v(z)dz\\
				=&\int_{\mathbb{R}^n}(z\cdot \nabla v^{-p_2}(z))\int_{\mathbb{R}^n}\frac{|x-z|^{\beta-n}|z|^{\beta}}{u^{p_1}(x)}dxdz\\
				=&\int_{\mathbb{R}^n}u^{-p_1}(x)\int_{\mathbb{R}^n}|x-z|^{\beta-n}|z|^{\beta}(z\cdot \nabla v^{-p_2}(z))dzdx.
			\end{aligned}
		\end{equation*}
		Inserting \eqref{803} and \eqref{804} into the above result yields
		\begin{equation}\label{805}
			\left(\frac{n}{p_1-1}-2\beta\right)\int_{\mathbb{R}^n}u^{1-p_1}(x)dx
			=-\frac{np_2}{p_2-1}\int_{\mathbb{R}^n}v^{1-p_2}(x)dx.
		\end{equation}
		
		Applying \eqref{j3} and the Fubini theorem, we have
		$$
		\begin{aligned}
			\int_{\mathbb{R}^n}u^{1-p_1}(x)dx
			=&\int_{\mathbb{R}^n}u^{-p_1}(x)\int_{\mathbb{R}^n}\frac{|x-y|^{\beta-n}|y|^{\beta}}{v^{p_2}(y)}dydx\\
			=&\int_{\mathbb{R}^n}v^{-p_2}(y)\int_{\mathbb{R}^n}\frac{|x-y|^{\beta-n}|y|^{\beta}}{u^{p_1}(x)}dxdy
			=\int_{\mathbb{R}^n}v^{1-p_2}(x)dx.
		\end{aligned}
		$$
		Substituting this result into \eqref{805}, we obtain
		\eqref{105}.
	\end{proof}

\section{Proof of Theorem \ref{th10}}

		\textbf{Proof of (i).}  Let $n \geq 1$, $\gamma \in (0,n)$, $\min\{q_1,q_2\}>0$.
		
		Assuming that $(u,v) \in L_{loc}^{\infty}(\mathbb{R}^n \setminus \{0\}) \times
L_{loc}^{\infty}(\mathbb{R}^n \setminus \{0\})$ is a pair of positive super solutions of \eqref{j8},
we will deduce a contradiction.

{\it Case 1:} $0<q_1q_2<1$.
		
		For $R>0$ and $|x|>4R$, there holds
		\begin{equation}\label{j13}
			u(x) \geq c\int_{B_{2R}(0)\setminus B_R(0)}\frac{v^{q_2}(y)}{|x-y|^{n-\gamma}|y|^{\gamma}}dy
			\geq \frac{c}{|x|^{n-\gamma}}.
		\end{equation}
		Similarly, we also have
		$$
		v(x)\geq \frac{c}{|x|^n}, \quad \forall \ |x|>4R.
		$$
		Therefore, from the first equation of \eqref{j8} we obtain, for $|x|>8R$,
		\begin{equation}\label{j14}
				u(x)\geq c\int_{B_{|x|/2}(x)}\frac{dy}{|x-y|^{n-\gamma}|y|^{\gamma}|y|^{q_2n}}
				\geq \frac{c}{|x|^{\gamma+q_2n}}
				\int_{B_{|x|/2}(x)}\frac{dy}{|x-y|^{n-\gamma}}
				=\frac{c}{|x|^{q_2n}}.
		\end{equation}
		
		Set $a_0:=\min\{n-\gamma, q_2n\}$. By \eqref{j13} and \eqref{j14}, there holds
		$$
		u(x)\geq \frac{c}{|x|^{a_0}}, \quad for \ large \quad |x|.
		$$
		Inserting this estimate into the second equation of \eqref{j8}, we have
		$$
		v(x)\geq c\int_{B_{|x|/2}(x)}\frac{dy}{|x-y|^{n-\gamma}|x|^{\gamma}|y|^{q_1a_0}}
		\geq \frac{c}{|x|^{q_1a_0}}:=\frac{c}{|x|^{b_1}},
		$$
		which implies
		$$
		u(x)\geq c\int_{B_{|x|/2}(x)}\frac{dy}{|x-y|^{n-\gamma}|y|^{\gamma}|y|^{q_2b_1}}
		\geq \frac{c}{|x|^{q_2b_1}}:=\frac{c}{|x|^{a_1}}.
		$$
		By an induction argument, we obtain that
		$$
u(x)\geq \frac{c}{|x|^{a_k}}, \quad v(x)\geq \frac{c}{|x|^{b_k}}, \quad for \; large \quad |x|.
		$$
		Here $a_0=\min\{n-\gamma, q_2n\}$, $a_k=q_2b_{k}$ and $b_k=q_1a_{k-1}$ $(k=1,2,\cdots)$.
		
		When $0<q_1q_2<1$, we deduce that
		$$
		a_{j}=q_1q_2a_{j-1}=(q_1q_2)^{j}a_0 \to 0 \quad (j \to \infty).
		$$
		Therefore, $u(x) \geq c|x|^{-\epsilon}$ when $|x|$ is suitably large,
where $\epsilon \in (0,\gamma/q_1)$ is sufficiently small.
		
		When $|y|>2|x|$, there holds $|x-y|\leq 3|y|/2$. Therefore,
		$$
		v(x) \geq c\int_{\mathbb{R}^n \setminus B_{2|x|}(0)}\frac{dy}{|x-y|^{n-\gamma}|x|^{\gamma}|y|^{q_1\epsilon}}
        \geq c|x|^{-\gamma} \int_{2|x|}^{\infty}r^{\gamma-q_1\epsilon}\frac{dr}{r}
		= \infty.
		$$
		It is impossible by virtue of $v \in L_{loc}^{\infty}(\mathbb{R}^n \setminus \{0\})$.

{\it Case 2:} $0 < q_1 \leq \gamma/(n-\gamma)$.

In view of \eqref{j13}, we obtain that for large $|x|$,
$$
v(x) \geq \frac{c}{|x|^\gamma} \int_{\mathbb{R}^n \setminus B_{2|x|}(0)}
\frac{dy}{|x-y|^{n-\gamma}|y|^{q_1(n-\gamma)}}
\geq \frac{c}{|x|^\gamma} \int_{2|x|}^\infty
r^{n-(n-\gamma)-q_1(n-\gamma)}
\frac{dr}{r}=\infty.
$$
This contradiction implies the nonexistence result.

{\it Case 3:} $q_2 \geq \gamma/(n-\gamma)$.

Let $|x| \ll 1$. When $|y| \geq 2|x|$, there holds $|x-y| \leq 3|y|/2$. Thus,
$$
v(x) \geq c\int_{B_1(0) \setminus B_{2|x|}(0)} \frac{u^{q_1}(y) dy}{|x-y|^{n-\gamma}|x|^{\gamma}}
\geq \frac{c}{|x|^\gamma}\left(\min_{|y| \leq 1}u^{q_1}(y)\right) \int_{2|x|}^1 r^{n-(n-\gamma)} \frac{dr}{r}
\geq \frac{c}{|x|^\gamma}.
$$
Therefore, for $x_0 \neq 0$, we can find $\delta \in (0,|x_0|/4)$ such that
$$
u(x_0) \geq c\int_{B_\delta(0)}\frac{v^{q_2}(y) dy}{|x_0-y|^{n-\gamma}|y|^{\gamma}}
\geq \frac{c}{|x_0|^{n-\gamma}} \int_0^{\delta} r^{n-\gamma-q_2\gamma} \frac{dr}{r}
=\infty.
$$
It is impossible.

\vskip 5mm
		
\textbf{Proof of (ii).}		
We will prove that \eqref{j8} has positive radial single super solutions (and lower solutions).

Clearly, $q_1q_2=1$ and $\min\{q_1,q_2\}>0$ show that
$q_1 > \gamma/(n-\gamma)$ is equivalent to $0<q_2<(n-\gamma)/\gamma$.
Now, $q_1 > \gamma/(n-\gamma)$ and $0<q_2<(n-\gamma)/\gamma$ respectively lead to
$$
I_1:=\left(\frac{\gamma}{q_1},\frac{n}{q_1}\right) \cap (0,n-\gamma) \neq \emptyset
\quad and \quad
I_2:=\left(0,\frac{n-\gamma}{q_2}\right) \cap (\gamma,n) \neq \emptyset.
$$
Therefore, we can choose 		
\begin{equation}\label{j16}
			\sigma_1 \in I_1
		\end{equation}
		and
		\begin{equation}\label{j23}
			\sigma_2 \in I_2
		\end{equation}		
satisfying
\begin{equation}\label{sq}
\sigma_1=q_2\sigma_2, \quad \sigma_2=q_1\sigma_1
\end{equation}
because of $q_1q_2=1$. Set
		\begin{equation}\label{j22}
			u(x)=|x|^{-\sigma_1},\quad
			v(x)=|x|^{-\sigma_2}.
		\end{equation}
		
We call $f(x)$ a double bounded function,
if there exists $C>0$ such that $C^{-1} \leq f(x) \leq C$ for all $x \in \mathbb{R}^n$.
	
		\textit{Step 1.} $|x|^{\gamma}v(x)[|x|^{n-\gamma}\ast u^{q_1}(x)]^{-1}$ is double bounded.

		We will consider three cases: $|x|\gg 1$, $|x|\ll 1$, and $|x|$ is double bounded.
		
		\textit{Case 1.1:} $|x|\gg 1$.
		
		Insert \eqref{j22} with \eqref{j16} into the right hand side of the second equation of \eqref{j8}.
For suitably small $\delta \in (0,1)$, we have
		\begin{equation}\label{j15}
			\begin{aligned}
				\int_{\mathbb{R}^n}&\frac{u^{q_1}(y)dy}{|x-y|^{n-\gamma}|x|^{\gamma}}
				= \int_{B_{\delta}(0)}\frac{u^{q_1}(y)dy}{|x-y|^{n-\gamma}|x|^{\gamma}}
				+\int_{B_{|x|/2}(x)}\frac{u^{q_1}(y)dy}{|x-y|^{n-\gamma}|x|^{\gamma}}\\
				&+ \int_{(B_{2|x|}(0)\setminus B_{\delta}(0))\setminus B_{|x|/2}(x)}
\frac{u^{q_1}(y)dy}{|x-y|^{n-\gamma}|x|^{\gamma}}
				+\int_{\mathbb{R}^n\setminus B_{2|x|}(0)}\frac{u^{q_1}(y)dy}{|x-y|^{n-\gamma}|x|^{\gamma}}
				:=  \sum_{i=1}^4E_i.
			\end{aligned}
		\end{equation}
		
		When $y\in B_{\delta}(0)$, there holds $|x|/2\leq |x-y| \leq 2|x|$.
By \eqref{j16}, there exists a positive constant $C>1$ such that
		\begin{equation}\label{j17}
		E_1
		\leq \frac{C}{|x|^n}\int_{B_{\delta}(0)}\frac{dy}{|y|^{q_1\sigma_1}} \leq C|x|^{-n}.
		\end{equation}

		When $y\in B_{|x|/2}(x)$, there holds $|x|/2\leq |y|\leq 3|x|/2$. Therefore, we can find
$C>1$ such that
		\begin{equation}\label{j18}
			E_2 \leq \frac{C}{|x|^{\gamma+q_1\sigma_1}}\int_{B_{|x|/2}(x)}\frac{dy}{|x-y|^{n-\gamma}}
			\leq \frac{C}{|x|^{\gamma+q_1\sigma_1}}\int_0^{|x|/2}r^{\gamma-1}dr
			\leq \frac{C}{|x|^{q_1\sigma_1}}.
		\end{equation}
		
		When $y\in (B_{2|x|}(0)\setminus B_{\delta}(0))\setminus B_{|x|/2}(x)$,
		there holds $|x|/2\leq |x-y|\leq 3|x|$. Thus, there exists a positive constant $C>1$ such that
		\begin{equation}\label{j19}
			E_3 \leq \frac{C}{|x|^n}\int_{\delta}^{2|x|}r^{n-q_1\sigma_1}\frac{dr}{r}
			\leq \frac{C}{|x|^{q_1\sigma_1}}.
		\end{equation}
		
		When $y\in \mathbb{R}^n\setminus B_{2|x|}(0)$, there holds $|y|/2\leq |x-y|\leq 3|y|/2$.
		In view of \eqref{j16}, we can find $C>1$ such that
		\begin{equation}\label{j20}
			\frac{1}{C|x|^{q_1\sigma_1}} \leq
\frac{1}{C|x|^{\gamma}}\int_{2|x|}^{\infty}r^{\gamma-q_1\sigma_1}\frac{dr}{r} \leq E_4 \leq \frac{C}{|x|^{\gamma}}\int_{2|x|}^{\infty}r^{\gamma-q_1\sigma_1}\frac{dr}{r} \leq \frac{C}{|x|^{q_1\sigma_1}}.
		\end{equation}

		Obviously, \eqref{j16} implies $q_1\sigma_1<n$. Inserting \eqref{j17}-\eqref{j20} into \eqref{j15},
and noting $E_i \geq 0$ for $i=1,2,3$, we can find a positive constant $C>1$ such that
\begin{equation}\label{j21}		
		\frac{1}{C|x|^{q_1\sigma_1}}
		\leq \int_{\mathbb{R}^n}\frac{u^{q_1}(y)dy}{|x-y|^{n-\gamma}|x|^{\gamma}}
		\leq \frac{C}{|x|^{q_1\sigma_1}}, \quad (|x|\gg 1).
\end{equation}

		\textit{Case 1.2:} $|x|\ll 1$.
		
		Similar to \eqref{j15}, for suitably small $\delta \in (0,1)$, we have
		$$
		\begin{aligned}
			\int_{\mathbb{R}^n}&\frac{u^{q_1}(y)dy}{|x-y|^{n-\gamma}|x|^{\gamma}}
			= \int_{B_{|x|/2}(x)}\frac{u^{q_1}(y)dy}{|x-y|^{n-\gamma}|x|^{\gamma}}
			+\int_{B_{2|x|}(0)\setminus B_{|x|/2}(x)}
			\frac{u^{q_1}(y)dy}{|x-y|^{n-\gamma}|x|^{\gamma}}\\
			&+ \int_{B_{\delta}(0)\setminus B_{2|x|}(0)}
			\frac{u^{q_1}(y)dy}{|x-y|^{n-\gamma}|x|^{\gamma}}
			+\int_{\mathbb{R}^n\setminus B_{\delta}(0)}
			\frac{u^{q_1}(y)dy}{|x-y|^{n-\gamma}|x|^{\gamma}}
			:=  \sum_{i=1}^4F_i.
		\end{aligned}
		$$
		
		When $y\in B_{|x|/2}(x)$, there holds $|x|/2\leq |y| \leq 3|x|/2$.
		Therefore, it follows that
		$$
		\frac{1}{C|x|^{\gamma+q_1\sigma_1}}\int_{B_{|x|/2}(x)}\frac{dy}{|x-y|^{n-\gamma}}
\leq F_1 \leq \frac{C}{|x|^{\gamma+q_1\sigma_1}}\int_{B_{|x|/2}(x)}\frac{dy}{|x-y|^{n-\gamma}}.
$$
Clearly,
$$
\frac{1}{C} \int_0^{|x|/2} r^\gamma \frac{dr}{r} \leq \int_{B_{|x|/2}(x)}\frac{dy}{|x-y|^{n-\gamma}}
 \leq C\int_0^{|x|/2} r^\gamma \frac{dr}{r}.
$$
Therefore,
$$
\frac{1}{C|x|^{q_1\sigma_1}}	\leq F_1	
\leq \frac{C}{|x|^{q_1\sigma_1}}.
		$$
		
		When $y\in B_{2|x|}(0)\setminus B_{|x|/2}(x)$, there holds $|x|/2\leq |x-y| \leq 3|x|$.
		In view of \eqref{j16}, we have
		$$
		F_2 \leq \frac{C}{|x|^n}\int_{0}^{2|x|}r^{n-q_1\sigma_1}\frac{dr}{r}
		\leq \frac{C}{|x|^{q_1\sigma_1}}.
		$$
		
		When $y\in B_{\delta}(0)\setminus B_{2|x|}(0)$, there holds $|y|/2\leq |x-y| \leq 3|y|/2$.
		Then, it follows
		$$
		F_3 \leq \frac{C}{|x|^{\gamma}}\int_{2|x|}^{\delta}r^{\gamma-q_1\sigma_1}\frac{dr}{r}
		\leq \frac{C}{|x|^{q_1\sigma_1}}.
		$$
		
		When $y\in \mathbb{R}^n\setminus B_{\delta}(0)$, there holds $|y|/2\leq |x-y| \leq 2|y|$.
		In view of $\gamma<q_1\sigma_1$ which is implied by \eqref{j16}, we have
		$$
		F_4 \leq \frac{C}{|x|^{\gamma}}\int_{\delta}^{\infty}r^{\gamma-q_1\sigma_1}\frac{dr}{r}
		\leq \frac{C}{|x|^{\gamma}}.
		$$

		Clearly, $F_i \geq 0$ for $i=2,3,4$. Combining the estimates of $F_i$ $(i=1,2,3,4)$, we know
that
		\begin{equation}\label{j24}
			\frac{1}{C|x|^{q_1\sigma_1}} \leq \int_{\mathbb{R}^n}\frac{u^{q_1}(y)dy}{|x-y|^{n-\gamma}|x|^{\gamma}}
			\leq \frac{C}{|x|^{q_1\sigma_1}}, \quad (|x|\ll 1).
		\end{equation}

		\textit{Case 1.3:} $|x|$ is double bounded.
		
		In view of \eqref{j16}, we know that $0 \leq E_i \leq C$ $(i=1,2,3)$, and $E_4$ is double bounded.
Therefore, when $|x|$ is double bounded,
$$
\frac{1}{C|x|^{q_1\sigma_1}} \leq \int_{\mathbb{R}^n}\frac{u^{q_1}(y)dy}{|x-y|^{n-\gamma}|x|^{\gamma}}
			\leq \frac{C}{|x|^{q_1\sigma_1}}.
$$

		Combining this result with \eqref{j21} and \eqref{j24},
and using \eqref{j22} and \eqref{sq}, we can find $C>1$ such that
		\begin{equation}\label{j25}
			C^{-1}v(x) \leq \int_{\mathbb{R}^n}\frac{u^{q_1}(y)dy}{|x-y|^{n-\gamma}|x|^{\gamma}}
\leq Cv(x)
		\end{equation}
holds for all $x\in \mathbb{R}^n\setminus \{0\}$.	

\vskip 3mm
	
		\textit{Step 2.} $u(x)[|x|^{n-\alpha/q'}\ast (|x|^{\alpha/q'}u^{q_1}(x))]^{-1}$ is double bounded.
		
		\textit{Case 2.1:} $|x|\gg 1$.
		Insert \eqref{j22} into the right hand
side of the first equation of \eqref{j8}. For suitably small $\delta\in (0,1)$, we have
		$$
		\begin{aligned}
\int_{\mathbb{R}^n}&\frac{v^{q_2}(y)dy}{|x-y|^{n-\gamma}|y|^{\gamma}}
			=  \int_{B_{\delta}(0)}\frac{v^{q_2}(y)dy}{|x-y|^{n-\gamma}|y|^{\gamma}}
			+\int_{B_{|x|/2}(x)}\frac{v^{q_2}(y)dy}{|x-y|^{n-\gamma}|y|^{\gamma}}\\
			& + \int_{(B_{2|x|}(0)\setminus B_{\delta}(0))\setminus B_{|x|/2}(x)}\frac{v^{q_2}(y)dy}{|x-y|^{n-\gamma}|y|^{\gamma}}
			+\int_{\mathbb{R}^n\setminus B_{2|x|}(0)}\frac{v^{q_2}(y)dy}{|x-y|^{n-\gamma}|y|^{\gamma}}
			:=  \sum_{i=1}^4G_i.
		\end{aligned}
		$$
		By the same argument of Case 1.1, using \eqref{j23} we get
		$$
		G_1\leq \frac{C}{|x|^{n-\gamma}}\int_{0}^{\delta}r^{n-\gamma-q_2\sigma_2}\frac{dr}{r}
		\leq \frac{C}{|x|^{n-\gamma}},
		\quad
		G_2 \leq \frac{C}{|x|^{\gamma+q_2\sigma_2}}\int_{0}^{\delta}r^{\gamma}\frac{dr}{r}
		\leq \frac{C}{|x|^{q_2\sigma_2}},
		$$
		$$
		G_3
		\leq \frac{C}{|x|^{n-\gamma}}\int_{\delta}^{2|x|}r^{n-\gamma-q_2\sigma_2}\frac{dr}{r}
		\leq \frac{C}{|x|^{q_2\sigma_2}},
		$$
		and
		$$
		\frac{1}{C|x|^{q_2\sigma_2}} \leq \frac{1}{C}\int_{2|x|}^\infty r^{-q_2\sigma_2}\frac{dr}{r}
\leq G_4 \leq C\int_{2|x|}^\infty r^{-q_2\sigma_2}\frac{dr}{r}
\leq \frac{C}{|x|^{q_2\sigma_2}}.
		$$
		Clearly, $G_i \geq 0$ $(i=1,2,3)$. In view of $q_2\sigma_2 \leq n-\gamma$ (implied by \eqref{j23}),
we obtain from the estimates of $G_i$ above that
\begin{equation}\label{j27}
\frac{1}{C|x|^{q_2\sigma_2}} \leq \sum_{i=1}^4 G_i \leq \frac{C}{|x|^{q_2\sigma_2}}, \quad (|x|\gg 1).
\end{equation}

		\textit{Case 2.2:} $|x|\ll 1$.
		
		Now, we have
		$$
		\begin{aligned}
			\int_{\mathbb{R}^n}\frac{v^{q_2}(y)dy}{|x-y|^{n-\gamma}|y|^{\gamma}}
			= & \int_{B_{|x|/2}(x)}\frac{v^{q_2}(y)dy}{|x-y|^{n-\gamma}|y|^{\gamma}}
			+\int_{B_{2|x|}(0)\setminus B_{|x|/2}(x)}
			\frac{v^{q_2}(y)dy}{|x-y|^{n-\gamma}|y|^{\gamma}}\\
			&+ \int_{\mathbb{R}^n\setminus  B_{2|x|}(0)}
			\frac{v^{q_2}(y)dy}{|x-y|^{n-\gamma}|y|^{\gamma}}
=  \sum_{i=1}^3 H_i.
		\end{aligned}
		$$
		By the same argument of Case 1.2, using \eqref{j23} we get
		$$
		\frac{1}{C|x|^{q_2\sigma_2}} \leq \frac{1}{C|x|^{\gamma+q_2\sigma_2}}\int_{0}^{|x|/2}r^{\gamma}\frac{dr}{r}
\leq H_1 \leq \frac{C}{|x|^{\gamma+q_2\sigma_2}}\int_{0}^{|x|/2}r^{\gamma}\frac{dr}{r}
		\leq \frac{C}{|x|^{q_2\sigma_2}},
		$$
		$$
		H_2 \leq \frac{C}{|x|^{n-\gamma}}\int_{0}^{2|x|}r^{n-\gamma-q_2\sigma_2}\frac{dr}{r}
		\leq \frac{C}{|x|^{q_2\sigma_2}},
		\quad
		H_3 \leq C\int_{2|x|}^{\infty}r^{-q_2\sigma_2}\frac{dr}{r}
		\leq \frac{C}{|x|^{q_2\sigma_2}}.
		$$

		Combining the estimates of $H_i$ $(i=1,2,3)$, we have
		\begin{equation}\label{j28}
			\frac{1}{C|x|^{q_2\sigma_2}} \leq \int_{\mathbb{R}^n}\frac{v^{q_2}(y)dy}{|x-y|^{n-\gamma}|y|^{\gamma}}
			\leq \frac{C}{|x|^{q_2\sigma_2}}, \quad (|x|\ll 1).
		\end{equation}
		
		\textit{Case 2.3:} $|x|$ is double bounded.
		
		Now, by the analogous argument of Case 1.3, we easily get
		\begin{equation*}
			\frac{1}{C|x|^{q_2\sigma_2}} \leq \int_{\mathbb{R}^n}\frac{v^{q_2}(y)dy}{|x-y|^{n-\gamma}|y|^{\gamma}}
\leq \frac{C}{|x|^{q_2\sigma_2}}.
		\end{equation*}
		
		Combining this result with \eqref{j27} and \eqref{j28}, and using \eqref{j22} and \eqref{sq},
we can see
$$
			C^{-1}u(x) \leq \int_{\mathbb{R}^n}\frac{v^{q_2}(y)dy}{|x-y|^{n-\gamma}|y|^{\gamma}} \leq Cu(x)
$$
for all $x\in \mathbb{R}^n\setminus \{0\}.$
		
		From this result and \eqref{j25}, we find a
super solution and a lower solution as in \eqref{j22}.

\section{Proof of Theorem \ref{th1.6}}

Write $U(r)=U(|x|)=u(x)$ and $V(r)=V(|x|)=v(x)$. Assume that $(U,V)$ is
a pair of decreasing super solutions. We will deduce a contradiction.

Let $|x| \ll 1$. For $y \in B_{|x|}(0)$, we have $|x-y| \leq 2|x|$. Therefore,
$$
U(|x|) \geq c\int_{B_{|x|}(0)} \frac{V^{q_2}(|y|)dy}{|x-y|^{n-\gamma}|y|^\gamma}
\geq c V^{q_2}(|x|) |x|^{\gamma-n} \int_0^{|x|} r^{n-\gamma}\frac{dr}{r} \geq c_1 V^{q_2}(|x|),
$$
Therefore,
$$\begin{aligned}
V(|x|) \geq & c\int_{B_{|x|}(0)} \frac{U^{q_1}(|y|)dy}{|x-y|^{n-\gamma}|x|^\gamma}
\geq c U^{q_1}(|x|) |x|^{-(n-\gamma)-\gamma} \int_{B_{|x|}(0)}dy\\
\geq & c_2 U^{q_1}(|x|) \geq c_1^{q_1}c_2 V^{q_1q_2}(|x|).
\end{aligned}
$$
Here positive constants $c_1$ and $c_2$ are independent of $|x|$.
In view of $q_1q_2>1$, we have
\begin{equation}\label{rad}
V(|x|) \leq (c_1^{-q_1}c_2^{-1})^{\frac{1}{q_1q_2-1}}, \quad |x| \ll 1.
\end{equation}

On the other hand, for $|x| \leq 1/2$,
\begin{equation}\label{Ue}
U(|x|) \geq c V^{q_2}(2)
\int_{B_2(0) \setminus B_1(0)} \frac{dy}{|x-y|^{n-\gamma}|y|^\gamma}
\geq c.
\end{equation}
Let $|x| \ll 1$. When $|y| \geq 2|x|$, $|x-y| \leq 3|y|/2$. Therefore,
by \eqref{Ue} we get
$$
V(|x|) \geq \frac{c}{|x|^\gamma} \int_{B_{1/2}(0) \setminus B_{2|x|}(0)}
\frac{dy}{|x-y|^{n-\gamma}}
\geq \frac{c}{|x|^\gamma},
$$
where $c>0$ is independent of $|x|$. This contradicts with \eqref{rad}.
	
\section{Proof of Theorem \ref{th1.7}}

(i) Suppose $(u,v) \in [L^{q_1+1}(\mathbb{R}^n)
	\cap L^{s}(\mathbb{R}^n)] \times L^{q_2+1}(\mathbb{R}^n)$
	for some $s>nq_1/(\gamma-1)$. We verify $q_1q_2=1$.
	
	{\it Step 1. We claim that the improper integral
	\begin{equation}\label{jia}
		\lim_{d \to 0}\int_{\mathbb{R}^{n} \setminus B_d(0)}
		\frac{z\cdot\nabla u^{q_1}(z)}{|x-z|^{n-\gamma}}dz<\infty
	\end{equation}
	at each $x \in \mathbb{R}^n \setminus \{0\}$.}
	
	In fact, by the same derivation of \eqref{800},
using $u \in L^{q_1+1}(\mathbb{R}^n)$, we can find $R=R_j \to \infty$
	and $d=d_j \to 0$ such that
	\begin{equation}\label{yi}
		R\int_{\partial B_{R}(0)}u^{q_1+1}(z)ds +
		d\int_{\partial B_{d}(0)}u^{q_1+1}(z)ds \to 0.
	\end{equation}
	By the H\"older inequality, we obtain that for
	sufficiently large $R$,
	$$\begin{aligned}
	R \int_{\partial B_R(0)} \frac{u^{q_1}(z)ds}{|x-z|^{n-\gamma}}
	&\leq CR^{1-n+\gamma-\frac{q_1}{q_1+1}} \left(R\int_{\partial B_{R}(0)}u^{q_1+1}(z)ds\right)^{\frac{q_1}{q_1+1}}
	|\partial B_R(0)|^{\frac{1}{q_1+1}}\\
&\leq CR^{1-n+\gamma-\frac{q_1}{q_1+1}+\frac{n-1}{q_1+1}}
		\left(R\int_{\partial B_{R}(0)}u^{q_1+1}(z)ds\right)^{\frac{q_1}{q_1+1}}.
\end{aligned}
	$$
Letting $R=R_j \to \infty$ and using \eqref{yi}, we get
	\begin{equation}\label{bing}
		R \int_{\partial B_R(0)} \frac{u^{q_1}(z)ds}{|x-z|^{n-\gamma}}		
		\to 0.
	\end{equation}
Here we use
	\begin{equation}\label{Nec}
		q_1 > \frac{\gamma}{n-\gamma}
	\end{equation}
	which is implied by Theorem \ref{th10} (i).
	
	Similarly, when $d=d_j \to 0$,
	\begin{equation}\label{bing2}
		d \int_{\partial B_d(0)} \frac{u^{q_1}(z)ds}{|x-z|^{n-\gamma}}
		\leq Cd^{1-\frac{q_1}{q_1+1}} \left(d\int_{\partial B_{d}(0)}u^{q_1+1}(z)ds\right)^{\frac{q_1}{q_1+1}}
		|\partial B_d(0)|^{\frac{1}{q_1+1}} \to 0.
	\end{equation}
	
	Next, for $\Omega \subset \mathbb{R}^n$, write
	$$
	Q(\Omega):=\int_{\Omega} \frac{u^{q_1}(z)(x-z)\cdot z}
	{|x-z|^{n-\gamma+2}}dz.
	$$
	We claim that the improper integral $|Q(\mathbb{R}^n)|<\infty$
	for each $x \in \mathbb{R}^n \setminus \{0\}$.
	
	In fact, we observe that the defects of $Q(\mathbb{R}^n)$ may happen at $x$ and $\infty$.
	When $z$ is near $\infty$, by \eqref{Nec}, we can find some large $r>\max\{1,2|x|\}$ such that
	$$
	|Q(\mathbb{R}^n\setminus B_r(0))|
	\leq C\left(\int_{\mathbb{R}^n}u^{q_1+1}(z)dz\right)^{\frac{q_1}{q_1+1}}
	\left(\int_r^\infty \rho^{n-(q_1+1)(n-\gamma)} \frac{d\rho}{\rho}\right)^{\frac{1}{q_1+1}}
	<\infty.
	$$
	When $z$ is near $x$, we can find some small $\delta \in (0,1)$ such that
	$$
	|Q(B_\delta(x))|
	\leq C\left(\int_{\mathbb{R}^n}u^s(z)dz\right)^{\frac{q_1}{s}}
	\left(\int_0^\delta \rho^{n-\frac{s}{s-q_1}(n-\gamma+1)}
	\frac{d\rho}{\rho}\right)^{1-\frac{q_1}{s}}<\infty.
	$$
	Here we use the assumption of $u \in L^s(\mathbb{R}^n)$ for some $s>nq_1/(\gamma-1)$.
	Combining two results above, we prove $|Q(\mathbb{R}^n)|<\infty$.
	
	Now, we prove that (\ref{jia}). Integrating by parts yields
	\begin{equation}\label{geng}
		\begin{aligned}
			\int_{B_R(0) \setminus B_d(0)}&\frac{z\cdot\nabla u^{q_1}(z)dz}{|x-z|^{n-\gamma}}
			= R\int_{\partial B_R(0)} \frac{u^{q_1}(z)ds}{|x-z|^{n-\gamma}}
			-d\int_{\partial B_d(0)} \frac{u^{q_1}(z)ds}{|x-z|^{n-\gamma}}\\
			&  -n\int_{B_R(0) \setminus B_d(0)}\frac{u^{q_1}(z)dz}{|x-z|^{n-\gamma}}
			-(n-\gamma)\int_{B_R(0) \setminus B_d(0)} \frac{u^{q_1}(z)(x-z) \cdot z}{|x-z|^{n-\gamma+2}}dz.
		\end{aligned}
	\end{equation}
	Letting $R=R_j \to \infty$ and $d=d_j \to 0$ in (\ref{geng}),
	and using (\ref{bing}) and \eqref{bing2}, we can see that
	$$
	\lim_{d \to 0}\int_{\mathbb{R}^n \setminus B_d(0)}\frac{z\cdot\nabla u^{q_1}(z)dz}{|x-z|^{n-\gamma}}
	=-nv(x)|x|^\gamma+(\gamma-n)Q(\mathbb{R}^n),
	$$
	and hence \eqref{jia} holds at each $x \in \mathbb{R}^n \setminus \{0\}$.
	
	{\it Step 2.} For $x \neq 0$ and $\mu>0$, we have
	$$
	v(\mu x)=\int_{\mathbb{R}^n}\frac{u^{q_1}(y)dy}{|\mu
		x-y|^{n-\gamma}|\mu x|^{\gamma}}
	=\int_{\mathbb{R}^n}\frac{u^{q_1}(\mu
		z)dz}{|x-z|^{n-\gamma}|x|^{\gamma}}.
	$$
	Differentiating with respect to $\mu$ and then letting $\mu=1$, we get
	\begin{equation} \label{poho}
		x\cdot\nabla v(x)=\lim_{d \to 0}\int_{\mathbb{R}^n\setminus B_d(0)}\frac{z\cdot\nabla u^{q_1}(z)dz}
		{|x-z|^{n-\gamma}|x|^{\gamma}},
	\end{equation}
	which makes sense by virtue of \eqref{jia}.
	Multiplying \eqref{poho} by $v^{q_2}$ and integrating on $R^n \setminus B_d(0)$,
	we have
	\begin{equation} \label{shuix}
		\lim_{d \to 0} \int_{\mathbb{R}^n \setminus B_d(0)}v^{q_2}(x)(x\cdot\nabla v(x))dx
		=\lim_{d \to 0}\int_{\mathbb{R}^n\setminus B_d(0)}v^{q_2}(x)
		\int_{\mathbb{R}^n\setminus B_d(0)}\frac{z\cdot\nabla u^{q_1}(z)dz}
		{|x-z|^{n-\gamma}|x|^{\gamma}}dx.
	\end{equation}
	Integrating by parts, we see the left hand side of \eqref{shuix}
	$$\begin{aligned}
		K_1&:=\lim_{d \to 0} \int_{\mathbb{R}^n \setminus B_d(0)}v^{q_2}(x)(x\cdot\nabla v(x))dx
		=\lim_{d \to 0} \frac{1}{q_2+1} \int_{\mathbb{R}^n \setminus B_d(0)}(x\cdot\nabla v^{q_2+1}(x))dx\\
		&=\lim_{r \to \infty} \frac{r}{q_2+1}\int_{\partial B_r(0)}v^{q_2+1}(x)ds
		-\lim_{d \to 0} \frac{d}{q_2+1}\int_{\partial B_d(0)}v^{q_2+1}(x)ds
		-\frac{n}{q_2+1}\int_{\mathbb{R}^n}v^{q_2+1}(x)dx.
	\end{aligned}
	$$
	In view of $v \in L^{q_2+1}(\mathbb{R}^n)$, we can find $r=r_j \to \infty$
	and $d=d_j \to 0$ such that
	\begin{equation}\label{yi2}
		\lim_{r \to \infty} r\int_{\partial B_r(0)}v^{q_2+1}(x)ds
		=\lim_{d \to 0} d\int_{\partial B_d(0)}v^{q_2+1}(x)ds=0,
	\end{equation}
	and hence
	$$
	K_1=-\frac{n}{q_2+1}\int_{\mathbb{R}^n}v^{q_2+1}(x)dx.
	$$
	Using the Fubini theorem, we have
	$$\begin{aligned}
		K_2&:=\lim_{d \to 0}\int_{\mathbb{R}^n\setminus B_d(0)}v^{q_2}(x)
		\int_{\mathbb{R}^n\setminus B_d(0)}\frac{z\cdot\nabla u^{q_1}(z)dz}
		{|x-z|^{n-\gamma}|x|^{\gamma}}dx\\
		&=\lim_{d \to 0}\int_{\mathbb{R}^n\setminus B_d(0)}z\cdot\nabla u^{q_1}(z)
		\int_{\mathbb{R}^n\setminus B_d(0)}\frac{v^{q_2}(x)dx}
		{|x|^{\gamma}|z-x|^{n-\gamma}} dz\\
		&=\lim_{d \to 0}\int_{\mathbb{R}^n\setminus B_d(0)}(z\cdot\nabla u^{q_1}(z))u(z)dz.
	\end{aligned}
	$$
	Similar to the calculation of $K_1$, we also obtain
	$$
	K_2=-\frac{q_1n}{q_1+1}\int_{\mathbb{R}^n}u^{q_1+1}(z)dz.
	$$
	Inserting $K_1$ and $K_2$ into (\ref{shuix}), we have
	\begin{equation}\label{101}
-\frac{q_1n}{q_1+1}\int_{\mathbb{R}^n}u^{q_1+1}(x)dx=
	-\frac{n}{q_2+1}\int_{\mathbb{R}^n}v^{q_2+1}(z)dz.
\end{equation}

	By (\ref{j8}) and the Fubini theorem, we also have
	\begin{equation}\label{zxc}
		\begin{aligned}
			&\int_{\mathbb{R}^n}u^{q_1+1}(x)dx=\int_{\mathbb{R}^n}u^{q_1}(x)u(x)dx
			=\int_{\mathbb{R}^n}u^{q_1}(x)\int_{\mathbb{R}^{n}} \frac{v^{q_2}(y)}{|x-y|^{n-\gamma}|y|^{\gamma}}dx\\
			&=\int_{\mathbb{R}^n}v^{q_2}(y)\int_{\mathbb{R}^{n}} \frac{u^{q_1}(x)dx}{|y|^{\gamma}|y-x|^{n-\gamma}}dy
			=\int_{\mathbb{R}^n}v^{q_2}(y)v(y)dy=\int_{\mathbb{R}^n}v^{q_2+1}(y)dy.
		\end{aligned}
	\end{equation}
	Combining \eqref{101} and \eqref{zxc} we obtain (\ref{serrin}). This is $q_1q_2=1$.
	
	(ii) Suppose $(u,v) \in L^{q_1+1}(\mathbb{R}^n)
	\times [L^{q_2+1}(\mathbb{R}^n) \cap L^{s}(\mathbb{R}^n)]$
	for some $s>nq_2/(\gamma-1)$. We verify $q_1q_2=1$.
	
	{\it Step 1. We claim that the improper integral
	\begin{equation}\label{jia2}
		\lim_{d \to 0}\int_{\mathbb{R}^{n} \setminus B_d(0)}
		\frac{z\cdot\nabla v^{q_2}(z)}{|x-z|^{n-\gamma}|z|^\gamma}dz<\infty
	\end{equation}
	at each $x \in \mathbb{R}^n \setminus \{0\}$.}
	
	In fact,
	by the H\"older inequality and \eqref{yi2}, we obtain that
	\begin{equation}\label{bing21}
		R \int_{\partial B_R(0)} \frac{v^{q_2}(z)ds}{|x-z|^{n-\gamma}|z|^\gamma}
		\leq CR^{1-n-\frac{q_2}{q_2+1}}
		\left(R\int_{\partial B_{R}(0)}v^{q_2+1}(z)ds\right)^{\frac{q_2}{q_2+1}}
		|\partial B_R(0)|^{\frac{1}{q_2+1}} \to 0
	\end{equation}
	with $R=R_j \to \infty$. Noting
	\begin{equation}\label{Nec2}
		0<q_2 < \frac{n-\gamma}{\gamma}
	\end{equation}
	which is implied by Theorem \ref{th10} (i), and using (\ref{yi2}) we get
	\begin{equation}\label{bing22}
		d \int_{\partial B_{d}(0)} \frac{v^{q_2}(z)ds}{|x-z|^{n-\gamma}|z|^\gamma}
		\leq \frac{Cd^{1-\gamma}}{|x|^{n-\gamma}} d^{-\frac{q_2}{q_2+1}}
		\left(d\int_{\partial B_{d}(0)}v^{q_2+1}(z)ds\right)^{\frac{q_2}{q_2+1}}
		|\partial B_d(0)|^{\frac{1}{q_2+1}}
		\to 0
	\end{equation}
	with $d=d_j \to 0$.
	
	Next, for $\Omega \subset \mathbb{R}^n$, write
	$$
	L(\Omega):=\int_{\Omega} \frac{v^{q_2}(z)(x-z)\cdot z}
	{|x-z|^{n-\gamma+2}|z|^\gamma}dz.
	$$
	We first prove $|L(\mathbb{R}^n)|<\infty$
	for each $x \in \mathbb{R}^n \setminus \{0\}$.
	
	In fact, we observe that the defects of $L(\mathbb{R}^n)$ may happen at $x$, $0$ and $\infty$.
	When $z$ is near $\infty$, for some large $r>\max\{1,2|x|\}$, we have
	$$
	|L(\mathbb{R}^n\setminus B_r(0))|
	\leq C\left(\int_{\mathbb{R}^n}v^{q_2+1}(z)dz\right)^{\frac{q_2}{q_2+1}}
	\left(\int_r^\infty \rho^{n-n(q_2+1)} \frac{d\rho}{\rho}\right)^{\frac{1}{q_2+1}}
	<\infty.
	$$
	When $z$ is near $0$, for some small $\delta \in (0,\min\{1,|x|/2\})$, by \eqref{Nec2} we have
	$$
	|L(B_\delta(0))|
	\leq C\left(\int_{\mathbb{R}^n}v^{q_2+1}(z)dz\right)^{\frac{q_2}{q_2+1}}
	\left(\int_0^\delta \rho^{n-(\gamma-1)(q_2+1)} \frac{d\rho}{\rho}\right)^{\frac{1}{q_2+1}}
	<\infty.
	$$
	When $z$ is near $x$, we can find some small $\delta \in (0,1)$ such that
	$$
	|L(B_\delta(x))|
	\leq C\left(\int_{\mathbb{R}^n}v^s(z)dz\right)^{\frac{q_2}{s}}
	\left(\int_0^\delta \rho^{n-\frac{s}{s-q_2}(n-\gamma+1)}
	\frac{d\rho}{\rho}\right)^{1-\frac{q_2}{s}}<\infty
	$$
	by virtue of $v \in L^s(\mathbb{R}^n)$ for some $s>nq_2/(\gamma-1)$.
	Combining two results above, we prove $|L(\mathbb{R}^n)|<\infty$.
	
	Finally we prove that (\ref{jia2}). Integrating by parts yields
	$$\begin{aligned}
		\int_{B_R(0) \setminus B_d(0)}
		&\frac{z\cdot\nabla v^{q_2}(z)dz}{|x-z|^{n-\gamma}|z|^\gamma}
		= R\int_{\partial B_R(0)} \frac{v^{q_2}(z)ds}{|x-z|^{n-\gamma}|z|^\gamma}
		-d\int_{\partial B_d(0)} \frac{v^{q_2}(z)ds}{|x-z|^{n-\gamma}|z|^\gamma}\\
		&  -(n-\gamma)\int_{B_R(0) \setminus B_d(0)}\frac{v^{q_2}(z)dz}{|x-z|^{n-\gamma}|z|^\gamma}
		-(n-\gamma)\int_{B_R(0) \setminus B_d(0)} \frac{v^{q_2}(z)(x-z) \cdot z}{|x-z|^{n-\gamma+2}|z|^\gamma}dz.
	\end{aligned}
	$$
	Letting $R=R_j \to \infty$ and $d=d_j \to 0$,
	and using (\ref{bing21}) and \eqref{bing22},  we obtain
	$$
	\lim_{d \to 0}\int_{\mathbb{R}^n \setminus B_d(0)}\frac{z\cdot\nabla v^{q_2}(z)dz}{|x-z|^{n-\gamma}|z|^\gamma}
	=(\gamma-n)u(x)+(\gamma-n)L(\mathbb{R}^n),
	$$
	and hence \eqref{jia2} holds at each $x \in \mathbb{R}^n \setminus \{0\}$.
	
	{\it Step 2.} Similar to the derivation of \eqref{poho}, for $x \neq 0$ we get
	$$
	x\cdot\nabla u(x)=\lim_{d \to 0}\int_{\mathbb{R}^n\setminus B_d(0)}\frac{z\cdot\nabla v^{q_2}(z)dz}
	{|x-z|^{n-\gamma}|z|^{\gamma}}.
	$$
	Multiplying this result by $u^{q_1}$ and integrating on $R^n \setminus B_d(0)$,
	we have
	\begin{equation} \label{shuix2}
		\lim_{d \to 0} \int_{\mathbb{R}^n \setminus B_d(0)}u^{q_1}(x)(x\cdot\nabla u(x))dx
		=\lim_{d \to 0}\int_{\mathbb{R}^n\setminus B_d(0)}u^{q_1}(x)
		\int_{\mathbb{R}^n\setminus B_d(0)}\frac{z\cdot\nabla v^{q_2}(z)dz}
		{|x-z|^{n-\gamma}|z|^{\gamma}}dx.
	\end{equation}
	Calculating the same as in $K_1$ and $K_2$, and using \eqref{yi} and \eqref{yi2} we get
	$$
	K_3:=\displaystyle\lim_{d \to 0} \int_{\mathbb{R}^n \setminus B_d(0)}u^{q_1}(x)(x\cdot\nabla u(x))dx
	=-\displaystyle\frac{n}{q_1+1}\int_{\mathbb{R}^n}u^{q_1+1}(x)dx,
	$$
	$$
	K_4:=\displaystyle\lim_{d \to 0}\int_{\mathbb{R}^n\setminus B_d(0)}u^{q_1}(x)
	\int_{\mathbb{R}^n\setminus B_d(0)}\frac{z\cdot\nabla v^{q_2}(z)dz}
	{|x-z|^{n-\gamma}|z|^{\gamma}}dx
	=-\frac{q_2n}{q_2+1}\int_{\mathbb{R}^n}v^{q_2+1}(z)dz.
	$$
	Combining two results above with (\ref{shuix2}), we have
	$$
	-\frac{n}{q_1+1}\int_{\mathbb{R}^n}u^{q_1+1}(x)dx=
	-\frac{n q_2}{q_2+1}\int_{\mathbb{R}^n}v^{q_2+1}(z)dz.
	$$
	This result, together with \eqref{zxc}, implies (\ref{serrin}). This is $q_1q_2=1$.

\paragraph{Acknowledgements.}
This work was supported by the Natural Science Foundation of Jiangsu (No. BK20241878).

	{\sc Tiantian Zhou}
	
	Institute of Mathematics,
	School of Mathematical Sciences
	
	Nanjing Normal University,
	Nanjing, 210023, China
	
	Email:zhoutiantiannj@163.com
	
	\vskip 5mm
	
	{\sc Yutian Lei}
	
	Ministry of Education Key Laboratory for NSLSCS, School of Mathematical Sciences
	
	Nanjing Normal University, Nanjing, 210023, China
	
	Email: leiyutian@njnu.edu.cn

\end{document}